\newtheorem{thm}{Theorem}[section]
\newtheorem{lem}[thm]{Lemma}
\newtheorem{cor}[thm]{Corollary}
\newtheorem{pro}[thm]{Proposition}
\newtheorem{ex}[thm]{Example}
\newtheorem{exs}[thm]{Examples}
\newtheorem{rmk}[thm]{Remark}
\newtheorem{defi}[thm]{Definition}
\numberwithin{equation}{section}
\newcommand{\be }{\begin{equation}}
\newcommand{\ee }{\end{equation}}
\newcommand{\br}[1]{   [ \cdot,    \cdot  ]   }
\newcommand{\Hom}{\mathrm{Hom}}
\newcommand{\rprod}{\dashv}  
\newcommand{\lprod}{\vdash}  
\newcommand{\mprod}{\bot}   
\newcommand{\K}{\mathbb{K}}
\newcommand {\emptycomment}[1]{}
\def\<{\langle}
\def\>{\rangle}
\def\a{\alpha}
\def\b{\beta}
\def\c{\cdot}
\def\o{\otimes}
\def\r{\pi}
\def\o{\circ}
\begin{document}

\title{\bf  Hom-associative algebras, Admissibility and Relative averaging operators}
\date{}
\author{\normalsize \bf  S. Braiek\small{$^{1}$} \footnote {   E-mail: safabraiek25@gmail.com}, T. Chtioui\small{$^{2}$} \footnote{   E-mail: chtioui.taoufik@yahoo.fr},  M. Elhamdadi\small{$^{3}$} \footnote {  E-mail: emohamed@math.usf.edu (Corresponding author)} and  S. Mabrouk\small{$^{4}$} \footnote{  E-mail: mabrouksami00@yahoo.fr}}
\date{{\small{$^{1}$ Faculty of Sciences, University of Sfax,   BP
1171, 3000 Sfax, Tunisia. \\
\small{$^{2}$ University of Gabes, Faculty of Sciences Gabes, City Riadh 6072 Zrig, Gabes, Tunisia \\
\small{$^{3}$} Department of Mathematics,
University of South Florida, Tampa, FL 33620, U.S.A.\\  \small{$^{4}$} Faculty of Sciences, University of Gafsa,   BP
2100, Gafsa, Tunisia.
 }}}}

\maketitle

\begin{abstract}
 We introduce the notion of relative averaging operators on Hom-associative algebras with a representation.  Relative averaging operators are twisted generalizations of
relative averaging operators on associative algebras. We give two characterizations of relative  averaging operators of  Hom-associative algebras via graphs and Nijenhuis operators.  A (homomorphic) relative  averaging operator of  Hom-associative algebras with respect to a given representation gives rise to  Hom-associative (tri)dialgebras. By admissibility, a Hom-Jordan (tri)dialgebra and a Hom-(tri)Leibniz algebra can be obtained from Hom-associative (tri)dialgebra.\\

\end{abstract}

\noindent\textbf{Keywords:} Hom-associative (di)(tri)algebra, Hom-Lie algebra, Hom-Jordan (di)(tri)algebra,   Hom-(tri)Leibniz algebra,    relative averaging operator. \\\\
\noindent{\textbf{MSC(2020)}: 17B61, 17B60, 17C10, 17A32.}

\tableofcontents


\section{Introduction}

Algebras where the identities defining the structure are twisted by homomorphisms
are called Hom-algebras. They have been intensively investigated in the literature recently.
The theory of Hom-algebra started from Hom-Lie algebras introduced and discussed in
\cite{Hartwig,Larson}, motivated by quasi-deformations of Lie algebras of vector fields, in particular q-deformations of Witt and Virasoro algebras. Hom-associative algebras were introduced in
\cite{MakSil}  while Hom-Jordan algebras were introduced in \cite{Makhlouf,Yau} as twisted
generalizations of Jordan algebras.

The notion of averaging operator was first implicitly considered in \cite{rey} by O. Reynolds in 1895. His description was given in terms of idempotent Reynolds operators that appeared in the turbulence theory of fluid dynamics. In 1930, Kamp\'{e} de F\'{e}riet gave the explicit description of an averaging operator in the context of turbulence theory and functional analysis \cite{kampe}.  Let $A$ be an associative algebra. A linear map $P: A \rightarrow A$ is said to be an averaging operator on the algebra $A$ if
\begin{align*}
    P(a)  P(b) = P ( P(a)  b) = P (a  P(b)), \text{ for } a, b \in A.
\end{align*}

An associative algebra $A$ equipped with a distinguished averaging operator on it is called an averaging algebra. In the last century, averaging operators and averaging algebras were mostly studied in the algebra of functions on a space \cite{birkhoff,kelly,gam-mill,mill,moy}. In the algebraic contexts, B. Brainerd \cite{brain} first studied the conditions for which an averaging operator can be realized as a generalization of the integral operator on the ring of real-valued measurable functions. In his PhD thesis, W. Cao \cite{cao} considered averaging operators from algebraic and combinatorial points of view. Among others, he described the Lie algebra and the Leibniz algebra induced from an averaging operator and constructed the free commutative averaging algebra. Further algebraic study of averaging operators on any binary operad and their relations with bisuccessors, duplicators and Rota-Baxter operators are explicitly given in \cite{bai-imrn,pei-rep,pei-rep2}. Recently, J. Pei and L. Guo \cite{pei-guo} constructed the free (associative) averaging algebra using a class of bracketed words, called averaging words (see also \cite{das, DasMandal}). It is worth mentioning that averaging operators also appeared in the context of Lie algebras with the name of embedding tensors. They have close connections with Leibniz algebras, tensor hierarchies and higher gauge theories \cite{bon,lavau,strobl,sheng-embedd}.

The notion of embedding tensors (relative averaging operators) \cite{Nicolai} can be traced back to the study of the 
gauge supergravity theory. Embedding tensors were used to construct $N =8$ supersymmetric gauge theories and
to study the Bagger-Lambert theory of multiple $M2$-branes in \cite{Berg2}. Embedding tensors were also
used to study deformations of maximal $9$-dimensional supergravity \cite{Fernandez}. Recently, this
topic has attracted much attention of the mathematical physics world \cite{Kotov,lavau}.

Our main objective is to introduce the notion of relative averaging operators on  Hom-associative algebras with a representation.  
These are twisted generalizations of
relative averaging operators on associative algebras. We give two characterizations of  averaging operators of  Hom-associative algebras via graphs and Nijenhuis operators.  A (homomorphic) relative  averaging operator of  Hom-associative algebras with respect to a given representation gives rise to  Hom-associative (tri)dialgebras and by admissibility, a Hom-Jordan (tri)dialgebra and Hom-(tri)Leibniz algebra can be obtained by Hom-associative (tri)dialgebra.

This paper is organized as follow, In Section \ref{Preliminaries}, we recall some knwon results and definitions on Hom-associative, Hom-Lie, Hom-Leibniz and Hom-Jordan algebras and their representations and Hom-associative dialgebras. Section \ref{Duplication} is devoted to  study the notion of a relative averaging operator on a Hom-associative, Hom-Lie and Hom-Jordan algebra with respect to a suitable  bimodule. We give a characterization of  a relative averaging operator by graphs and Nijenhuis operators. We investigate relative averaging operators on Hom-$\Omega$ algebras to obtain
Hom-di-$\Omega$ algebras and present some basic observations.
  In Section  \ref{triplication}, we introduce the notion of a homomorphic relative   averaging operator  on a Hom-$\Omega$ algebra which allows us the replicating of  the multiplication into three multiplications of the same nature.

All   vector spaces, linear and multilinear maps, unadorned tensor products and wedge products
are over a field $\mathbb K$ of characteristic $0$.

\section{Review  of Hom-associative algebras and some related structures}\label{Preliminaries}
In this section we recall some knwon results and definitions of Hom-associative, Hom-Lie, Hom-Leibniz and Hom-Jordan algebras and their representations and Hom-associative dialgebras. For more details, we refer to \cite{Hartwig, Larson, Makhlouf, MakSil, Yau}.

\begin{defi}
A Hom-associative algebra is a triple $(A,\cdot,\alpha)$ consisting of a vector space  $A$ togother  with a binary operation $\cdot:A \otimes A \to A$ and a linear map $\alpha: A \to A$ satisfying, for any $x,y,z\in A$,
$$ (x,y,z)_\a \equiv  (x \cdot y)\cdot \alpha(z)-\alpha(x) \cdot (y \cdot z)=0.   $$
If $\a=id$, we recover the classical associative algebras.
\end{defi}
If  the twist map $\alpha$ preserves the operation, then the Hom-associative algebra is called  multiplicative.
For this reason, throughout this paper all  Hom-algebras are considered multiplicative.
Let $(A_1,\cdot_1,\alpha_1)$ and $(A_2,\cdot_2,\alpha_2)$ be two Hom-associative algebras. A linear map $\phi:A_1\rightarrow A_2$ is a morphism of Hom-associative algebras if
$$\phi\circ \alpha_1=\alpha_2\circ \phi \ \ \ \ \text{and}\ \ \ \  \phi(x \cdot_1 y)=\phi(x) \cdot_2 \phi(y),\ \forall \ x,y\in A.  $$

\begin{defi}\label{RepHomAss}
 A bimodule (or a representation) of a Hom-associative algebra $(A,\cdot, \alpha)$ on a vector space $V$ consists of a  linear map  $\beta\in End(V)$ and two linear maps  $l, r: A\rightarrow End(V)$
such that
\begin{align}
\beta l(x)=l(\alpha(x))\beta, \quad
 \beta r(x)=& r(\alpha(x))\beta\label{rAs1},\\
 l(x \cdot y)\beta=l(\alpha(x))l(y), \quad
 r(x \cdot y)\beta=& r(\alpha(y))r(x), \quad
 l(\alpha(x))r(y)=r(\alpha(y))l(x)\label{rAs4}
\end{align}
 for any $x, y \in A.$
 A Hom-associative algebra $(A,\c,\a)$ with a representation $(V,l,r,\b)$ is a called \textsf{HAss-Rep} pair
and refer to it with the pair $(A,V)$.
\end{defi}
Let $(A,\c_A,\a_A)$ and $(B,\c_B,\a_B)$ two Hom-associative algebras. We will say that $A$  acts on $B$ if there are tow $\mathbb{K}$-linear maps
$l,r: A \to End(B)$
such that
 $(B,l,r,\a_B)$ is a bimodule of $A$, and for any $x\in A$ and $b,b^\prime\in B$ we have
 \begin{align}
&l(\a_A(x))(b\c_Bb^\prime)=(l(x)b)\c_B(\a_B(b^\prime)),\quad r(\a_A(x))(b\c_Bb^\prime)=\a_B(b)\c_B(r(x)b^\prime)),\quad\nonumber\\\label{actass}&\a_B(b)\c_B(l(x)b^\prime)=(r(x)b)\c_B(\a_B(b^\prime)).
 \end{align}
An action of $A$ on $B$ is denoted by $(B,l,r,\a_B,\c_B)$. A \textsf{HAss-Act} pair  is a Hom-associative algebra $(A,\c,\a)$ with an action $(B,l,r,\a_B,\c_B)$
and refer to it with the pair $(A,B)$.
\begin{lem}
    Note that a tuple $(B,l,r,\alpha_B,\cdot_B)$ is an action  of  a Hom-associative algebra $(A,\cdot_A,\alpha_A)$
if and only if  $A \oplus V$  carries a Hom-associative structure with product and twisting map  given by
\begin{align*}
&(x + u) \cdot_{A \oplus B}  (y + v )=  x \cdot_A  y + l(x)v + r(y )u +u \cdot_B  v, \\
&(\alpha_A+\alpha_B)(x+u)=\alpha_A(x)+\alpha_B(u)
\end{align*}
 for all  $x,y \in A$ and $ u,v \in B,$ which  is called the semi-direct product of $A$ with $B$.
\end{lem}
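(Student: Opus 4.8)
The plan is to establish both implications simultaneously from a single bilinear expansion. Write $\alpha\defbe\alpha_A+\alpha_B$ for the proposed twisting map and $\star\defbe\cdot_{A\oplus B}$ for the proposed product. Since $\star$ is bilinear, I would expand the Hom-associator
\[
\big((x+u)\star(y+v)\big)\star\alpha(z+w)\;-\;\alpha(x+u)\star\big((y+v)\star(z+w)\big)
\]
by trilinearity into $2^{3}=8$ homogeneous terms, indexed by the choice of each of the three arguments lying in $A$ or in $B$; each term is computed using only the formula defining $\star$ together with the identifications $x\star v=l(x)v$ and $u\star y=r(y)u$ for $x,y\in A$ and $u,v\in B$. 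Likewise I would expand $\alpha\big((x+u)\star(y+v)\big)-\alpha(x+u)\star\alpha(y+v)$ into its $A$-component and $B$-component.

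Next I would identify each homogeneous piece. The term with all three arguments in $A$ is exactly the Hom-associator $(x,y,z)_{\alpha_A}$ of $A$, and the term with all three in $B$ is $(u,v,w)_{\alpha_B}$; both vanish since $A$ and $B$ are Hom-associative. The three terms with exactly one argument in $B$ reproduce, respectively, the three bimodule relations of \eqref{rAs4}: the argument-in-third-slot term gives $l(x\cdot_A y)\alpha_B=l(\alpha_A(x))l(y)$, the argument-in-first-slot term gives $r(x\cdot_A y)\alpha_B=r(\alpha_A(y))r(x)$, and the argument-in-middle-slot term gives $l(\alpha_A(x))r(y)=r(\alpha_A(y))l(x)$. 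The three terms with exactly two arguments in $B$ reproduce, respectively, the three mixed-product relations \eqref{actass}. On the multiplicativity side, the $A$-component is multiplicativity of $\alpha_A$, the $B$-component is multiplicativity of $\alpha_B$, and the two cross components are the intertwining relations $\beta l(x)=l(\alpha(x))\beta$ and $\beta r(x)=r(\alpha(x))\beta$ of \eqref{rAs1} with $\beta=\alpha_B$.

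Putting this together: since $(A,\cdot_A,\alpha_A)$ and $(B,\cdot_B,\alpha_B)$ are multiplicative Hom-associative algebras by hypothesis, the two pure associator components and the multiplicativity of $\alpha_A,\alpha_B$ hold automatically, so $(A\oplus B,\star,\alpha)$ is a multiplicative Hom-associative algebra if and only if the six remaining mixed components all vanish; by the previous step this is precisely the conjunction of \eqref{rAs1}, \eqref{rAs4} and \eqref{actass}, i.e.\ precisely the assertion that $(B,l,r,\alpha_B,\cdot_B)$ is an action of $A$ on $B$. The converse direction (extracting the action from a Hom-associative structure on $A\oplus B$) is the same computation read backwards. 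I do not expect a genuine obstacle here: the argument is pure bookkeeping, and the only point demanding care is matching each asymmetric relation in \eqref{rAs4} and \eqref{actass} to the correct slot-pattern, since $l$ governs the $A$-action in the first position and $r$ in the last, so the mixed terms involving $u$ versus $v$ versus $w$ yield genuinely different identities.
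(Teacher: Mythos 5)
Your component-wise verification is correct, and it is exactly the routine direct computation that the paper leaves implicit here (the lemma is stated without proof) and carries out explicitly for the analogous hemisemi-direct product in Proposition \ref{hsdirect diass}: the eight slot-patterns of the Hom-associator recover Hom-associativity of $A$ and $B$, the relations \eqref{rAs4} and \eqref{actass}, while multiplicativity of $\alpha_A+\alpha_B$ recovers \eqref{rAs1}. No gaps; your matching of each mixed slot-pattern to the corresponding bimodule or action identity checks out.
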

In the following, we recall the notion of Hom-Jordan and Hom-Lie algebras. These two varieties are closely related to Hom-associative algebras,\emptycomment{ in the sense that any Hom-associative algebra is Hom-Jordan and Hom-Lie admissible}
(see \cite{MakSil} and \cite{Makhlouf} for more details). In fact, let $(A,\cdot,\alpha)$ be a Hom-associative algebra. Define the algebras $A^-:=(A,[\cdot,\cdot],\alpha)$ and $A^+:=(A,\o,\alpha)$ where
\begin{align*}
    & [x,y]=x\cdot y-y\cdot x\ (\text{commutator}) \\
    & x\o y=x\cdot y+y\cdot x\  (\text{anti-commutator})
\end{align*}

\begin{thm}
Under the following notations, $A^+$ is a Hom-Jordan algebra and $A^-$ is a Hom-Lie algebra. In other words,
Hom-associative algebras are  Hom-Jordan-admissible and Hom-Lie-admissible.
\end{thm}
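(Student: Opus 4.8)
The plan is to prove that $A^+ = (A, \circ, \alpha)$ is Hom-Jordan and $A^- = (A, [\cdot,\cdot], \alpha)$ is Hom-Lie by direct verification of the defining identities, pushing everything back to the single Hom-associative identity $(x,y,z)_\alpha = (x\cdot y)\cdot\alpha(z) - \alpha(x)\cdot(y\cdot z) = 0$ together with multiplicativity of $\alpha$.

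First I would handle $A^-$, since it is the easier half. The operation $[x,y] = x\cdot y - y\cdot x$ is clearly skew-symmetric, and $\alpha[x,y] = \alpha(x\cdot y) - \alpha(y\cdot x) = \alpha(x)\cdot\alpha(y) - \alpha(y)\cdot\alpha(x) = [\alpha(x),\alpha(y)]$ by multiplicativity, so $\alpha$ is an algebra morphism for the commutator bracket. For the Hom-Jacobi identity $\sum_{\circlearrowleft}[\alpha(x),[y,z]] = 0$, I would expand $[\alpha(x),[y,z]]$ into the eight associator-type terms $(\alpha(x)\cdot(y\cdot z) - \alpha(x)\cdot(z\cdot y)) - ((y\cdot z)\cdot\alpha(x) - (z\cdot y)\cdot\alpha(x))$, rewrite each using $\alpha(x)\cdot(y\cdot z) = (x\cdot y)\cdot\alpha(z)$ etc., and then sum over the three cyclic permutations; the twelve resulting terms cancel in pairs. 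This is the standard computation showing Hom-associative $\Rightarrow$ Hom-Lie-admissible.

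Next, for $A^+$ I would recall that a Hom-Jordan algebra $(A,\circ,\alpha)$ is required to be commutative and to satisfy the Hom-Jordan identity, typically stated as $(x\circ y)\circ \alpha^2(x\circ x) = \alpha^2(x)\circ(y\circ(x\circ x))$ (or the equivalent linearized multiplicative version — I would use exactly the form given in \cite{Makhlouf,Yau}). Commutativity of $\circ$ is immediate, and $\alpha$ respects $\circ$ again by multiplicativity. The substantive step is verifying the Hom-Jordan identity: I would expand both sides of $(x\circ y)\circ\alpha^2(x\circ x)$ and $\alpha^2(x)\circ(y\circ(x\circ x))$ into sums of products of the original operation $\cdot$, then repeatedly apply the Hom-associativity relation (and its consequence $(x\cdot y)\cdot\alpha^2(z\cdot w) = \alpha(x\cdot y)\cdot\alpha(z\cdot w)$ rearranged via associativity) to bring all monomials to a common normal form. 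After full expansion one gets a large alternating sum of degree-four monomials in $x,x,y$; grouping terms by their bracketing pattern and using Hom-associativity shows the two sides agree.

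The main obstacle is the bookkeeping in the $A^+$ case: unlike the Lie case, the Jordan identity is degree four and not multilinear in the chosen form, so a naive expansion produces on the order of several dozen monomials, and the cancellations are not as visibly pairwise as in the Jacobi computation. The cleanest route is to linearize the Jordan identity first, reducing to a multilinear identity in four arguments, and then show that identity follows from Hom-associativity by the same associator-shuffling used for $A^-$; alternatively one may cite the original references \cite{MakSil,Makhlouf} where these admissibility results are established, since the statement here is a recollection. I would present the Lie case in full and either linearize-and-verify or cite for the Jordan case.

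\qed
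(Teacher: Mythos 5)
The paper gives no proof of this theorem at all: it is recalled from the literature (\cite{MakSil} for Hom-Lie admissibility, \cite{Makhlouf,Yau} for the Hom-Jordan case), so your fallback of ``cite the original references'' is in fact exactly what the paper does, and your Hom-Lie computation (skew-symmetry, multiplicativity of $\alpha$, and cancellation of the twelve terms of the cyclic sum via $(x\cdot y)\cdot\alpha(z)=\alpha(x)\cdot(y\cdot z)$) is correct and standard.

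The soft spot is the Hom-Jordan half, and it is not mere bookkeeping. First, the identity you propose to verify, $(x\circ y)\circ\alpha^{2}(x\circ x)=\alpha^{2}(x)\circ(y\circ(x\circ x))$, is not the paper's Hom-Jordan identity; the paper requires $\alpha(x^{2})\circ(\alpha(y)\circ\alpha(x))=(x^{2}\circ\alpha(y))\circ\alpha^{2}(x)$, i.e.\ vanishing of the Hom-associator on $(x^{2},\alpha(y),\alpha(x))$. In the Hom setting the placement of the twists is precisely what makes or breaks admissibility: the version you wrote has unbalanced twisting and does not follow from Hom-associativity, and your hedge ``the form given in \cite{Makhlouf,Yau}'' is ambiguous because those two references define Hom-Jordan algebras with different twist placements, only the one used in this paper (Yau's) being the one for which the plus algebra of a Hom-associative algebra is known to be Hom-Jordan. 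Second, the auxiliary identity you invoke, $(x\cdot y)\cdot\alpha^{2}(z\cdot w)=\alpha(x\cdot y)\cdot\alpha(z\cdot w)$, is false in general: Hom-associativity only lets you reassociate via $(a\cdot b)\cdot\alpha(c)=\alpha(a)\cdot(b\cdot c)$, it never lets you transfer an $\alpha$ from one factor to the other. Finally, with the correct identity no linearization or ``several dozen monomials'' are needed: writing $u=x\cdot x$, each side expands into four monomials, and they match in pairs using $(a\cdot b)\cdot\alpha(c)=\alpha(a)\cdot(b\cdot c)$ together with $(x\cdot x)\cdot\alpha(x)=\alpha(x)\cdot(x\cdot x)$ (which gives $u\cdot\alpha(x)=\alpha(x)\cdot u$); e.g.\ $(u\cdot\alpha(y))\cdot\alpha^{2}(x)=\alpha(u)\cdot(\alpha(y)\cdot\alpha(x))$ and $\alpha^{2}(x)\cdot(\alpha(y)\cdot u)=(\alpha(x)\cdot\alpha(y))\cdot\alpha(u)$, etc. So the structure of your plan is fine, but as written the Jordan verification would be carried out against the wrong identity and with an invalid tool; fixing it means taking the paper's (Yau's) form of the Hom-Jordan identity and running the short four-term computation above, or citing \cite{Yau} specifically.
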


We aim to extend these relationships to the level of dialgebraic and trialgebraic settings.
\begin{defi}
  A Hom-Jordan algebra is a triple $(A, \o, \alpha)$
    consisting of a  vector space $A$ and two linear maps
    $\alpha: A  \to A$ and
    $\o: A \otimes A \to A$ such that, for any $x,y \in A$
    \begin{align}
        x \o y=& y \o x, \\
      \alpha(x^2)\o  (\alpha(y) \o \alpha(x))= &(x^2 \o \alpha(y))\o \alpha^2(x). \label{HOm-Jor id}
  \end{align}
\end{defi}

 \begin{defi}
A representation (or a module) of a Hom-Jordan algebra $(A,\o,\a)$ on a vector space $V$ with respect to $\b \in gl(V)$
is a linear map $\r: A \to gl(V)$ such that
 for any $x,y,z \in A$
\begin{align}\label{representation}
& \b  \r(x)= \r(\a(x))  \b , \\
&\r(\a^2(x))\r(y\circ z)  \b + \r(\a^2(y))\r(z\circ x)  \b  + \r(\a^2(z))\r(x\circ y)  \b  \nonumber \\
=& \r(\a(x)\circ \a(y))\r(\a(z)) \b  +  \r(\a(y)\circ \a(z))\r(\a(x)) \b +  \r(\a(z)\circ \a(x))\r(\a(y)) \b, \label{RepHomJor1}\\
& \r((x\circ y)\circ \a(z)) \b^2+ \r(\a^2(x))\r(\a(z))\r(y)+\r(\a^2(y))\r(\a(z))\r(x) \nonumber \\
 =& \r(\a(x)\circ \a(y))\r(\a(z)) \b +  \r(\a(y)\circ \a(z))\r(\a(x)) \b +  \r(\a(x)\circ \a(z))\r(\a(y)) \b.\label{RepHomJor2}
\end{align}

 \end{defi}
A Hom-Jordan algebra $(A,\o,\a)$ with a representation $(V,\pi,\b)$ is called \textsf{HJor-Rep} pair
and refer to it with the pair $(A,V)$.
 \begin{defi}
A Hom-Lie algebra is a triple $(A,[\cdot,\cdot],\alpha)$
   consisting of a  vector space $A$ and two linear maps
    $\alpha: A  \to A$ and
    $[\cdot,\cdot]: A  \wedge A \to A$ such that, for any $x,y, z \in A$
    \begin{align}
& [\alpha(x),[y,z]] + [\alpha(y),[z,x]]+ [\alpha(z),[x,y]] =0, \quad \textrm{(Hom-Jacobi identity)}.
  \end{align}
 \end{defi}
\begin{defi}
    A representation (or a module) of a Hom-Lie algebra $(A,[\cdot,\cdot],\alpha)$ on a vector space $V$, with respect to $\b \in gl(V)$, is a linear map $\rho: A \to gl(V)$, such that for all $x, y \in A$, the following
identities  are satisfied:
\begin{align}
    \rho(\a(x))\b= & \b \rho(x),\\
    \rho([x,y])\b=& \rho(\a(x))\rho(y)-\rho(\a(y))\rho(x).
\end{align}

\end{defi}
A Hom-Lie algebra $(A,[\c,\c],\a)$ with a representation $(V,\rho,\b)$ is called \textsf{HLie-Rep} pair
and refer to it with the pair $(A,V)$.
   Let $(A,[\c,\c]_A,\a_A)$ and $(B,[\c,\c]_B,\a_B)$ be two Hom-Lie algebras. We will say that $A$ acts on $B$ if there is a linear map $\rho:A\to End(B)$ such that $(B,\rho,\a_B)$ is a module of $A$, and for any $x\in A$ and $u,v\in B$ we have
    \begin{equation}\label{actLie}
        \rho(\a_A(x))[u,v]_B=[\rho(x)u,\a_B(v)]_B+[\a_B(u),\rho(x)v]_B.
    \end{equation}
    A \textsf{HLie-Act} pair is a Hom-Lie algebra $(A,[\c,\c],\a)$ with an action $(B,\rho,\a_B,[\c,\c]_B)$
and refer to it with the pair $(A,B)$.
\begin{lem}
   The  tuple $(B,\rho,\alpha_B,[\cdot,\cdot]_B)$ is an action  of  a Hom-Lie  algebra $(A,\alpha_A,[\cdot,\cdot]_A)$
if and only if  $A \oplus B$  carries a Hom-Lie structure with bracket and twisting map  given by
\begin{align*}
&[x + u,  y + v ]_{A \oplus B}=  [x ,  y  ]_{A } + \rho(x)v -\rho(y)u +[u ,   v ]_{ B}, \\
&(\alpha_A+\alpha_B)(x+u)=\alpha_A(x)+\alpha_B(u)
\end{align*}
 for all  $x,y \in A$ and $ u,v \in B,$ which  is called the semi-direct product of $A$ with $B$.
\end{lem}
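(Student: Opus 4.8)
The plan is to expand the Hom-Jacobi identity for the candidate bracket on $A\oplus B$ and sort the resulting terms according to how many of their three arguments lie in $B$. Since the bracket $[x+u,\,y+v]_{A\oplus B}=[x,y]_A+\rho(x)v-\rho(y)u+[u,v]_B$ has its $A$-component built only from $A$-arguments while its $B$-component raises the $B$-degree by at most one, the Hom-Jacobi identity on $A\oplus B$ splits into four independent blocks indexed by the number of $B$-valued arguments, and the identity holds if and only if each of these blocks vanishes. The idea is to show that these four blocks are, respectively, the Hom-Jacobi identity of $A$, the second module axiom for $\rho$, the compatibility identity \eqref{actLie}, and the Hom-Jacobi identity of $B$.

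First I would dispose of the two elementary points. Skew-symmetry of $[\cdot,\cdot]_{A\oplus B}$ is immediate: $[x,y]_A$ and $[u,v]_B$ are skew by hypothesis, and the cross term $\rho(x)v-\rho(y)u$ changes sign when $x+u$ and $y+v$ are interchanged. Multiplicativity of $\alpha_A\oplus\alpha_B$ with respect to this bracket reduces to the three equalities $\alpha_A[x,y]_A=[\alpha_A(x),\alpha_A(y)]_A$, $\alpha_B[u,v]_B=[\alpha_B(u),\alpha_B(v)]_B$, and $\alpha_B(\rho(x)v)=\rho(\alpha_A(x))\alpha_B(v)$; the first two are multiplicativity of $A$ and of $B$, and the third is the first module axiom $\rho(\alpha_A(x))\beta=\beta\rho(x)$ of $(B,\rho,\alpha_B)$, applied with $\beta=\alpha_B$.

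The heart of the argument is the Hom-Jacobi identity evaluated on $x+u$, $y+v$, $z+w\in A\oplus B$. Substituting the bracket and the twist and collecting by $B$-degree: the degree-$0$ block is $[\alpha_A(x),[y,z]_A]_A+(\text{cyclic})=0$, i.e. the Hom-Jacobi identity of $A$; the degree-$3$ block is the Hom-Jacobi identity of $B$; the degree-$1$ block, after grouping the three cyclic contributions and reading off the coefficient of a fixed $B$-argument, collapses to $\rho([x,y]_A)\alpha_B=\rho(\alpha_A(x))\rho(y)-\rho(\alpha_A(y))\rho(x)$, which is the second module axiom; and the degree-$2$ block, after grouping the three cyclic contributions and applying skew-symmetry of $[\cdot,\cdot]_B$ once for each fixed $A$-argument, becomes exactly (three relabelled copies of) the compatibility identity \eqref{actLie}. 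Therefore $A\oplus B$ is a Hom-Lie algebra with the stated bracket and twist if and only if $(B,\rho,\alpha_B)$ is a module of $A$ and \eqref{actLie} holds, i.e. if and only if $(B,\rho,\alpha_B,[\cdot,\cdot]_B)$ is an action of $A$. For the converse one reads the same bidegree decomposition backwards: restricting the multiplicativity condition of $A\oplus B$ to $B$ gives the first module axiom, and restricting its Hom-Jacobi identity to two $A$-arguments and one $B$-argument (resp. one $A$-argument and two $B$-arguments) returns the second module axiom (resp. \eqref{actLie}).

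The only step that needs care is the bookkeeping in the degree-$2$ block: the three cyclic summands produce several terms of the two shapes $\rho(\alpha_A(\cdot))[\cdot,\cdot]_B$ and $[\alpha_B(\cdot),\rho(\cdot)(\cdot)]_B$, and for each fixed $A$-argument one must use the skew-symmetry of $[\cdot,\cdot]_B$ precisely once so that the three surviving terms line up with the single left-hand side and the two right-hand side summands of \eqref{actLie}; a wrong sign or a mismatched relabeling here would break the equivalence. Everything else is a direct substitution using only the axioms recalled above, so I expect no further obstacle.
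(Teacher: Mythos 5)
Your proposal is correct: the bidegree bookkeeping works out exactly as you describe (the one-$B$-argument block of the Hom-Jacobi identity gives $\rho([x,y]_A)\alpha_B=\rho(\alpha_A(x))\rho(y)-\rho(\alpha_A(y))\rho(x)$, the two-$B$-argument block gives \eqref{actLie} after a single use of skew-symmetry of $[\cdot,\cdot]_B$, and multiplicativity of $\alpha_A+\alpha_B$ on mixed elements gives $\rho(\alpha_A(x))\alpha_B=\alpha_B\rho(x)$, consistent with the paper's standing multiplicativity assumption). The paper states this lemma without proof, treating it as routine, and your direct verification by splitting according to the number of $B$-valued arguments is precisely the argument being implicitly invoked.
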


\begin{defi}
    A Hom-Leibniz algebra is a tuple $(A,\{\cdot,\cdot\},\a)$ consisting of a vector space $A$ endowed with a two linear maps $\a: A \to A$ and $\{\c,\c\}: A \otimes A \to A$ such that for any $x,y,z \in A$
    \begin{align}
        \{\a(x),\{y,z\}\}=\{\{x,y\},\a(z)\}+\{\a(y),\{x,z\}\},
\quad \textrm{(Hom-Leibniz identity).}
\end{align}
\end{defi}

A Hom-dialgebra is a vector space $A$  with two bilinear
operations $\vdash, \dashv: A \times A \to A$, called the left and right products and a linear map $\a: A \to A$.
On the other hand,  a Hom-$0$-dialgebra is a Hom-dialgebra $(A,\vdash,\dashv,\alpha)$ satisfying the left
and right bar identities:
\begin{align}
  (x \dashv y) \vdash \a(z)=
  ( x \vdash y ) \vdash \a(z),
  \qquad
  \a(x) \dashv ( y \dashv z )=
  \a(x) \dashv ( y \vdash z ).
\end{align}
Let $(A,\vdash,\dashv,\alpha)$ be a Hom-$0$-dialgebra. Define the left, right and inner Hom-associators as follow
\begin{align*}
&(x,y,z)^\a_\dashv=(x\dashv y)\dashv \a(z)-\a(x)\dashv(y \dashv z),\\
&(x,y,z)^\a_\vdash =(x\vdash y)\vdash \a(z)-\a(x)\vdash(y \vdash z),\\
&(x,y,z)^\a_\times =(x\vdash y)\dashv \a(z)-\a(x)\vdash(y \dashv z).
\end{align*}
In addition, define, for all $x,y\in A,$ the dicommutator by
\begin{align}
  \{x,y\}=x \vdash y-y \dashv x   , \label{dicomm}
\end{align}
and the anti-dicommutator  by
\begin{align}
 x \bullet y= x \vdash y+ y \dashv x .  \label{antidicomm}
\end{align}
  \begin{defi}\label{defdiass}
     A   Hom-associative dialgebra (also called Hom-diassociative algebra)
  is a Hom-$0$-dialgebra    $(A,\vdash,\dashv,\alpha)$ satisfying
left, right and inner Hom-associativity (for all $x,y,z\in A$):
\begin{align}\label{dialgebra}
(x,y,z)^\a_\dashv=0,\quad (x,y,z)^\a_\vdash=0,\quad (x,y,z)^\a_\times =0.
\end{align}
\end{defi}
Let $(A,\lprod,\rprod,\a)$ and $(A^\prime,\lprod^\prime,\rprod^\prime,\a^\prime)$ be two Hom-associative dialgebras. A morphism of Hom-associative dialgebras from $A$ to $A^\prime$ is a linear map $\phi:A\to A^\prime$ satisfying
\begin{equation*}
    \phi\a=\a^\prime\phi,\quad \phi(x\lprod y)=\phi(x)\lprod^\prime\phi(y),\quad \phi(x\rprod y)=\phi(x)\rprod^\prime\phi(y).
\end{equation*}
\begin{exs}\label{ex} We give the following examples:
\begin{enumerate}
    \item  If $(A,\cdot,\a)$ is a Hom-associative algebra, then the formulas $x \dashv y=x \cdot y=x \vdash y$ define a structure of Hom-associative dialgebra on $A$.
    \item Let $(A,\cdot,\a)$ be a Hom-associative algebra and $d: A \to A$ be a differential (that is $d(x\cdot y)=d(x)\c y+x \c d(y)$ and $d^2=0$).
    Define left and right products on $A$ by the formulas
    $$ x \dashv y:= x\c d(y),\quad x \vdash y:= d(x)\c y.$$
    It is immediate to check that $A$ equipped with these two products is a Hom-associative dialgebra.
  \item Let $(A,\c,\a)$ be a Hom-associative algebra and let $(V,l,r,\b)$ be an $A$-bimodule. Let $f : V \to A$ be an $A$-bimodule map, that is $$f(l(x)u)=x\cdot f(u) \quad \text{and}\quad f(r(x)u)=f(u)\cdot x,\quad \forall\ x\in A, u\in V.$$  Then one can put a dialgebra structure on $V$ as follows:
  $$ u \dashv u':=  r(f(u'))u ,\quad u \vdash u':=l(f(u))u'.     $$

  \item Let $(A,\vdash,\dashv,\a)$ be a Hom-associative dialgebra. Then the operations $x\dashv' y=y \vdash x$ and $x \vdash' y= y \dashv x$ define a new Hom-associative dialgebra structure on $A$.

\end{enumerate}
\end{exs}

\begin{pro}\label{asstoLeibniz}
   If $(A,\lprod,\rprod,\alpha)$ is a Hom-associative dialgebra, then $(A,\{\cdot,\cdot\},\a)$ is a Hom-Leibniz algebra.
\end{pro}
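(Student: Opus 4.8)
The plan is to verify the Hom-Leibniz identity
\[
\{\alpha(x),\{y,z\}\}=\{\{x,y\},\alpha(z)\}+\{\alpha(y),\{x,z\}\}
\]
directly, by expanding every bracket through the dicommutator \eqref{dicomm}, $\{a,b\}=a\vdash b-b\dashv a$, and then reducing both sides to the same expression using the five defining relations of a Hom-associative dialgebra: left, right and inner Hom-associativity \eqref{dialgebra} together with the two bar identities.

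First I would expand the left-hand side. Since $\{y,z\}=y\vdash z-z\dashv y$, it equals
\[
\alpha(x)\vdash(y\vdash z)-\alpha(x)\vdash(z\dashv y)-(y\vdash z)\dashv\alpha(x)+(z\dashv y)\dashv\alpha(x).
\]
Applying left Hom-associativity to the first term, inner Hom-associativity to the second and third, and right Hom-associativity to the fourth turns this into
\[
(x\vdash y)\vdash\alpha(z)-(x\vdash z)\dashv\alpha(y)-\alpha(y)\vdash(z\dashv x)+\alpha(z)\dashv(y\dashv x).
\]

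Next I would expand the two summands on the right-hand side the same way: $\{\{x,y\},\alpha(z)\}$ yields $(x\vdash y)\vdash\alpha(z)-(y\dashv x)\vdash\alpha(z)-\alpha(z)\dashv(x\vdash y)+\alpha(z)\dashv(y\dashv x)$, while $\{\alpha(y),\{x,z\}\}$ yields $\alpha(y)\vdash(x\vdash z)-\alpha(y)\vdash(z\dashv x)-(x\vdash z)\dashv\alpha(y)+(z\dashv x)\dashv\alpha(y)$. Four of these eight terms cancel against the four terms of the reduced left-hand side, and what remains to show is
\[
-(y\dashv x)\vdash\alpha(z)+\alpha(y)\vdash(x\vdash z)-\alpha(z)\dashv(x\vdash y)+(z\dashv x)\dashv\alpha(y)=0.
\]
Here the left bar identity gives $(y\dashv x)\vdash\alpha(z)=(y\vdash x)\vdash\alpha(z)$, which equals $\alpha(y)\vdash(x\vdash z)$ by left Hom-associativity, so the first two terms cancel; and right Hom-associativity gives $(z\dashv x)\dashv\alpha(y)=\alpha(z)\dashv(x\dashv y)$, which equals $\alpha(z)\dashv(x\vdash y)$ by the right bar identity, so the last two cancel. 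This proves the identity.

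The whole argument is pure bookkeeping; the only real obstacle is the organizational one of keeping track of the sixteen terms and their signs and matching each reduction step to the correct one of the five relations. Notably multiplicativity of $\alpha$ is never used here, since $\alpha$ only ever occurs applied to a single element. (Alternatively one could invoke the classical associative analogue together with a Hom-deformation argument, but the direct computation above is the most economical.)
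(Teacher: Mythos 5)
Your verification is correct: each of the four reductions of the left-hand side uses the right axiom ($\vdash$-associativity, inner associativity twice, $\dashv$-associativity), the four cancellations against the expanded right-hand side are accurate, and the residual four terms vanish exactly by the two bar identities combined with $\vdash$- and $\dashv$-associativity, as you state. The paper records Proposition \ref{asstoLeibniz} without proof, and your direct term-by-term expansion is precisely the standard argument it implicitly relies on, so there is nothing further to add.
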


In the following, we recall the notion of Jordan dialgebras and Jordan trialgebras which are obtained by duplicating and triplicating of Jordan algebra.
Such algebras are investigated in \cite{Kolesnikov}.  Note that  a Jordan dialgebra is also named a Leibniz-Jordan algebra since it is related to a Jordan  algebra in the same way as Leibniz algebras relate to Lie algebras.

\begin{defi}
A (left) Jordan dialgebra is a vector space $A$ equipped with a binary operation $\bullet$ satisfying
\begin{align}
    (x\bullet y)\bullet z=& (y\bullet x)\bullet z,\\
   (x^2,y,z)=& 2(x,y,x\bullet z),\\
    x\bullet (x^2 \bullet y)=& x^2 \bullet (x \bullet y),
\end{align}
for any $x,y,z \in A$, where $x^2=x \bullet x$ and $(x,y,z)=(x \bullet y)\bullet z-x \bullet (y \bullet z)$.
\end{defi}
These equations are equivalent to the following polynomial identities:
\begin{align*}
& (x_1\bullet x_2)\bullet x_3 = (x_2\bullet x_1)\bullet x_3,\\
 & ((x_4\bullet  x_3)\bullet x_2)\bullet x_1
+ x_4\bullet (x_2\bullet (x_3\bullet  x_1))
+ x_3\bullet (x_2\bullet (x_4\bullet  x_1))  \\
 = & (x_4\bullet  x_3)\bullet(x_2\bullet  x_1)
 + (x_4\bullet  x_2)\bullet(x_3\bullet  x_1)
 + (x_3\bullet  x_2)\bullet(x_4\bullet  x_1) , \\
 & x_1\bullet ((x_4\bullet x_3)\bullet x_2)
+ x_4\bullet ((x_3\bullet  x_1)\bullet x_2)
+ x_3\bullet ((x_4\bullet x_1)\bullet x_2)   \\
 = &(x_4\bullet  x_3)\bullet (x_1\bullet x_2)
 + (x_1\bullet x_3)\bullet (x_4 \bullet x_2)
 + (x_4\bullet x_1)\bullet(x_3\bullet x_2).
\end{align*}

\emptycomment{
Now, we introduce the notion of Hom-alternative dialgebras. This class generalizes alternative dialgebras to the Hom-setting.
\begin{defi}
    A Hom-alternative dialgebra is a Hom-$0$-dialgebra $(A,\dashv,\vdash,\a)$ satisfying:
    \begin{align}
&(x,y,z)^\a_\dashv+ (z,y,x)^\a_\vdash =0,\\
&(x,y,z)^\a_\dashv- (y,z,x)^\a_\vdash =0,\\
&(x,y,z)^\a_\times +(x,z,y)^\a_\vdash =0,
    \end{align}
    for any $x,y,z \in A$.
\end{defi}

\begin{pro}
  Let $(A,\dashv,\vdash,\a)$ be a Hom-alternative dialgebra. Then $(A,\diamond,\a)$ is a Hom-Jordan dialgebra, where $\diamond$ is the anti-dicommutator defined by \eqref{antidicomm}.
\end{pro}
}

\emptycomment{\begin{defi}
    \label{def:HomTriDendr}\cite{Makhlouf1}
A Hom-tridendriform algebra is a vector space $A$ equipped with three linear maps  $\prec, \succ ,\cdot: A\otimes A \rightarrow A$
and $\alpha: A \rightarrow A$, where $(A,\cdot,\a)$ is a Hom-associative algebra and for any $x,y,z\in A$
\begin{eqnarray}\label{HomTriDendriCondition1}   (x\prec y)\prec \alpha (z)&=& \alpha(x)\prec(y\prec z +y\succ z+ y\cdot z),
 \\ \label{HomTridDendriCondition2} (x\succ y)\prec \alpha (z)&=&\alpha(x)\succ(y\prec z),\\
\label{HomTriDendriCondition3} \alpha(x)\succ(y\succ z)&=&(x\prec y+x\succ y+x\cdot y)\succ \alpha (z),\\
\label{HomTridDendriCondition4} (x\prec y)\cdot \alpha (z)&=&\alpha(x)\cdot(y\succ z),\\
\label{HomTridDendriCondition5} (x\succ y)\cdot \alpha (z)&=&\alpha(x)\succ(y\cdot z),\\
\label{HomTridDendriCondition6} (x\cdot y)\prec \alpha (z)&=&\alpha(x)\cdot(y\prec z).
\end{eqnarray}
\end{defi}}
\section{Duplications via relative averaging operators}\label{Duplication}
Let $\Omega$ be a variety of algebras over $\mathbb K$  with a single bilinear operation
denoted by $\mu$. In this section, we investigate relative averaging operators on Hom-$\Omega$ algebras to obtain
Hom-di-$\Omega$ algebras and present some basic observations. Especially, we will treat the cases
of Hom-Lie, Hom-associative and  Hom-Jordan algebras.
We will treat each case separately.
Let $(A,\mu,\a)$ be a Hom-$\Omega$ algebra and $T: A \to A$ be a linear map. Then $T$ is called an averaging operator if it satisfies the following conditions:
\begin{align}
    & T \a=\a T,\\
    & T(\mu(T(x),y))=\mu(T(x),T(y))=T(\mu(x,T(y))),\ \forall x,y \in A.
\end{align}
The notion of Nijenhuis operators plays a crucial role in our study.  Recall that a linear map $ N: A \rightarrow A$ is said to be a Nijenhuis operator on the Hom-$\Omega$ algebra $A$ if for all $x,y \in A,$
\begin{align*}
& N\a= \a  N,  \\
& \mu(N(x) , N(y)) = N \big( \mu(N(x), y) ~+~ \mu( x , N(y)) - N\mu( x ,y) \big).
\end{align*}
  \subsection{The Hom-associative case}
  In what follows, we introduce the notion of a relative averaging operator on a Hom-associative algebra with respect to a bimodule.
  Let $(A,V)$ be a \textsf{HAss-Rep} pair as defined in Definition \ref{RepHomAss}.
  \begin{defi}
      A linear map $\mathcal K: V \to A$ is called
      \begin{enumerate}
          \item left relative averaging operator on $A$ with respect to $(V,l,r,\b)$ if
          \begin{align}
  &\mathcal K \b =\a \mathcal K\quad\text{and}\quad \mathcal K(u)\cdot \mathcal K(v)=\mathcal K(l(\mathcal K(u))v),\ \forall\ u,v \in V.
      \end{align}
      \item right relative averaging operator on $A$ with respect to $(V,l,r,\b)$ if
      \begin{align}
  &\mathcal K \b =\a \mathcal K\quad\text{and}\quad \mathcal K(u)\cdot \mathcal K(v)=\mathcal K( r(\mathcal K(v))u),\ \forall\ u,v \in V.
      \end{align}
      \item relative averaging operator on $A$ with respect to $(V,l,r,\b)$ if it's both left and right relative averaging operator.
      \end{enumerate}
  \end{defi}
  We give the following examples:
  \begin{ex}
Let $(A,\c,\a)$ be a Hom-associative algebra. Then the tensor product $A \otimes A$ can be equipped with an $A$-bimodule structure $(l,r,\b) $ with the left and right $A$-actions respectively given by
\begin{align*}
&l(x) (a \otimes b) = x \cdot a \otimes b,\quad r(x)(a \otimes b)  = a \otimes b \cdot x ~~~ \text{ and } ~~~ \b(a\otimes b)=\a(a)\otimes\a(b),\\ &\text{ for }  a \otimes b \in A \otimes A, \ x \in A.
\end{align*}
Consider the map $\mathcal{K}: A \otimes A \rightarrow A$ given by $\mathcal{K}(a \otimes b) = a \cdot b$, for $a \otimes b \in A \otimes A$. For any $a \otimes b,~ a' \otimes b' \in A \otimes A$, we have
\begin{align*}
\mathcal{K} (a \otimes b) \cdot \mathcal{K} (a' \otimes b') = a \cdot b \cdot a' \cdot b' = \begin{cases}
= \mathcal{K} \big(  a \cdot b \cdot a' \otimes b' \big) = \mathcal{K} \big(l( \mathcal{K} (a \otimes b))  (a' \otimes b') \big),\\
=\mathcal{K} \big( a \otimes b \cdot a' \cdot b'  \big) = \mathcal{K} \big( r(\mathcal{K} (a' \otimes b')) (a \otimes b)   \big).
\end{cases}
\end{align*}
This shows that $\mathcal{K} : A \otimes A \rightarrow A$ is a relative averaging operator.
\end{ex}
\begin{ex}
Let $(A,\c,\a)$ be a Hom-associative algebra. Then the space $B=\underbrace{A \oplus \cdots \oplus A}_{n \text{ copies}}$ is an $A$-bimodule where the left (resp. right) $A$-action is given componentwise left (resp. right) multiplication map and the morphism $\b$ is given by $$\b(a_1,\ldots,a_n)=(\a(a_1),\ldots,\a(a_n)).$$ Then it is easy to see that the map
\begin{align*}
\mathcal{K} : B \rightarrow A, ~~ \mathcal{K}\big((a_1, \ldots, a_n)\big) = a_1 + \cdots + a_n, \text{ for } (a_1, \ldots, a_n) \in B
\end{align*}
is a relative averaging operator.
\end{ex}
\emptycomment{\begin{ex}
Let $(A,\cdot,\a)$ be a Hom-associative algebra. Then for any $1 \leq i \leq n$, the $i$-th projection map $P_i : A \oplus \cdots \oplus A \rightarrow A$, $(a_1, \ldots, a_n) \mapsto a_i$ is a relative averaging operator.
\end{ex}}
\begin{ex}
     Let $(A,V)$ be a \textsf{HAss-Rep} pair. Let $f : V \to A$ be an $A$-bimodule map (defined in Example \ref{ex}, item 3). Then it is easy to see that $f$ is a relative averaging operator.
\end{ex}
 Define on the direct sum $A\oplus V$ the mutiplications $\lprod_{A\oplus V},\rprod_{A\oplus V}$ and the linear map $\alpha_{A\oplus V}$ by
\begin{eqnarray}
\alpha_{A\oplus V}(x+u)&=&\alpha(x)+\beta(u),\\
    (x+u)\lprod_{A\oplus V} (y+v)&=&x\cdot y+l(x)v,\\
    (x+u)\rprod_{A\oplus V} (y+v)&=&x\cdot y+r(y)u,
    \, \forall\, x,y\in A,\, u,v\in V.
\end{eqnarray}
 We first prove the following result.
\begin{pro}\label{hsdirect diass}
    With the above notations $(A\oplus V,\lprod_{A\oplus V},\rprod_{A\oplus V},\alpha_{A\oplus V})$ is a Hom-associative dialgebra, called \textbf{hemisemi-direct product} Hom-associative dialgebra and denoted by $A\oplus_{Diass} V$.
\end{pro}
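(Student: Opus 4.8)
The plan is to verify directly that the tuple $(A\oplus V,\lprod_{A\oplus V},\rprod_{A\oplus V},\alpha_{A\oplus V})$ satisfies the five defining axioms of a Hom-associative dialgebra (Definition \ref{defdiass}): multiplicativity of the twist map, the left and right bar identities, and left, right and inner Hom-associativity. Throughout I would write a generic element of $A\oplus V$ as $x+u$ with $x\in A$, $u\in V$, and exploit the fact that every product only outputs a term in $A$ of the form ``(product in $A$)'' plus a term in $V$ of the form $l(\cdot)(\cdot)$ or $r(\cdot)(\cdot)$; in particular the $V$-component of any iterated product collapses after one step because $\lprod_{A\oplus V}$ and $\rprod_{A\oplus V}$ ignore the $V$-slot of their left resp.\ right argument in the $A$-part and only one of the two arguments contributes to the $V$-part. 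This structural observation is what makes the computation short.

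First I would check $\alpha_{A\oplus V}$ is multiplicative for both products: for $\lprod_{A\oplus V}$ this reads $\alpha(x\cdot y)+\beta(l(x)v)=\alpha(x)\cdot\alpha(y)+l(\alpha(x))\beta(v)$, which follows from multiplicativity of $\alpha$ on $(A,\cdot,\alpha)$ together with the equivariance relation $\beta l(x)=l(\alpha(x))\beta$ from \eqref{rAs1}; the case of $\rprod_{A\oplus V}$ is symmetric, using $\beta r(x)=r(\alpha(x))\beta$. Next, the two bar identities: $(X\rprod Y)\lprod \alpha_{A\oplus V}(Z)=(X\lprod Y)\lprod\alpha_{A\oplus V}(Z)$ holds because the left-hand products $\lprod_{A\oplus V}$ with $\alpha_{A\oplus V}(Z)=\alpha(z)+\beta(w)$ produce, in the $V$-slot, $l\big((X\ast Y)_A\big)\beta(w)$ where $(X\ast Y)_A=x\cdot y$ is the same whether $\ast$ is $\lprod_{A\oplus V}$ or $\rprod_{A\oplus V}$ — both outer products only see the $A$-component of the inner product — and the $A$-slots coincide trivially; the right bar identity is dual.

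Finally, for the three Hom-associativity identities I would expand $(X,Y,Z)^{\alpha_{A\oplus V}}_\dashv$, $(X,Y,Z)^{\alpha_{A\oplus V}}_\vdash$ and $(X,Y,Z)^{\alpha_{A\oplus V}}_\times$. The $A$-components of all three reduce to the Hom-associator $(x,y,z)_\alpha=0$ in $A$. The $V$-components are where the module axioms enter: for the right associator one gets $r(y\cdot z)\beta(u)-r(\alpha(z))r(y)u$, which vanishes by the relation $r(x\cdot y)\beta=r(\alpha(y))r(x)$ in \eqref{rAs4}; for the left associator one gets $l(x\cdot y)\beta(w)-l(\alpha(x))l(y)w$, vanishing by $l(x\cdot y)\beta=l(\alpha(x))l(y)$; for the inner associator the relevant terms are $l(\alpha(x))r(z)v$ against $r(\alpha(z))l(x)v$ — together with terms that cancel structurally — and these match by the compatibility $l(\alpha(x))r(y)=r(\alpha(y))l(x)$ in \eqref{rAs4}. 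The main (mild) obstacle is purely bookkeeping: keeping straight, in each of the three associators, exactly which of the three arguments $x+u$, $y+v$, $z+w$ has its $V$-slot surviving into the final $V$-component, since the asymmetry of $\lprod_{A\oplus V}$ (remembers the left $V$-slot) versus $\rprod_{A\oplus V}$ (remembers the right $V$-slot) means different slots survive in the left, right and inner cases; once these are tabulated, each identity is a one-line application of a single module axiom. I would present the left and inner cases in full and note that the right case is obtained by the obvious left-right symmetry.
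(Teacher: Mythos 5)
Your proposal is correct and follows essentially the same route as the paper: a direct component-wise verification of the bar identities and the three Hom-associativity conditions for $\lprod_{A\oplus V}$, $\rprod_{A\oplus V}$, using the Hom-associativity of $A$ together with the bimodule axioms \eqref{rAs1} and \eqref{rAs4} (the paper writes out only two of the identities and leaves the rest as "similar", whereas you tabulate which $V$-slot survives in each case and also record the multiplicativity of $\alpha_{A\oplus V}$). Your identifications of which module axiom closes each associator are exactly the ones the computation requires, so no gap.
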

\begin{proof}
For any $x,y,z\in A$ and $u,v,w\in V$ we have
    \begin{align*}
        &((x+u)\rprod_{A\oplus V}(y+v))\lprod_{A\oplus V}\alpha_{A\oplus V}(z+w)-\alpha_{A\oplus V}(x+u)\lprod_{A\oplus V}((y+v)\lprod_{A\oplus V}(z+w))\\=&(x\cdot y)\cdot \a(z)+l(x\cdot y)\beta(w)-\a(x)\cdot(y\cdot z)-l(\a(x))(l(y)w)\\=&0
        \end{align*}
        and
        \begin{align*}
        &\a_{A\oplus V}(x+u)\rprod_{A\oplus V}((y+v)\rprod_{A\oplus V}(z+w))-\a_{A\oplus V}(x+u)\rprod_{A\oplus V}((y+v)\lprod_{A\oplus V}(z+w))\\=&\a(x)\cdot(y\cdot z)+r(y\cdot z)\beta(u)-(x\cdot y)\cdot \a(z)-r(y\cdot z)\beta(u)\\=&0.
    \end{align*}
    Then, $(A\oplus V,\lprod_{A\oplus V},\rprod_{A\oplus V},\alpha_{A\oplus V})$ is a Hom-$0$-dialgebra. Similarly, we can check \eqref{dialgebra}.
Thus $(A\oplus V,\lprod_{A\oplus V},\rprod_{A\oplus V},\alpha_{A\oplus V})$ is a Hom-associative dialgebra.
\end{proof}
In the following, we give a characterization of a relative averaging operator in terms of graphs.
\begin{thm}\label{graphHass}
Let $(A,V)$ be a \textsf{HAss-Rep} pair. A linear map ${\mathcal K}:V\to A$ is a relative averaging operator on the Hom-associative algebra $(A,\cdot,\alpha)$ if and only if  the graph $Gr({\mathcal K}) = \{{\mathcal K}u+u\ |\ u \in V\}$ is a  subalgebra of the hemisemi-direct product $A\oplus_{Diass} V$.
\end{thm}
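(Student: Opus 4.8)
The plan is to prove the equivalence by unwinding what it means for $Gr(\mathcal{K})$ to be closed under the two products $\lprod_{A\oplus V}$ and $\rprod_{A\oplus V}$ of the hemisemi-direct product, and to match these closure conditions with the left and right relative averaging identities respectively. Since $\mathcal{K}\b = \a\mathcal{K}$ is equivalent to $\alpha_{A\oplus V}$ preserving $Gr(\mathcal{K})$, I will first dispose of that compatibility and then concentrate on the products.

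First I would compute, for $u,v\in V$, the product $(\mathcal{K}u+u)\lprod_{A\oplus V}(\mathcal{K}v+v) = \mathcal{K}(u)\cdot\mathcal{K}(v) + l(\mathcal{K}u)v$. This element lies in $Gr(\mathcal{K})$ if and only if its $A$-component equals $\mathcal{K}$ applied to its $V$-component, i.e. $\mathcal{K}(u)\cdot\mathcal{K}(v) = \mathcal{K}(l(\mathcal{K}u)v)$, which is exactly the left relative averaging condition. Symmetrically, $(\mathcal{K}u+u)\rprod_{A\oplus V}(\mathcal{K}v+v) = \mathcal{K}(u)\cdot\mathcal{K}(v) + r(\mathcal{K}v)u$ lies in $Gr(\mathcal{K})$ if and only if $\mathcal{K}(u)\cdot\mathcal{K}(v) = \mathcal{K}(r(\mathcal{K}v)u)$, the right relative averaging condition. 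Combining the two, $Gr(\mathcal{K})$ is closed under both products and under $\alpha_{A\oplus V}$ precisely when $\mathcal{K}$ is a (two-sided) relative averaging operator.

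For the ``if'' direction I then note that $A\oplus_{Diass}V$ is already known to be a Hom-associative dialgebra by Proposition \ref{hsdirect diass}, so a subspace closed under both products and the twisting map is automatically a sub-Hom-associative-dialgebra; no dialgebra axioms need to be re-verified on $Gr(\mathcal{K})$. For the ``only if'' direction, the computations above are reversible: membership of the displayed products in $Gr(\mathcal{K})$ forces the two averaging identities. The argument is essentially a direct bookkeeping check, so there is no serious obstacle; the only point requiring mild care is to observe that the map $u\mapsto \mathcal{K}u+u$ is injective, so that the $A$- and $V$-components of an element of $Gr(\mathcal{K})$ are uniquely determined and the ``lies in $Gr(\mathcal{K})$ iff $A$-component $=\mathcal{K}(V$-component$)$'' characterization is valid. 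I expect the write-up to be short: set up the identification $V\cong Gr(\mathcal{K})$, compute the two products, and read off the equivalence.
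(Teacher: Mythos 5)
Your proposal is correct and follows essentially the same route as the paper: compute $(\mathcal{K}u+u)\lprod_{A\oplus V}(\mathcal{K}v+v)$ and $(\mathcal{K}u+u)\rprod_{A\oplus V}(\mathcal{K}v+v)$ and read off that closure of the graph is equivalent to the left and right averaging identities. Your extra remarks on the twisting-map compatibility $\mathcal{K}\beta=\alpha\mathcal{K}$ and the injectivity of $u\mapsto \mathcal{K}u+u$ are harmless refinements of the same argument.
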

\begin{proof}
Let $\mathcal{K}: V \to A$ be a linear map.  For all $u,v\in V$, we have
\begin{align*}
(\mathcal{K}u+u)\lprod_{A\oplus V}(\mathcal{K}v+v)=&\mathcal{K}u\cdot \mathcal{K}v +l(\mathcal{K}u)v, \\
(\mathcal{K}u+u)\rprod_{A\oplus V}(\mathcal{K}v+v)=&\mathcal{K}u\cdot \mathcal{K}v +r(\mathcal{K}v)u.
\end{align*}
Therefore, the graph $Gr({\mathcal K}) = \{{\mathcal K}u+u\ |\ u \in V\}$ is a  subalgebra of the hemisemi-direct product $A\oplus_{Diass} V$ if and only if $\mathcal{K}$ satisfies $\mathcal{K}(u)\cdot \mathcal{K}(v)=\mathcal{K}(l(\mathcal{K}u)v)=\mathcal{K}(r(\mathcal{K}v)u)$, wich implies that $\mathcal{K}$ is a relative averaging operator on Hom-associative algebra $A$ with respect to the representation $(V,l,r,\beta)$.
\end{proof}

We have the following proposition whose proof is straightforward.
\begin{pro}
    Let $(A,V)$ be a \textsf{HAss-Rep} pair. A linear map $\mathcal K: V \to A$ is a relative averaging operator on $V$ over $A$ if and only if the map
$$N_{\mathcal K}: (A \oplus V) \to (A \oplus V),\ x+u \mapsto \mathcal K(u)$$
is a Nijenhuis operator on the  hemisemi-direct product $A \oplus_{Diass} V$.
\end{pro}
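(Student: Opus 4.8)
The proof is a direct computation, so the plan is mainly to organise the bookkeeping. First I would fix notation, writing a generic element of $A\oplus V$ as $x+u$ with $x\in A$ and $u\in V$, and record two elementary facts that make everything collapse. On the one hand, $N_{\mathcal K}(x+u)=\mathcal K(u)$ has zero $V$-component, and for $a,b\in A$ one has $(a+0)\lprod_{A\oplus V}(b+0)=a\cdot b=(a+0)\rprod_{A\oplus V}(b+0)$, since the $l$- and $r$-terms in the definitions of $\lprod_{A\oplus V}$ and $\rprod_{A\oplus V}$ receive a zero $V$-input. On the other hand, when $N_{\mathcal K}$ is applied to any element, only the $V$-component of that element survives. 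These two observations are the whole engine of the argument.

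Next I would dispose of the twisting condition: since $N_{\mathcal K}(\alpha_{A\oplus V}(x+u))=\mathcal K(\beta(u))$ while $\alpha_{A\oplus V}(N_{\mathcal K}(x+u))=\alpha(\mathcal K(u))$, the identity $N_{\mathcal K}\alpha_{A\oplus V}=\alpha_{A\oplus V}N_{\mathcal K}$ is equivalent to $\mathcal K\beta=\alpha\mathcal K$.

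Then comes the core step. Reading ``$N_{\mathcal K}$ is a Nijenhuis operator on the Hom-associative dialgebra $A\oplus_{Diass}V$'' as the Nijenhuis identity for each of the two products separately, I would expand, for $X=x+u$ and $Y=y+v$,
\begin{align*}
N_{\mathcal K}(X)\lprod_{A\oplus V}N_{\mathcal K}(Y)&=\mathcal K(u)\cdot\mathcal K(v),\\
N_{\mathcal K}\big(N_{\mathcal K}(X)\lprod_{A\oplus V}Y+X\lprod_{A\oplus V}N_{\mathcal K}(Y)-N_{\mathcal K}(X\lprod_{A\oplus V}Y)\big)&=\mathcal K\big(l(\mathcal K(u))v\big),
\end{align*}
the second line because the $V$-components of $N_{\mathcal K}(X)\lprod_{A\oplus V}Y$, of $X\lprod_{A\oplus V}N_{\mathcal K}(Y)$ and of $N_{\mathcal K}(X\lprod_{A\oplus V}Y)$ are $l(\mathcal K(u))v$, $0$ and $0$ respectively. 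So the $\lprod_{A\oplus V}$-Nijenhuis identity is precisely $\mathcal K(u)\cdot\mathcal K(v)=\mathcal K(l(\mathcal K(u))v)$, the left relative averaging condition. Running the identical computation for $\rprod_{A\oplus V}$, the three $V$-components become $0$, $r(\mathcal K(v))u$ and $0$, so the $\rprod_{A\oplus V}$-Nijenhuis identity is $\mathcal K(u)\cdot\mathcal K(v)=\mathcal K(r(\mathcal K(v))u)$, the right relative averaging condition. Combining the three equivalences yields the statement.

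I do not expect a genuine obstacle here; the only point that needs a little care is the correct reading of the Nijenhuis condition on a structure carrying two products, namely that it must be imposed on $\lprod_{A\oplus V}$ and $\rprod_{A\oplus V}$ separately since a Nijenhuis operator is defined relative to a single bilinear operation, together with keeping the $A$- and $V$-components of every term straight throughout the expansion.
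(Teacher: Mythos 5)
Your proposal is correct, and it is exactly the direct component-wise verification the paper intends (the paper omits it, calling the proof ``straightforward''): the twisting condition reduces to $\mathcal K\beta=\alpha\mathcal K$, and the Nijenhuis identity for $\lprod_{A\oplus V}$ and $\rprod_{A\oplus V}$ reduces respectively to the left and right relative averaging identities, since only the $V$-components survive under $N_{\mathcal K}$. Your remark that the Nijenhuis condition must be imposed on each of the two products separately is the right reading of the definition in the dialgebra setting.
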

Note  that an averaging  operator on a Hom-associative algebra $A$ can be regarded as a relative averaging  operator on the adjoint $A$-bimodule $A$.
In the next result, we show that a relative averaging operator induces an averaging
operator.
\begin{pro}\label{rel av to av}
Let $(A,V)$ be a \textsf{HAss-Rep} pair and let $\mathcal K: V \to A$ be a relative averaging operator on $V$ over the algebra $A$. Then $\overline{\mathcal K}: A \oplus V \to A \oplus V,\ x+u \mapsto \mathcal K(u)+u $ is a left relative averaging operator (respectively right relative averaging operator) on the Hom-associative algebra $(A\oplus V,\lprod_{A\oplus V},\a_{A\oplus V})$, (respectively on $(A\oplus V,\rprod_{A\oplus V},\a_{A\oplus V})$) .
\end{pro}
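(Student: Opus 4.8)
The statement should be read via the remark that precedes it: an averaging operator on a Hom-associative algebra is nothing but a relative averaging operator on its adjoint bimodule, so the claim is that $\overline{\mathcal K}$ is a left (resp. right) relative averaging operator on $(A\oplus V,\lprod_{A\oplus V},\alpha_{A\oplus V})$ (resp. on $(A\oplus V,\rprod_{A\oplus V},\alpha_{A\oplus V})$) with respect to its adjoint bimodule. The plan is a direct verification from the definitions, so I would first note that $(A\oplus V,\lprod_{A\oplus V},\alpha_{A\oplus V})$ and $(A\oplus V,\rprod_{A\oplus V},\alpha_{A\oplus V})$ are (multiplicative) Hom-associative algebras. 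This is already contained in Proposition \ref{hsdirect diass}: for the Hom-associative dialgebra $A\oplus_{Diass}V$ the vanishing of its $\vdash$-associator says exactly that $\lprod_{A\oplus V}$ is Hom-associative, and the vanishing of its $\dashv$-associator says exactly that $\rprod_{A\oplus V}$ is, while multiplicativity of $\alpha_{A\oplus V}$ for either product follows from multiplicativity of $\alpha$ on $A$ and \eqref{rAs1}. Hence each of these algebras carries its adjoint bimodule.

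For the twist-equivariance, evaluating on $x+u$ gives $\overline{\mathcal K}(\alpha_{A\oplus V}(x+u))=\mathcal K(\beta(u))+\beta(u)$ and $\alpha_{A\oplus V}(\overline{\mathcal K}(x+u))=\alpha(\mathcal K(u))+\beta(u)$, so $\overline{\mathcal K}\,\alpha_{A\oplus V}=\alpha_{A\oplus V}\,\overline{\mathcal K}$ reduces to $\mathcal K\beta=\alpha\mathcal K$, part of the hypothesis. For the averaging identity on the $\lprod_{A\oplus V}$ side, using $(x+u)\lprod_{A\oplus V}(y+v)=x\cdot y+l(x)v$ I would compute, for $x+u,y+v\in A\oplus V$,
\begin{align*}
\overline{\mathcal K}(x+u)\lprod_{A\oplus V}\overline{\mathcal K}(y+v)&=(\mathcal K(u)+u)\lprod_{A\oplus V}(\mathcal K(v)+v)=\mathcal K(u)\cdot\mathcal K(v)+l(\mathcal K(u))v,\\
\overline{\mathcal K}\big(\overline{\mathcal K}(x+u)\lprod_{A\oplus V}(y+v)\big)&=\overline{\mathcal K}\big(\mathcal K(u)\cdot y+l(\mathcal K(u))v\big)=\mathcal K\big(l(\mathcal K(u))v\big)+l(\mathcal K(u))v.
\end{align*}
The $V$-components coincide, and the $A$-components coincide precisely because $\mathcal K$ is a left relative averaging operator, i.e. $\mathcal K(u)\cdot\mathcal K(v)=\mathcal K(l(\mathcal K(u))v)$; thus $\overline{\mathcal K}$ satisfies the left relative averaging identity on $(A\oplus V,\lprod_{A\oplus V},\alpha_{A\oplus V})$.

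The right case is entirely parallel: from $(x+u)\rprod_{A\oplus V}(y+v)=x\cdot y+r(y)u$ one gets $\overline{\mathcal K}(x+u)\rprod_{A\oplus V}\overline{\mathcal K}(y+v)=\mathcal K(u)\cdot\mathcal K(v)+r(\mathcal K(v))u$ and $\overline{\mathcal K}\big((x+u)\rprod_{A\oplus V}\overline{\mathcal K}(y+v)\big)=\mathcal K(r(\mathcal K(v))u)+r(\mathcal K(v))u$, which agree because $\mathcal K(u)\cdot\mathcal K(v)=\mathcal K(r(\mathcal K(v))u)$, the right relative averaging identity. I do not expect a genuine obstacle here: once the products are unwound the argument is pure bookkeeping. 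The only points needing care are the interpretation of the statement (the relevant bimodule is the adjoint one on each factor) and invoking the \emph{left} half of the hypothesis on $\mathcal K$ for the $\lprod_{A\oplus V}$ claim and its \emph{right} half for the $\rprod_{A\oplus V}$ claim; indeed this shows that only one half of the relative averaging property of $\mathcal K$ enters each half of the conclusion.
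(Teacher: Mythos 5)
Your proposal is correct and follows essentially the same route as the paper: invoke Proposition \ref{hsdirect diass} for Hom-associativity of $\lprod_{A\oplus V}$ and $\rprod_{A\oplus V}$, then verify the left (resp.\ right) averaging identity for $\overline{\mathcal K}$ by direct computation, which reduces to the left (resp.\ right) half of the relative averaging property of $\mathcal K$. Your explicit check of $\overline{\mathcal K}\,\alpha_{A\oplus V}=\alpha_{A\oplus V}\,\overline{\mathcal K}$ and of the right-hand case are details the paper leaves implicit, but the argument is the same.
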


\begin{proof}
According to Proposition \ref{hsdirect diass}, we have each of $\lprod_{A\oplus V}$ and $\rprod_{A\oplus V}$ is Hom-associative.
For any $x,y\in A$ and $u,v\in V$, we have
\begin{eqnarray*}
    \overline{\mathcal K}(x+u)\cdot_{A\oplus V} \overline{\mathcal K}(y+v)&=& (\mathcal{K}(u)+u)\cdot_{A\oplus V} (\mathcal{K}(v)+v)\\&=&(\mathcal{K}(u)\cdot\mathcal{K}(v))+l(\mathcal{K}(u))v\\&=&\mathcal{K}(l(\mathcal{K}(u)v))+l(\mathcal{K}(u))v.
\end{eqnarray*}
On the other hand, we have
\begin{eqnarray*}
    \overline{\mathcal K}(\overline{\mathcal K}(x+u)\cdot_{A\oplus V} (y+v))&=&\overline{\mathcal K}((\mathcal{K}(u)+u)\cdot_{A\oplus V}(y+v))\\&=&\overline{\mathcal K}(\mathcal{K}(u)\cdot y+l(\mathcal{K}(u))v)\\&=&\mathcal{K}(l(\mathcal{K}(u))v))+l(\mathcal{K}(u))v).
\end{eqnarray*}
Thus, $\overline{\mathcal{K}}$ is a left relative averaging operator on the Hom-associative algebra $(A\oplus V,\lprod_{A\oplus V},\a_{A\oplus V})$.
Similarly, $\overline{\mathcal{K}}$ is a right relative averaging operator on the Hom-associative algebra $(A\oplus V,\rprod_{A\oplus V},\a_{A\oplus V})$.
\end{proof}
\begin{thm}\label{asstodiass}
     Let $\mathcal K: V\rightarrow A$ be a relative averaging operator  on a \textsf{HAss-Rep} pair $(A,V)$.    Then there exists a Hom-associative dialgebra structure $(\vdash_\mathcal{K},\dashv_\mathcal{K})$ on $V$ given by
    \begin{align}
   &     u\vdash_{\mathcal K} v =l(\mathcal K(u))v,\quad
       u\dashv_{\mathcal K} v= r(\mathcal K(v))u,\ \forall u,v \in V.
    \end{align}
\end{thm}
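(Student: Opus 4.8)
The plan is to verify directly that the two operations $\vdash_{\mathcal K}$ and $\dashv_{\mathcal K}$ on $V$ satisfy the five defining axioms of a Hom-associative dialgebra (Definition \ref{defdiass}): the left bar identity, the right bar identity, and the vanishing of the left, right and inner Hom-associators $(u,v,w)^\beta_\dashv$, $(u,v,w)^\beta_\vdash$, $(u,v,w)^\beta_\times$. Along the way I also need $\beta$ to be multiplicative for the new products, i.e. $\beta(u\vdash_{\mathcal K}v)=\beta(u)\vdash_{\mathcal K}\beta(v)$ and similarly for $\dashv_{\mathcal K}$; this follows from $\mathcal K\beta=\alpha\mathcal K$ together with the first compatibility relation $\beta l(x)=l(\alpha(x))\beta$ in \eqref{rAs1} (and the analogous one for $r$).

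The key mechanism is the averaging identity $\mathcal K(u)\cdot\mathcal K(v)=\mathcal K(l(\mathcal K(u))v)=\mathcal K(r(\mathcal K(v))u)$, which I would use to rewrite every expression of the form $\mathcal K(\text{something})$ appearing inside an action back into a product of $\mathcal K$'s in $A$, at which point the Hom-associativity of $(A,\cdot,\alpha)$ and the representation axioms \eqref{rAs4} take over. For instance, for the left Hom-associator I would compute
\begin{align*}
(u\vdash_{\mathcal K}v)\vdash_{\mathcal K}\beta(w)
&=l\big(\mathcal K(l(\mathcal K(u))v)\big)\beta(w)
=l\big(\mathcal K(u)\cdot\mathcal K(v)\big)\beta(w)
=l(\alpha\mathcal K(u))\,l(\mathcal K(v))w,
\end{align*}
using $\mathcal K\beta=\alpha\mathcal K$ on $\beta(w)$... more precisely writing $\beta(w)$ and applying $l(x\cdot y)\beta=l(\alpha(x))l(y)$ from \eqref{rAs4} with $x=\mathcal K(u)$, $y=\mathcal K(v)$; while
\begin{align*}
\beta(u)\vdash_{\mathcal K}(v\vdash_{\mathcal K}w)
&=l\big(\mathcal K(\beta(u))\big)\,l(\mathcal K(v))w
=l(\alpha\mathcal K(u))\,l(\mathcal K(v))w,
\end{align*}
so the two agree. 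The right Hom-associator is handled symmetrically using $r(x\cdot y)\beta=r(\alpha(y))r(x)$, and the inner one using $l(\alpha(x))r(y)=r(\alpha(y))l(x)$. The two bar identities reduce, after the same substitution, to the equalities $l(\mathcal K(u)\cdot\mathcal K(v))\beta(w)=l(\mathcal K(u)\cdot\mathcal K(v))\beta(w)$-type tautologies coming from the fact that $u\vdash_{\mathcal K}v$ and $u\dashv_{\mathcal K}v$ both feed into $\mathcal K(u)\cdot\mathcal K(v)$ after applying $\mathcal K$.

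I expect the main (though still routine) obstacle to be bookkeeping: keeping careful track of where $\alpha$, $\beta$, and the twist relations $\beta l(x)=l(\alpha(x))\beta$ must be inserted, since in the Hom-setting one cannot freely move $\mathcal K\beta=\alpha\mathcal K$ past the actions without invoking \eqref{rAs1}. A cleaner alternative, which I would mention, is to avoid the direct computation entirely: by Theorem \ref{graphHass} the graph $Gr(\mathcal K)$ is a subalgebra of the Hom-associative dialgebra $A\oplus_{Diass}V$, and the projection $Gr(\mathcal K)\to V$, $\mathcal K u+u\mapsto u$ is a linear isomorphism intertwining the restricted operations $\lprod_{A\oplus V},\rprod_{A\oplus V}$ with exactly $\vdash_{\mathcal K},\dashv_{\mathcal K}$ (since $(\mathcal K u+u)\lprod_{A\oplus V}(\mathcal K v+v)=\mathcal K u\cdot\mathcal K v+l(\mathcal K u)v$ has $V$-component $l(\mathcal K u)v=u\vdash_{\mathcal K}v$, and likewise on the right). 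Hence $(V,\vdash_{\mathcal K},\dashv_{\mathcal K},\beta)$ inherits the Hom-associative dialgebra structure from $A\oplus_{Diass}V$ by transport of structure, and Definition \ref{defdiass} is automatic. I would present this conceptual argument as the proof and relegate the direct check to a remark.
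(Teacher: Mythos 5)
Your proposal is correct. The paper proves this theorem by the direct route you sketch first: it computes the left and right Hom-associators exactly as you do, using $\mathcal K(l(\mathcal K u)v)=\mathcal K u\cdot\mathcal K v=\mathcal K(r(\mathcal K v)u)$, $\mathcal K\beta=\alpha\mathcal K$ and the representation axioms \eqref{rAs1}--\eqref{rAs4}, and then states that the remaining axioms of Definition \ref{defdiass} (the bar identities and the inner associator) are checked similarly. Your individual computations match the paper's, and your treatment of the bar identities and the inner associator (via $l(\alpha(x))r(y)=r(\alpha(y))l(x)$) fills in precisely the steps the paper leaves to the reader. What is genuinely different is the argument you say you would actually present: transport of structure along the linear isomorphism $u\mapsto \mathcal K u+u$ from $V$ onto $Gr(\mathcal K)$, which by Theorem \ref{graphHass} is a subalgebra of the Hom-associative dialgebra $A\oplus_{Diass}V$ of Proposition \ref{hsdirect diass}; since the $V$-components of the restricted products are exactly $u\vdash_{\mathcal K}v$ and $u\dashv_{\mathcal K}v$, and $Gr(\mathcal K)$ is closed under $\alpha_{A\oplus V}$ because $\mathcal K\beta=\alpha\mathcal K$, the dialgebra identities on $V$ follow with no further computation. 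This is a valid and non-circular shortcut (both ingredients are proved before and independently of Theorem \ref{asstodiass} in the paper), and it buys economy: the five identities are inherited rather than verified one by one. The paper's direct computation, by contrast, is self-contained and makes visible exactly which representation axiom is responsible for which dialgebra identity, which is useful when one wants to weaken hypotheses (e.g.\ for the separate left/right relative averaging operators). Either write-up would be acceptable; if you present the transport argument, do state explicitly the closure of $Gr(\mathcal K)$ under the twist map and the identification of the induced twist with $\beta$, since in the Hom-setting these are part of the data being transported.
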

\begin{proof}
First we begin by  showing  that $(V,\vdash_{\mathcal K},\b)$ and $(V,\dashv_{\mathcal K},\b)$ are Hom-associative algebras, let $u,v,w\in V$ we have
\begin{align*}
    (u\vdash_{\mathcal K}v)\vdash_{\mathcal K}\beta(w)-\beta(u)\vdash_{\mathcal K}(v\vdash_{\mathcal K} w)=&l({\mathcal K}(l({\mathcal K}u)v))\beta(w)-l({\mathcal K}\beta(u))(l({\mathcal K}v)w)\\
    =&l({\mathcal K}u\cdot{\mathcal K}v)\beta(w)-l(\a({\mathcal K}u))(l({\mathcal K}v)w)\\
    =&0,
 \end{align*}
 and
    \begin{align*}
    (u\dashv_{\mathcal K}v)\dashv_{\mathcal K}\beta(w)-\beta(u)\dashv_{\mathcal K}(v\dashv_{\mathcal K}w)=&r({\mathcal K}\beta(w))(r({\mathcal K}v)u)-r({\mathcal K}(r({\mathcal K}w)v))\beta(u)\\
    =&r(\a({\mathcal K}w))(r({\mathcal K}v)u)-r({\mathcal K}v\cdot{\mathcal K}w)\beta(u)\\
    =&0.
\end{align*}
Similarly, one can check the other axioms of Definition \ref{defdiass}.
Thus $(V,\vdash_{\mathcal K},\dashv_{\mathcal K},\beta)$ is a Hom-associative dialgebra.
\end{proof}

For the case of Lie algebras, a relative averaging operator is also named an embedding tensor. In this paper, we use the name "relative averaging operator" to unify nomination for all algebraic structures treated here. The notion of relative averaging operator (an embedding tensor) on a Lie algebra was introduced in \cite{sheng-embedd}. A generalization of this notion on Hom-setting was introduced in \cite{dasmakhlouf}.
In what follow we recall some results. Let $(A,V)$ be a \textsf{HLie-Rep} pair.
\begin{defi}
       A linear map $\mathcal K: V \to A$ is called a relative averaging operator on \textsf{HLie-Rep} pair if $\mathcal K \b =\a \mathcal K$ and
      \begin{align}
  & [\mathcal K(u), \mathcal K(v)]=\mathcal K(\rho(\mathcal K(u))v),\ \forall u,v \in V.
      \end{align}
\end{defi}
Let $(A,V)$ be a \textsf{HLie-Rep} pair.  On the direct sum vector space $A\oplus V$,  define the two maps $\alpha_{A\oplus V}$ and $\{\cdot,\cdot\}_{A\oplus V}$    by
\begin{eqnarray}
    \alpha_{A\oplus V}(x+u)&=&\alpha(x)+\beta(u),\\\{x+u,y+v\}_{A\oplus V}&=&[x,y]+\rho(x)v.
\end{eqnarray}

\begin{pro}\label{hsdirect Lie}
  With the above notations, $(A\oplus V,\{\cdot,\cdot\}_{A\oplus V},\ \alpha_{A\oplus V})$ is Hom-Leibniz algebra, called \textbf{hemisemi-direct product} Hom-Leibniz algebra and denoted by $A\oplus_{Leib} V$.
\end{pro}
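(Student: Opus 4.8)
The plan is to verify the two defining properties of a (multiplicative) Hom-Leibniz algebra for the triple $(A\oplus V,\{\cdot,\cdot\}_{A\oplus V},\alpha_{A\oplus V})$: first that $\alpha_{A\oplus V}$ is multiplicative with respect to $\{\cdot,\cdot\}_{A\oplus V}$, and second that the Hom-Leibniz identity holds for all $x,y,z\in A$ and $u,v,w\in V$.

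For multiplicativity, I would compute $\alpha_{A\oplus V}\{x+u,y+v\}_{A\oplus V}=\alpha([x,y])+\beta(\rho(x)v)$ and compare it with $\{\alpha_{A\oplus V}(x+u),\alpha_{A\oplus V}(y+v)\}_{A\oplus V}=[\alpha(x),\alpha(y)]+\rho(\alpha(x))\beta(v)$; the two agree because $\alpha$ is multiplicative on the Hom-Lie algebra $A$ and because of the intertwining relation $\beta\rho(x)=\rho(\alpha(x))\beta$ coming from the definition of a representation of a Hom-Lie algebra.

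For the Hom-Leibniz identity, I would set $X=x+u$, $Y=y+v$, $Z=z+w$ and expand the three terms $\{\alpha_{A\oplus V}X,\{Y,Z\}_{A\oplus V}\}_{A\oplus V}$, $\{\{X,Y\}_{A\oplus V},\alpha_{A\oplus V}Z\}_{A\oplus V}$ and $\{\alpha_{A\oplus V}Y,\{X,Z\}_{A\oplus V}\}_{A\oplus V}$ using only the defining rule $\{a+p,b+q\}_{A\oplus V}=[a,b]+\rho(a)q$. Note that the $V$-component of a bracket never re-enters another bracket, so there is nothing to check on pairs of elements of $V$. Separating the resulting identity into its $A$-part and its $V$-part, the $A$-part becomes $[\alpha(x),[y,z]]=[[x,y],\alpha(z)]+[\alpha(y),[x,z]]$, which is exactly the Hom-Jacobi identity of $A$ after applying the skew-symmetry of $[\cdot,\cdot]$; the $V$-part becomes $\rho(\alpha(x))\rho(y)w=\rho([x,y])\beta(w)+\rho(\alpha(y))\rho(x)w$, which is precisely the second defining relation of the representation $(V,\rho,\beta)$.

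I do not expect any serious obstacle here: the computation is routine. The only point that requires care is bookkeeping, since $\{\cdot,\cdot\}_{A\oplus V}$ is genuinely not skew-symmetric (its $V$-component $\rho(x)v$ is not antisymmetric in $x+u,y+v$), so the three terms of the Hom-Leibniz identity must be kept in the correct order — which is also the structural reason why one obtains a Hom-Leibniz algebra rather than a Hom-Lie algebra.
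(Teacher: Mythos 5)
Your proof is correct: the direct expansion of the Hom-Leibniz identity splits exactly into the Hom-Jacobi identity (via skew-symmetry) in the $A$-component and the representation axiom $\rho([x,y])\beta=\rho(\alpha(x))\rho(y)-\rho(\alpha(y))\rho(x)$ in the $V$-component, with multiplicativity following from $\beta\rho(x)=\rho(\alpha(x))\beta$. The paper states this proposition without proof, but your routine componentwise verification is exactly the same kind of argument the paper carries out for the analogous hemisemi-direct product constructions (e.g.\ Proposition \ref{hsdirect diass}), so there is nothing further to add.
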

 \begin{thm}\label{graphHLie}
Let $(A,V)$ be a \textsf{HLie-Rep} pair. A linear map $\mathcal{K}:V\to A$ is a relative averaging operator on the Hom-Lie algebra $(A,[\cdot,\cdot],\alpha)$ if and only if  the graph $Gr(\mathcal{K}) = \{\mathcal{K}u+u\ |\ u \in V\}$ is a Hom-Leibniz subalgebra of the hemisemi-direct product $A\oplus_{Leib}V$.
\end{thm}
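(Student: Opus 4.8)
The plan is to mimic exactly the structure of the proof of Theorem \ref{graphHass}, replacing the pair of dialgebra products by the single Leibniz bracket $\{\cdot,\cdot\}_{A\oplus V}$. First I would observe that, since $\alpha_{A\oplus V}(\mathcal{K}u+u) = \alpha(\mathcal{K}u)+\beta(u) = \mathcal{K}(\beta u)+\beta u$ precisely when $\mathcal{K}\beta=\alpha\mathcal{K}$, the twist map $\alpha_{A\oplus V}$ preserves $Gr(\mathcal{K})$ if and only if $\mathcal{K}$ intertwines $\beta$ and $\alpha$. So that compatibility condition is handled separately and corresponds to the first half of the definition of a relative averaging operator.

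Next, for the bracket: for all $u,v\in V$ I would compute directly from the definition of $\{\cdot,\cdot\}_{A\oplus V}$ that
\[
\{\mathcal{K}u+u,\ \mathcal{K}v+v\}_{A\oplus V} = [\mathcal{K}u,\mathcal{K}v] + \rho(\mathcal{K}u)v .
\]
Hence $Gr(\mathcal{K})$ is closed under the bracket if and only if the right-hand side lies in $Gr(\mathcal{K})$, i.e. if and only if its $A$-component is the image under $\mathcal{K}$ of its $V$-component, namely $[\mathcal{K}u,\mathcal{K}v] = \mathcal{K}(\rho(\mathcal{K}u)v)$ for all $u,v\in V$. This is exactly the defining identity of a relative averaging operator on the \textsf{HLie-Rep} pair. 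Combining this with the $\alpha$-compatibility from the previous step gives the equivalence. Finally, since $A\oplus_{Leib}V$ is a Hom-Leibniz algebra by Proposition \ref{hsdirect Lie}, any subspace closed under its bracket and stable under $\alpha_{A\oplus V}$ inherits the Hom-Leibniz structure, so $Gr(\mathcal{K})$ is genuinely a Hom-Leibniz subalgebra; conversely a Hom-Leibniz subalgebra is in particular bracket-closed and $\alpha$-stable, which is what we used. This establishes both directions.

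There is essentially no obstacle here: the statement is a direct transcription of Theorem \ref{graphHass} to the Lie/Leibniz setting, and the only computation — expanding $\{\mathcal{K}u+u,\mathcal{K}v+v\}_{A\oplus V}$ — is a one-line substitution. The one point deserving a word of care is that, unlike the associative/dialgebra case where closure required matching two identities (one for $\lprod$ and one for $\rprod$), here only a single identity appears because the hemisemi-direct product carries just one operation; so no symmetry or compatibility between several products needs to be checked. One should also note that $Gr(\mathcal{K})\cong V$ as vector spaces via $u\mapsto \mathcal{K}u+u$, so "subalgebra" is meant in the obvious sense of a subspace closed under the structure maps, and the verification above is complete once that identification is made explicit.
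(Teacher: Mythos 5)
Your proposal is correct and follows exactly the argument the paper uses for the analogous Theorem \ref{graphHass} (the paper leaves Theorem \ref{graphHLie} unproved as the evident Lie/Leibniz transcription): expand $\{\mathcal{K}u+u,\mathcal{K}v+v\}_{A\oplus V}=[\mathcal{K}u,\mathcal{K}v]+\rho(\mathcal{K}u)v$ and note that closure of the graph is equivalent to $[\mathcal{K}u,\mathcal{K}v]=\mathcal{K}(\rho(\mathcal{K}u)v)$, with $\alpha_{A\oplus V}$-stability of the graph corresponding to $\mathcal{K}\beta=\alpha\mathcal{K}$. Your additional explicit treatment of the twist-map compatibility is a harmless (indeed welcome) refinement of the same proof.
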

\begin{pro}
   Let $(A,V)$ be a \textsf{HLie-Rep} pair. A linear map $\mathcal K: V \to A$ is a relative averaging operator on $V$ over $A$ if and only if the map
$$N_{\mathcal K}: (A \oplus V) \to (A \oplus V),\ x+u \mapsto \mathcal K(u)$$
is a Nijenhuis operator on the  hemisemi-direct product $A \oplus_{Leib} V$.
\end{pro}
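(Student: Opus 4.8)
The plan is to unravel the definition of a Nijenhuis operator on the Hom-Leibniz algebra $A\oplus_{Leib}V$ furnished by Proposition~\ref{hsdirect Lie} and to read off, term by term, the defining conditions of a relative averaging operator. I would first check the compatibility of $N_{\mathcal K}$ with the twisting map: since $N_{\mathcal K}(x+u)=\mathcal K(u)$ is an element of $A\oplus V$ whose $V$-component vanishes, one computes $N_{\mathcal K}\alpha_{A\oplus V}(x+u)=\mathcal K(\beta(u))$ and $\alpha_{A\oplus V}N_{\mathcal K}(x+u)=\alpha(\mathcal K(u))$, so that $N_{\mathcal K}\alpha_{A\oplus V}=\alpha_{A\oplus V}N_{\mathcal K}$ is equivalent to $\mathcal K\beta=\alpha\mathcal K$.

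Next I would evaluate the Nijenhuis identity on arbitrary elements $X=x+u$ and $Y=y+v$ of $A\oplus V$. The key simplification is that both $N_{\mathcal K}(X)$ and $N_{\mathcal K}(Y)$ have zero $V$-component, so that with the bracket $\{x+u,y+v\}_{A\oplus V}=[x,y]+\rho(x)v$ one obtains $\{N_{\mathcal K}(X),N_{\mathcal K}(Y)\}_{A\oplus V}=[\mathcal K(u),\mathcal K(v)]$, $\{N_{\mathcal K}(X),Y\}_{A\oplus V}=[\mathcal K(u),y]+\rho(\mathcal K(u))v$, $\{X,N_{\mathcal K}(Y)\}_{A\oplus V}=[x,\mathcal K(v)]$, and $N_{\mathcal K}\{X,Y\}_{A\oplus V}=\mathcal K(\rho(x)v)$. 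Applying $N_{\mathcal K}$ to the combination $\{N_{\mathcal K}(X),Y\}_{A\oplus V}+\{X,N_{\mathcal K}(Y)\}_{A\oplus V}-N_{\mathcal K}\{X,Y\}_{A\oplus V}$ --- and recalling that $N_{\mathcal K}$ discards the $A$-component and applies $\mathcal K$ to the $V$-component --- annihilates the terms $[\mathcal K(u),y]$, $[x,\mathcal K(v)]$ and $\mathcal K(\rho(x)v)$, which all lie in $A$, and leaves precisely $\mathcal K(\rho(\mathcal K(u))v)$. Hence the Nijenhuis identity reduces to $[\mathcal K(u),\mathcal K(v)]=\mathcal K(\rho(\mathcal K(u))v)$ for all $u,v\in V$, which together with the previous equivalence yields the claim.

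I do not expect a genuine obstacle here: the whole argument is a direct unwinding of the two relevant definitions, and the only point requiring care is the bookkeeping of which summands of an element of $A\oplus V$ belong to $A$ and which to $V$, since $N_{\mathcal K}$ depends only on the $V$-component. It is also worth noting that the asymmetry of the Leibniz bracket --- only the term $\rho(x)v$ appears --- is exactly what makes the surviving term match the single condition $[\mathcal K(u),\mathcal K(v)]=\mathcal K(\rho(\mathcal K(u))v)$, paralleling the Hom-associative case treated earlier in this section.
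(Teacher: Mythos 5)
Your computation is correct: the twist condition $N_{\mathcal K}\alpha_{A\oplus V}=\alpha_{A\oplus V}N_{\mathcal K}$ reduces to $\mathcal K\beta=\alpha\mathcal K$, and since $N_{\mathcal K}$ kills $A$-components, the Nijenhuis identity on $A\oplus_{Leib}V$ collapses exactly to $[\mathcal K(u),\mathcal K(v)]=\mathcal K(\rho(\mathcal K(u))v)$. This is precisely the straightforward direct verification the paper leaves to the reader, so your proof matches the intended argument.
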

 Similarly as for Hom-associative algebras (Proposition \ref{rel av to av}), a relative averaging operator induces an averaging operator. We will not write the result to avoid repetition.
\begin{thm}\label{LietoLeibniz}

 Let $\mathcal{K}:V\rightarrow A$ be a relative averaging operator  on a \textsf{HLie-Rep} pair algebra $ (A,[\cdot,\cdot],\alpha)$ with respect to the representation  $(V,\rho,\beta )$,   then $(V,\{\cdot,\cdot\}_\mathcal{K},\beta )$ is a   Hom-Leibniz algebra, where
    \begin{eqnarray*}
\{u,v\}_\mathcal{K} &=&\rho(\mathcal{K}(u))(v),\quad \forall u,v\in V.
    \end{eqnarray*}
\end{thm}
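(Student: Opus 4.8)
The plan is to verify directly that the bracket $\{u,v\}_{\mathcal K}=\rho(\mathcal K(u))(v)$ together with the twisting map $\beta$ satisfies the Hom-Leibniz identity, using only the defining relations of a \textsf{HLie-Rep} pair and the relative averaging condition $[\mathcal K(u),\mathcal K(v)]=\mathcal K(\rho(\mathcal K(u))v)$. There is a slicker route that I would take instead of brute force: by Theorem \ref{graphHLie}, the graph $Gr(\mathcal K)=\{\mathcal K u+u\mid u\in V\}$ is a Hom-Leibniz subalgebra of $A\oplus_{Leib}V$, hence it inherits a Hom-Leibniz structure. Since $\mathcal K\beta=\alpha\mathcal K$, the map $u\mapsto \mathcal K u+u$ is an isomorphism of vector spaces from $V$ onto $Gr(\mathcal K)$ that intertwines $\beta$ with $\alpha_{A\oplus V}|_{Gr(\mathcal K)}$; transporting the inherited bracket back to $V$ gives precisely $\{u,v\}_{\mathcal K}$, because in $A\oplus_{Leib}V$ one computes $\{\mathcal K u+u,\ \mathcal K v+v\}_{A\oplus V}=[\mathcal K u,\mathcal K v]+\rho(\mathcal K u)v=\mathcal K(\rho(\mathcal K u)v)+\rho(\mathcal K u)v$, which is the image under $w\mapsto\mathcal K w+w$ of $\{u,v\}_{\mathcal K}$. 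Therefore $(V,\{\cdot,\cdot\}_{\mathcal K},\beta)$ is a Hom-Leibniz algebra, being isomorphic to the Hom-Leibniz algebra $Gr(\mathcal K)$.

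Alternatively, and for completeness, one can give the direct computation. First one checks the equivariance $\beta\{u,v\}_{\mathcal K}=\{\beta u,\beta v\}_{\mathcal K}$: indeed $\beta\rho(\mathcal K u)v=\rho(\alpha(\mathcal K u))\beta v=\rho(\mathcal K(\beta u))\beta v$, using $\rho(\alpha(x))\beta=\beta\rho(x)$ and $\alpha\mathcal K=\mathcal K\beta$. Then one expands
\begin{align*}
\{\beta u,\{v,w\}_{\mathcal K}\}_{\mathcal K}
&=\rho(\mathcal K\beta u)\rho(\mathcal K v)w
=\rho(\alpha(\mathcal K u))\rho(\mathcal K v)w,\\
\{\{u,v\}_{\mathcal K},\beta w\}_{\mathcal K}
&=\rho\big(\mathcal K(\rho(\mathcal K u)v)\big)\beta w
=\rho([\mathcal K u,\mathcal K v])\beta w,\\
\{\beta v,\{u,w\}_{\mathcal K}\}_{\mathcal K}
&=\rho(\alpha(\mathcal K v))\rho(\mathcal K u)w,
\end{align*}
where the middle line uses the relative averaging condition. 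The Hom-Leibniz identity $\{\beta u,\{v,w\}_{\mathcal K}\}_{\mathcal K}=\{\{u,v\}_{\mathcal K},\beta w\}_{\mathcal K}+\{\beta v,\{u,w\}_{\mathcal K}\}_{\mathcal K}$ then becomes exactly the representation axiom $\rho([\mathcal K u,\mathcal K v])\beta=\rho(\alpha(\mathcal K u))\rho(\mathcal K v)-\rho(\alpha(\mathcal K v))\rho(\mathcal K u)$ evaluated on $w$, which holds since $(V,\rho,\beta)$ is a representation of the Hom-Lie algebra $A$.

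I expect no genuine obstacle here; the only point requiring a little care is making sure the twisting maps match up correctly when transporting structure along $u\mapsto\mathcal K u+u$ (equivalently, keeping track of $\alpha\mathcal K=\mathcal K\beta$ and $\rho(\alpha(x))\beta=\beta\rho(x)$ in the direct verification). Both arguments are short; I would present the direct computation as the proof since it is self-contained and makes transparent that the single nontrivial input is the relative averaging identity, which converts the term $\rho(\mathcal K(\rho(\mathcal K u)v))$ into $\rho([\mathcal K u,\mathcal K v])$.
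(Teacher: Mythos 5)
Your proof is correct. The paper states Theorem \ref{LietoLeibniz} without writing a proof (it is recalled from the Hom-Lie embedding-tensor setting of the cited literature), and your direct verification --- using $\mathcal{K}\beta=\alpha\mathcal{K}$, the averaging identity to convert $\rho\big(\mathcal{K}(\rho(\mathcal{K}u)v)\big)$ into $\rho([\mathcal{K}u,\mathcal{K}v])$, and the representation axiom $\rho([x,y])\beta=\rho(\alpha(x))\rho(y)-\rho(\alpha(y))\rho(x)$ --- is exactly the kind of argument the paper gives for the parallel statements (Theorems \ref{asstodiass} and \ref{JortodiJor}), so it matches the intended approach; your alternative via the graph characterization of Theorem \ref{graphHLie} is also sound.
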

We have the following commuting diagram
\begin{equation*}
\xymatrix{\text{Hom-Leib}\ar@{<-}[d]_{\text{Thm}\ \ref{LietoLeibniz}} & &\text{Hom-diass} \ar@{->}_{\text{Prop}\ \ref{asstoLeibniz}}[ll]\ar@{<-}_{\text{Thm}\ \ref{asstodiass}}[d] \\
\text{Hom-Lie} & & \text{Hom-ass} \ar@{->}_{-}[ll] }
\end{equation*}


\subsection{The case of Hom-Jordan}


\emptycomment{
\begin{defi}
Let $(A,\mu,\alpha)$ be a Hom-$\Omega$ algebra.
     A linear map $P: A \to A$  is called an averaging operator,
     if for any $x,y \in A$,
     \begin{align}
          P \a =& \a P,\\
         \label{avop} P(\mu(x,P(y)))=& \mu(P(x),P(y))=P(\mu(P(x),y)).
     \end{align}
\end{defi}
Then, we call $(A, \mu,\a,P)$ an averaging Hom-$\Omega$ algebra.

\begin{pro}
Let $(A,\cdot,\a,P)$ be a averaging Hom-associative algebra. Then the following products
\begin{align*}
    x \star y= x \cdot P(y),\quad x \diamond y =P(x) \cdot y,
\end{align*}
define, both, Hom-associative structures on $A$. In addition, $P$ is also an averaging operator on $(A,\star,\a)$ and $(A,\diamond,\a)$.
\end{pro}
\begin{proof}
Let $x,y,z\in A$, we have
\begin{eqnarray*}
    (x\star y)\star \alpha(z)-\alpha(x)\star (y\star z)&=&(x\cdot P(y))\cdot P(\a(z))-\a(x)\cdot P(y\cdot P(z))\\&\overset{\eqref{avop}}{=}&(x\cdot P(y))\cdot \a(P(z))-\a(x)\cdot (P(y)\cdot P(z))\\&=&0.
\end{eqnarray*}
Thus, $(A,\star,\a)$ is Hom-associative. Analogously, one can check the Hom-associativity of operation $\diamond$.
\end{proof}
\begin{thm}\label{asstodiass}
    Let $(A,\cdot,\alpha,P)$ be an averaging Hom-associative algebra then $(A,\vdash_P,\dashv_P,\alpha)$ is a Hom-associative dialgebra, where $\vdash_P,\dashv_P$ are defined as follow
    \begin{eqnarray*}
       x\vdash_P y&=&P(x)\cdot y,\\
       x\dashv_P y&=&x\cdot P(y)).
    \end{eqnarray*}
    \end{thm}
    \begin{proof}
    \end{proof}

\begin{defi}
  A representation (a bimodule) of averaging Hom-associative algebra $(A,\mu,\alpha,P)$ is a tuple $(V,l,r,\beta,Q)$, where $(V,l,r,\beta)$ is a representation of the Hom-associative algebra $(A,\mu,\alpha)$ and $Q:V\to V$ is a linear map satisfying, for all $x,y\in A,$
  \begin{align}
     \label{AveassR1} &l(P(x))Q=Q\circ l(P(x))=Q\circ l(x)\circ Q,\\
      \label{AveassR2}&r(P(x))Q=Q\circ r(P(x))=Q\circ r(x)\circ Q.
  \end{align}
\end{defi}
  \begin{pro}
   $(V,l,r,\beta,Q)$ is a representation of the averaging Hom-associative algebra $(A,\mu,\alpha,P)$  if and only if $(A\oplus V,\mu_{A\oplus V},\alpha_{A\oplus V},P_{A\oplus V})$  is an averaging Hom-associative algebra, where
\begin{eqnarray*}
   \mu_{A\oplus V}(x+u,y+v)&=&\mu(x,y)+l(x)u+r(y)v,\\
   \alpha_{A\oplus V}(x+u)&=&\alpha(x)+\beta(u),\\
   P_{A\oplus V}(x+u)&=&P(x)+Q(u).
\end{eqnarray*}
  \end{pro}

  \begin{proof}
      By \cite{Attan} we have $(A\oplus V,\mu_{A\oplus V},\alpha_{A\oplus V})$ is a Hom-associative algebra.

      Now, it remains to show that $P_{A\oplus V}$ is an averaging operator of ${A\oplus V}$, let $x,y\in A$ and $u,v\in V$
      \begin{eqnarray*}
          \mu_{A\oplus V}\big(P_{A\oplus V}(x+u),P_{A\oplus V}(y+v)\big)&=&\mu_{A\oplus V}\big(P(x)+Q(u),P(y)+Q(v)\big)\\&=&\mu(P(x),P(y))+l(P(x))Q(v)+r(P(y))Q(u).\\
          P_{A\oplus V}\big(\mu_{A\oplus V}(P_{A\oplus V}(x+u),y+v)\big)&=&P_{A\oplus V}\big(\mu_{A\oplus V}(P(x)+Q(u),y+v)\big)\\&=&P_{A\oplus V}\big(\mu(P(x),y)+l(P(x))v+r(y)Q(u)\big)\\&=&P(\mu(P(x),y))+Q(l(P(x))v)+Q(r(y)Q(u)),\\
          P_{A\oplus V}\big(\mu_{A\oplus V}(x+u,P_{A\oplus V}(y+v)\big)&=&P(\mu(x,P(y)))+Q(l(x)Q(v))+Q(r(P(y))u).
      \end{eqnarray*}
      According to \eqref{AveassR1} and \eqref{AveassR2} we have \begin{eqnarray*}
           \mu_{A\oplus V}\big(P_{A\oplus V}(x+u),P_{A\oplus V}(y+v)\big)&=&P_{A\oplus V}\big(\mu_{A\oplus V}(P_{A\oplus V}(x+u),y+v)\big)\\&=&P_{A\oplus V}\big(\mu_{A\oplus V}(x+u,P_{A\oplus V}(y+v)\big)
      \end{eqnarray*}
      Thus $(A\oplus V,\mu_{A\oplus V},\alpha_{A\oplus V},P_{A\oplus V})$  is an averaging Hom-associative algebra.
  \end{proof}
Let $(A,\cdot,\a,P)$ be an averaging Hom-associative algebra and $(V,l,r,\b,Q)$ be a bimodule over  $(A,\cdot,\a,P)$. Then we can
make $V$ into a bimodule over $(A,\star,\a)$ and $(A,\diamond,\a)$. For any $x \in A, u \in V$, we define the
following actions:
\begin{align}
    & x \rhd u:=l(x)Q(u)   , \quad u \lhd x:=r(P(x))u.
\\
    & x \blacktriangleright u:=l(P(x))u   , \quad u \blacktriangleleft x:=r(x)Q(u).
\end{align}

\begin{pro}
Under the following notations, $(V,\rhd,\lhd,\b)$ (resp. $(V,\blacktriangleright,\blacktriangleleft,\beta)$ is a bimodule over $(A,\star,\a)$ (resp. $(A,\diamond,\a)$).
\end{pro}
\begin{proof}
For any $x,y\in A$ and $u\in V$, we have
\begin{eqnarray*}
    \b(x\rhd u)-\a(x)\rhd \b(u)&=&\b(l(x)Q(u))-l(\a(x))Q(u)\\& \overset{\eqref{rAs1}}{=}&0.
\end{eqnarray*}
Similarly, for the operation $\lhd$.

    Moreover, we have
    \begin{eqnarray*}
        (x\star y)\rhd \beta(u)&=&(x\cdot P(y))\rhd \b(u)\\&=&l(x\cdot y)Q(\b(u))\\&=&l(\a(x))(l(P(y))(Q(u)),\end{eqnarray*}
        \begin{eqnarray*}
        \a(x)\rhd(y\rhd z)&=&l(\a(x))(Q(l(y))Q(u))\\& \overset{\eqref{AveassR1}}{=}& l(\a(x))(l(P(y))(Q(u)).
    \end{eqnarray*}
    This implies that $(V,\rhd,\beta)$ is a left module over $(A,\star,\alpha)$. Similarly, one can check that $(V,\lhd,\beta)$ is a right module over $(A,\star,\a)$.

    Furthermore, we have
    \begin{eqnarray*}
        \a(x)\rhd(u\lhd y)-(x\rhd u)\lhd\a(y)&=&l(\a(x))(Q(r(P(y))u))-r(P(\a(y)))(l(x)Q(u))\\& \overset{\eqref{AveassR2}}{=}&l(\a(x))(r(P(y))Q(u))-r(\a(P(y)))(l(x)Q(u))\\& \overset{\eqref{rAs4}}{=}&0.
    \end{eqnarray*}
    Thus, $(V,\rhd,\lhd,\beta)$ is a bimodule over $(A,\star,\a)$.

    Similarly, one can check that $(V,\blacktriangleright,\blacktriangleleft,\beta)$ is a bimodule over $(A,\diamond,\a)$.
\end{proof}

}

\emptycomment{

Let $(A,\cdot,\alpha)$ be a Hom-associative algebra and $(V,l,r,\beta)$ be a bimodule over $A$, for $n\geq 1$ we define  a $\K-$vector space $C_{Hom}^n(A,V)$ of $n$-cochains as follows : \\a cochain $\varphi\in C_{Hom}^n(A,V)$ is an $n$-linear map $\varphi : A^n \rightarrow V \; \hbox{satisfying}$ $$\beta \circ \varphi(x_1,...,x_{n})=\varphi\big(\alpha (x_1),\alpha(x_2),...,\alpha(x_{n})\big), \; \hbox{for all}\; x_1,x_2,...,x_{n} \in A.$$

Recall that, for $n\geq1$,  $n$-coboundary operator of the Hom-associative algebra $(A,\mu,\alpha)$ the linear map $\delta_{Hom}^{n} : \mathcal{C}_{Hom}^n(A,V) \rightarrow \mathcal{C}_{Hom}^{n+1}(A,V)$ defined by \begin{align}\label{HomCohomo}\delta_{Hom}^n \varphi (x_1,x_2,...,x_{n+1})& =l (\alpha^{n-1}(x_1))(\varphi(x_2,x_3,...,x_{n+1})\\ \ & +\sum_{i=1}^n (-1)^i \varphi\big(\alpha(x_0),\alpha(x_1),...,\alpha(x_{i-1}),x_{i}\cdot x_{i+1},\alpha(x_{i+2}),...,\alpha(x_{n+1})\big)\nonumber \\ \ &+(-1)^{n+1}r(\alpha^{n-1}(x_{n+1}))(\varphi(x_1,...,x_{n})).\nonumber
\end{align}
 Consider the Hochschild cochain complex of $(A,\star,\alpha)$ with the
 coefficient in bimodule $(V, \rhd, \lhd,\beta)$. Denote this cochain
 complex by $$\mathcal{C}_{r,Hom}^\bullet(A,V)=\bigoplus\limits_{i=0}^\infty \mathcal{C}_{r,Hom}^n(A,V),$$
 where a cochain $\varphi\in \mathcal{C}_{r,Hom}^n(A,V)$ is an $n$-linear map $\varphi : A^n \rightarrow V \; \hbox{satisfying}$ $$\beta \circ \varphi(x_1,...,x_{n})=\varphi\big(\alpha (x_1),\alpha(x_2),...,\alpha(x_{n})\big), \; \hbox{for all}\; x_1,x_2,...,x_{n} \in A.$$ and the $n$-coboundary operator $ \delta_{r,Hom}: \mathcal{C}^n_{r,Hom}(A,V)\rightarrow \mathcal{C}^{n+1}_{r,Hom}(A,V)$
 is given by :
 \begin{align*}
 \delta_{r,Hom}(\varphi)(x_1, \dots , x_{n+1})=&\alpha^{n-1}(x_1)\rhd  \varphi(x_2, \dots , x_{n+1})+\sum_{i=1}^n(-1)^i\varphi(\alpha(x_1), \dots , x_i\star x_{i+1}, \dots , \alpha(x_{n+1}))\\
 &+(-1)^{n+1}\varphi(x_1, \dots x_n)\lhd\alpha^{n-1} (x_{n+1})\\
 =&l(\alpha^{n-1}(x_1))Q(\varphi(x_2, \dots , x_{n+1}))+\sum_{i=1}^n(-1)^i\varphi(\alpha(x_1), \dots , x_i\cdot P(x_{i+1}), \dots \alpha(x_{n+1}))\\
 &+(-1)^{n+1}r(P(\alpha^{n-1}(x_{n+1})))\varphi(x_1, \dots, x_{n}).
 \end{align*}
  for any $\varphi\in \mathcal{C}^n_{r,Hom}(A,V)$ and $ x_1,\dots,x_{n+1}\in A$.

  Similarly, consider the Hochschild cochain complex of the algebra $(A,\diamond,\alpha)$ with coefficient in bimodule $(V,\blacktriangleright,\blacktriangleleft,\beta)$.
   Denote this cochain
 complex by $$\mathcal{C}_{l,Hom}   ^\bullet(A,V)=\bigoplus\limits_{i=0}^\infty \mathcal{C}_{l,Hom}^n(A,V),$$
 where a cochain $\psi\in \mathcal{C}_{l,Hom}^n(A,V)$ is an $n$-linear map $\varphi : A^n \rightarrow V \; \hbox{satisfying}$ $$\beta \circ \psi(x_1,...,x_{n})=\psi\big(\alpha (x_1),\alpha(x_2),...,\alpha(x_{n})\big), \; \hbox{for all}\; x_1,x_2,...,x_{n} \in A.$$ and the $n$-coboundary operator $ \delta_{l,Hom}: \mathcal{C}^n_{l,Hom}(A,V)\rightarrow \mathcal{C}^{n+1}_{l,Hom}(A,V)$
 is given by :
 \begin{align*}
 \delta_{l,Hom}(\psi)(x_1, \dots , x_{n+1})=&\alpha^{n-1}(x_1)\blacktriangleright  \psi(x_2, \dots , x_{n+1})+\sum_{i=1}^n(-1)^i\psi(\alpha(x_1), \dots , x_i\diamond x_{i+1}, \dots , \alpha(x_{n+1}))\\
 &+(-1)^{n+1}\psi(x_1, \dots x_n)\blacktriangleleft\alpha^{n-1} (x_{n+1})\\
 =&l(P(\alpha^{n-1}(x_1)))\psi(x_2, \dots , x_{n+1})+\sum_{i=1}^n(-1)^i\psi(\alpha(x_1), \dots , P(x_i)\cdot x_{i+1}, \dots \alpha(x_{n+1}))\\
 &+(-1)^{n+1}r(\alpha^{n-1}(x_{n+1}))Q(\psi(x_1, \dots, x_{n})).
 \end{align*}
  for any $\psi\in \mathcal{C}^n_{l,Hom}(A,V)$ and $ x_1,\dots,x_{n+1}\in A$.

Identifying $\mathcal{C}^0_{r,Hom}(A,V)$ with $\mathcal{C}^0_{l,Hom}(A,V)$, identifying $\mathcal{C}^1_{l,Hom}(A,V)$ with $\mathcal{C}^1_{r,Hom}(A,V)$ and defining for $n\geqslant 2$, $\mathfrak{C}^n_{Hom}=\mathcal{C}^n_{r,Hom}(A,V)\bigoplus\mathcal{C}^n_{l,Hom}(A,V)$ ,  we get the following complex:

 $$\xymatrixcolsep{3.5pc}\xymatrix{
 	 \Hom(\mathbb{K},V)\ar[r]^-{\delta_0}&
	\Hom(A,V)\ar[r]^-{\begin{pmatrix}\delta_r\\\delta_l\end{pmatrix}}& {\begin{matrix}C^2_r(A,V)\\ \bigoplus\\
 	C^2_l(A,V)\end{matrix}}\ar[r]^-{\begin{pmatrix}\delta_r&0\\0&\delta_l\end{pmatrix}}& {\cdots\cdots\begin{matrix}C^n_r(A,V)\\ \bigoplus\\
 	C^n_l(A,V)\end{matrix}}\ar[r]^-{\begin{pmatrix}\delta_r&0\\0&\delta_l\end{pmatrix}}&{\begin{matrix}C^{n+1}_r(A,V)\\ \bigoplus\\
 	C^{n+1}_l(A,V)\end{matrix}\cdots\cdots}
 },$$

}

In this section, we introduce a generalization of Jordan dialgebra by twisting their identities.
We show that this
generalization fits with Hom-associative algebras.
We begin by introducing  Hom-Jordan dialgebras.
\begin{defi}
A (left) Hom-Jordan dialgebra is a vector space $A$ equipped with a binary operation $\bullet$ and a linear map $\a:A \to A$ satisfying
{\small
\begin{align}
&\hspace{4cm}(x_1\bullet x_2)\bullet x_3 = (x_2\bullet x_1)\bullet x_3, \label{HJL0}\\
 \nonumber &((x_4\bullet  x_3)\bullet \alpha(x_2))\bullet \alpha^2(x_1)
+ \alpha^2(x_4)\bullet (\alpha(x_2)\bullet (x_3\bullet  x_1))
+ \alpha^2(x_3)\bullet (\alpha(x_2)\bullet (x_4\bullet  x_1))   \\
 =& \alpha(x_4\bullet  x_3)\bullet(\alpha(x_2)\bullet \alpha( x_1))
 + (\alpha(x_4)\bullet  \alpha(x_2))\bullet\alpha(x_3\bullet  x_1)
 +(\alpha (x_3)\bullet  \alpha(x_2))\bullet\alpha(x_4\bullet  x_1) ,\label{HJL1}\\\nonumber
&
  \alpha^2(x_1)\bullet ((x_4\bullet x_3)\bullet  x_2)
+ \alpha^2(x_4)\bullet ((x_3\bullet  x_1)\bullet  x_2)
+ \alpha^2(x_3)\bullet ((x_4\bullet x_1)\bullet  x_2)   \\
 = &\alpha(x_4\bullet  x_3)\bullet (\alpha(x_1)\bullet  x_2)
 + \alpha(x_1\bullet x_3)\bullet(\alpha (x_4 )\bullet x_2)
 + \alpha(x_4\bullet x_1)\bullet(\alpha(x_3)\bullet x_2).\label{HJL2}
\end{align}}
\end{defi}
When the twisting map $\alpha$ is the identity map, we recover the classical notion of Jordan dialgebra.
\begin{rmk}
Note that identities \eqref{HJL1} and \eqref{HJL2} are equivalent to
\begin{align}
   (x^2,\a(y),\a(z))_\a=& 2(\a(x),\a(y),x\bullet z)_\a,\\
    \a^2(x)\bullet (x^2 \bullet y)=& \a(x^2) \bullet (\a(x) \bullet y),
\end{align}
for any $x,y,z \in J$, where $x^2=x \bullet x$ and $(x,y,z)_\a=(x \bullet y)\bullet \a(z)-\a(x) \bullet (y \bullet z)$.
\end{rmk}

Let $(A,V)$ be a \textsf{HJor-Rep} pair.
\begin{defi}
       A linear map $\mathcal K: V \to A$ is called a relative averaging operator on $A$ with respect to $(V,\pi,\b)$ if $\mathcal K \b =\a \mathcal K$ and
      \begin{align}
  & \mathcal K(u)\o \mathcal K(v)=\mathcal K(\pi(\mathcal K(u))v),\ \forall u,v \in V.
      \end{align}
\end{defi}
On the direct sum vector space $A\oplus V$,  define the two maps $\alpha_{A\oplus V}$ and $\bullet_{A\oplus V}$    by
\begin{eqnarray}
    \alpha_{A\oplus V}(x+u)&=&\alpha(x)+\beta(u),\\(x+u)\bullet_{A\oplus V}(y+v)&=&x\o y+\r(x)v.
\end{eqnarray}

\begin{pro}
  With the above notations, $(A\oplus V,\bullet_{A\oplus V},\ \alpha_{A\oplus V})$ is Hom-Jordan dialgebra. 
\end{pro}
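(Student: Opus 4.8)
The plan is to verify directly the three defining identities \eqref{HJL0}, \eqref{HJL1} and \eqref{HJL2} of a Hom-Jordan dialgebra, together with multiplicativity of the twist map, by writing a general element of $A\oplus V$ as $x+u$ with $x\in A$ and $u\in V$, substituting it into each identity, and splitting the resulting equation into its $A$-component and its $V$-component. Two elementary observations organise the computation. First, $\alpha_{A\oplus V}$ is multiplicative for $\bullet_{A\oplus V}$: indeed $\alpha_{A\oplus V}((x+u)\bullet_{A\oplus V}(y+v)) = \alpha(x\o y) + \beta(\pi(x)v) = \alpha(x)\o\alpha(y) + \pi(\alpha(x))\beta(v) = \alpha_{A\oplus V}(x+u)\bullet_{A\oplus V}\alpha_{A\oplus V}(y+v)$, where we used multiplicativity of $\alpha$ on $A$ and the first identity of \eqref{representation}. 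Second, in any $\bullet_{A\oplus V}$-product the $A$-component depends only on the $A$-components of the factors, whereas the $V$-component of $(x+u)\bullet_{A\oplus V}(y+v)$ is $\pi(x)v$; consequently, in an iterated product the $V$-component is a string of operators $\pi(-)$ applied to the $V$-component of the single innermost-right leaf, every other $V$-component being discarded.

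First I would dispose of \eqref{HJL0}: substituting $x_i=a_i+u_i$, the two sides have $A$-components $(a_1\o a_2)\o a_3$ and $(a_2\o a_1)\o a_3$ and $V$-components $\pi(a_1\o a_2)u_3$ and $\pi(a_2\o a_1)u_3$, so \eqref{HJL0} is a consequence of the commutativity of $\o$ on $A$.

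Next I would treat \eqref{HJL1} and \eqref{HJL2} componentwise. The $A$-component of each is precisely the corresponding Hom-Jordan dialgebra identity for $(A,\o,\alpha)$ with $\bullet=\o$; since $(A,\o,\alpha)$ is a Hom-Jordan algebra and the ground field $\mathbb K$ has characteristic zero, these hold --- this is the Hom-analogue of the classical fact (cf. \cite{Kolesnikov}) that a Jordan algebra, equipped with its product as $\bullet$, is a Jordan dialgebra, just as in the associative case (Example \ref{ex}). For the $V$-component I would substitute $x_i=a_i+u_i$ and apply the second observation above: in \eqref{HJL1} the innermost-right leaf of every monomial is an $\alpha$-iterate of $x_1$, so only $u_1$ survives, and, after rewriting $\pi$ of a product by multiplicativity and moving $\beta$ past $\pi$ via \eqref{representation}, the resulting equality of operators applied to $u_1$ is exactly identity \eqref{RepHomJor2} with $(x,y,z)$ specialised to $(x_4,x_3,x_2)$; similarly, in \eqref{HJL2} only $u_2$ survives and the corresponding operator equality applied to $u_2$ is exactly \eqref{RepHomJor1}. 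Assembling the $A$- and $V$-components gives all three identities, so $(A\oplus V,\bullet_{A\oplus V},\alpha_{A\oplus V})$ is a Hom-Jordan dialgebra.

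The main obstacle is this last step. Expanding \eqref{HJL1} and \eqref{HJL2} after the substitution is lengthy, and the delicate point is to keep exact track of how many times $\alpha$ (on the $A$-part) and $\beta$ (on the $V$-part) are applied to each monomial, so that the surviving $V$-component really does collapse onto a single operator equation that coincides, after the reorderings of summands forced by the commutativity of $\o$ and a relabelling of the four variables, with \eqref{RepHomJor2}, respectively \eqref{RepHomJor1}. The bookkeeping of which $u_i$ disappear --- because $\bullet_{A\oplus V}$ reads only the $V$-component of its right argument --- is routine but easy to mishandle.
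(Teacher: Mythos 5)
Your proposal is correct and follows essentially the same route as the paper's proof: expand both sides of \eqref{HJL0}--\eqref{HJL2} for elements $x_i+u_i$ of the hemisemi-direct product, observe that the $V$-components collapse onto the single surviving vector ($u_1$ for \eqref{HJL1}, $u_2$ for \eqref{HJL2}) and match the representation axioms \eqref{RepHomJor2}, respectively \eqref{RepHomJor1}, while \eqref{HJL0} and the $A$-components follow from commutativity and the Hom-Jordan structure of $(A,\circ,\alpha)$. The only difference is one of explicitness: the paper writes out the expansion of \eqref{HJL1} and invokes \eqref{RepHomJor2}, leaving the $A$-component and the remaining identities as "analogous", whereas you spell out the characteristic-zero linearization argument showing that $(A,\circ,\alpha)$ with $\bullet=\circ$ already satisfies the dialgebra identities, which is a useful clarification rather than a different method.
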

This Hom-Jordan dialgebra is called the  hemisemi-direct product Hom-Jordan dialgebra and denoted by $A\oplus_{DiJor} V$.
\begin{proof}
 For any $x_1,x_2,x_3,x_4\in A$ and $u_1,u_2,u_3,u_4\in V$, we have
{ \small\begin{align*}
&\big(((x_4+u_4)\bullet_{A\oplus V}(x_3+u_3))\bullet_{A\oplus V}\alpha_{A\oplus V}(x_2+u_2)\big)\bullet_{A\oplus V}\alpha^{2}_{A\oplus V}(x_1+u_1)\\&+\alpha^2_{A\oplus V}(x_4+u_4)\bullet_{A\oplus V}\big(\alpha_{A\oplus V}(x_2+u_2)\bullet_{A\oplus V}((x_3+u_3)\bullet_{A\oplus V}(x_1+u_1))\big)\\&+\alpha^2_{A\oplus V}(x_3+u_3)\bullet_{A\oplus V}\big(\alpha_{A\oplus V}(x_2+u_2)\bullet_{A\oplus V}((x_4+u_4)\bullet_{A\oplus V}(x_1+u_1))\big)\\=&\big((x_4\circ x_3)\circ \a(x_2)+\pi(x_4\circ x_3)\b(u_2)\big)\bullet_{A\oplus V}(\a^2(x_1)+\b^2(u_1))\\&+(\a^2(x_4)+\b^2(u_4))\bullet_{A\oplus V}\big(\a(x_2)\circ(x_3\circ x_1)+\pi(\a(x_2))\pi(x_3)u_1\big)\\&+(\a^2(x_3)+\b^2(u_3))\bullet_{A\oplus V}\big(\a(x_2)\circ(x_4\circ x_1)+\pi(\a(x_2))\pi(x_4)u_1\big)\\=&\big((x_4\circ x_3)\circ\a(x_2)\big)\a^2(x_1)+\pi((x_4\circ x_3)\circ \a(x_2))\b^2(u_1)\\&+\a^2(x_4)\circ\big(\a(x_2)\circ(x_3\circ x_1)\big)+\pi(\a^2(x_4))(\pi(\a(x_2))\pi(x_3)u_1)\\&+\a^2(x_3)\circ\big(\a(x_2)\circ(x_4\circ x_1)\big)+\pi(\a^2(x_3))(\pi(\a(x_2))\pi(x_4)u_1).
 \end{align*}}
On the other hand, we have
{\small\begin{align*}
    &\a_{A\oplus V}((x_4+u_4)\bullet_{A\oplus V}(x_3+u_3))\bullet_{A\oplus V}(\a_{A\oplus V}(x_2+u_2)\bullet_{A\oplus V}\a_{A\oplus V}(x_1+u_1))\\&+(\a_{A\oplus V}(x_4+u_4)\bullet_{A\oplus V}\a_{A\oplus V}(x_2+u_2))\bullet_{A\oplus V}\a_{A\oplus V}((x_3+u_3)\bullet_{A\oplus V}(x_1+u_1))\\&+(\a_{A\oplus V}(x_3+u_3)\bullet_{A\oplus V}\a_{A\oplus V}(x_2+u_2))\bullet_{A\oplus V}\a_{A\oplus V}((x_4+u_4)\bullet_{A\oplus V}(x_1+u_1))\\=&\a(x_4\circ x_3)\circ(\a(x_2)\circ \a(x_1))+\pi(\a(x_4\circ x_3))(\pi(\a(x_2))\b(u_1))\\&+(\a(x_4)\circ\a(x_2))\circ\a(x_3\circ x_1)+\pi(\a(x_4)\circ\a(x_2))(\pi(\a(x_3))\b(u_1))\\&+(\a(x_3)\circ\a(x_2))\circ\a(x_4\circ x_1)+\pi(\a(x_3)\circ\a(x_2))(\pi(\a(x_4))\b(u_1)).
\end{align*}}
Then, according to Eq. \eqref{RepHomJor2}, \eqref{HJL1} is satisfied. Analogously, one can check \eqref{HJL0} and \eqref{HJL2}.
Thus, $A\oplus_{DiJor} V$ is a Hom-Jordan dialgebra.
\end{proof}
 \begin{thm}
Let $(A,V)$ be a \textsf{HJor-Rep} pair. A linear map $\mathcal{K}:V\to A$ is a relative averaging operator on the Hom-Jordan algebra $(A,\o,\alpha)$ if and only if  the graph $Gr(\mathcal{K}) = \{\mathcal{K}u+u\ |\ u \in V\}$ is a Hom-Jordan subdialgebra of the hemisemi-direct product $A\oplus_{DiJor}V$.
\end{thm}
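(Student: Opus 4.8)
The plan is to imitate the proofs of Theorems~\ref{graphHass} and \ref{graphHLie}. Since the hemisemi-direct product $A\oplus_{DiJor}V$ is a Hom-Jordan dialgebra, i.e.\ a structure carrying a \emph{single} binary operation $\bullet_{A\oplus V}$ together with the twisting map $\alpha_{A\oplus V}$, a linear subspace of it is a Hom-Jordan subdialgebra exactly when it is closed under $\bullet_{A\oplus V}$ and under $\alpha_{A\oplus V}$ (the defining identities \eqref{HJL0}--\eqref{HJL2} are multilinear, so they descend to any such subspace automatically). Thus the whole content of the statement is to translate these two closure conditions on $Gr(\mathcal K)=\{\mathcal K u+u\mid u\in V\}$ into the axioms of a relative averaging operator.

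First I would treat the twisting map. For $u\in V$ one has $\alpha_{A\oplus V}(\mathcal K u+u)=\alpha(\mathcal K u)+\beta(u)$, and this vector lies in $Gr(\mathcal K)$ if and only if it equals $\mathcal K(\beta u)+\beta u$, that is, if and only if $\alpha\circ\mathcal K=\mathcal K\circ\beta$. Hence $Gr(\mathcal K)$ is $\alpha_{A\oplus V}$-invariant precisely when $\mathcal K\beta=\alpha\mathcal K$.

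Next I would compute the product of two elements of the graph. By the definition of $\bullet_{A\oplus V}$, for $u,v\in V$ we get $(\mathcal K u+u)\bullet_{A\oplus V}(\mathcal K v+v)=\mathcal K u\o\mathcal K v+\pi(\mathcal K u)v$. Since a vector $x+w\in A\oplus V$ belongs to $Gr(\mathcal K)$ iff $x=\mathcal K(w)$, this product lies in $Gr(\mathcal K)$ for all $u,v$ if and only if $\mathcal K u\o\mathcal K v=\mathcal K(\pi(\mathcal K u)v)$ for all $u,v\in V$, which is exactly the defining identity of a relative averaging operator on the \textsf{HJor-Rep} pair $(A,V)$. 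Combining the two steps gives the equivalence: $Gr(\mathcal K)$ is a Hom-Jordan subdialgebra of $A\oplus_{DiJor}V$ if and only if $\mathcal K$ is a relative averaging operator.

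Finally, for the implication ``$\mathcal K$ relative averaging $\Rightarrow$ $Gr(\mathcal K)$ is a subdialgebra'' one still has to note that closure is enough: since $A\oplus_{DiJor}V$ is already a Hom-Jordan dialgebra (preceding Proposition) and the identities \eqref{HJL0}--\eqref{HJL2} are identities between multilinear expressions in $\bullet_{A\oplus V}$ and $\alpha_{A\oplus V}$, they restrict verbatim to the $\bullet_{A\oplus V}$- and $\alpha_{A\oplus V}$-closed subspace $Gr(\mathcal K)$, which therefore inherits the Hom-Jordan dialgebra structure. I do not expect any real obstacle here; the only thing to be careful about is the bookkeeping that ``subdialgebra'' for a Hom-Jordan dialgebra means nothing more than stability under the one product and the one twisting map, after which the argument reduces to the same two short computations as in the associative and Lie cases.
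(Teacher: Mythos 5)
Your proposal is correct and follows the same route the paper intends: the paper dismisses this proof as ``straightforward'' precisely because it is the computation you carry out, namely $(\mathcal K u+u)\bullet_{A\oplus V}(\mathcal K v+v)=\mathcal K u\o\mathcal K v+\pi(\mathcal K u)v$, whose membership in $Gr(\mathcal K)$ is exactly the relative averaging identity, mirroring the argument of Theorem~\ref{graphHass}. Your explicit treatment of closure under $\alpha_{A\oplus V}$ (giving $\mathcal K\beta=\alpha\mathcal K$) and the remark that the multilinear identities \eqref{HJL0}--\eqref{HJL2} restrict to any closed subspace are correct and, if anything, slightly more careful than the paper's implicit argument.
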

The proof is straightforward.
\begin{pro}
    Let $(A,V)$ be a \textsf{HJor-Rep} pair. A linear map $\mathcal K: V \to A$ is a relative averaging operator on $V$ over $A$ if and only if the map
$$N_{\mathcal K}: (A \oplus V) \to (A \oplus V),\ x+u \mapsto \mathcal K(u)$$
is a Nijenhuis operator on the  hemisemi-direct product $A \oplus_{DiJor} V$.
\end{pro}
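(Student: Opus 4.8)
Here is a proof plan.

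\medskip

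\noindent\textbf{Proof plan.}
The plan is to unwind the definition of a Nijenhuis operator directly on the hemisemi-direct product $A\oplus_{DiJor} V$, which by the preceding proposition is a genuine Hom-Jordan dialgebra and in particular a vector space equipped with one bilinear operation $\bullet_{A\oplus V}$ and a twist $\alpha_{A\oplus V}$, so that the notion of a Nijenhuis operator (for $\mu = \bullet_{A\oplus V}$) makes sense. The two structural facts I would record first are that $N_{\mathcal{K}}$ has image contained in $A$ and vanishes identically on $A$; consequently $N_{\mathcal{K}}^2 = 0$, and $N_{\mathcal{K}}$ annihilates every element of $A\oplus V$ whose $V$-component is zero. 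These two observations are what make the computation collapse.

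First I would check compatibility with the twist: for $x+u \in A\oplus V$ one has $N_{\mathcal{K}}(\alpha_{A\oplus V}(x+u)) = N_{\mathcal{K}}(\alpha(x)+\beta(u)) = \mathcal{K}(\beta(u))$, while $\alpha_{A\oplus V}(N_{\mathcal{K}}(x+u)) = \alpha_{A\oplus V}(\mathcal{K}(u)) = \alpha(\mathcal{K}(u))$ since $\mathcal{K}(u)\in A$. Hence $N_{\mathcal{K}}\alpha_{A\oplus V} = \alpha_{A\oplus V} N_{\mathcal{K}}$ if and only if $\mathcal{K}\beta = \alpha\mathcal{K}$. Next I would evaluate the Nijenhuis defect identity
\[
\mu(N(X),N(Y)) = N\big(\mu(N(X),Y) + \mu(X,N(Y)) - N\mu(X,Y)\big)
\]
with $\mu = \bullet_{A\oplus V}$, $N = N_{\mathcal{K}}$, on $X = x+u$, $Y = y+v$. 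Since $N_{\mathcal{K}}(X) = \mathcal{K}(u)$ and $N_{\mathcal{K}}(Y) = \mathcal{K}(v)$ lie in $A$, the left-hand side is $\mathcal{K}(u)\bullet_{A\oplus V}\mathcal{K}(v) = \mathcal{K}(u)\o\mathcal{K}(v) \in A$. On the right-hand side, among the three terms inside $N_{\mathcal{K}}(\cdots)$ one has $N_{\mathcal{K}}(X)\bullet_{A\oplus V} Y = \mathcal{K}(u)\o y + \pi(\mathcal{K}(u))v$ with $V$-component $\pi(\mathcal{K}(u))v$, whereas $X\bullet_{A\oplus V} N_{\mathcal{K}}(Y) = x\o\mathcal{K}(v)$ and $N_{\mathcal{K}}(X\bullet_{A\oplus V} Y) = \mathcal{K}(\pi(x)v)$ both lie in $A$. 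Applying the outer $N_{\mathcal{K}}$ kills every summand landing in $A$ and sends the surviving $V$-part $\pi(\mathcal{K}(u))v$ to $\mathcal{K}(\pi(\mathcal{K}(u))v)$, so the Nijenhuis identity is equivalent to $\mathcal{K}(u)\o\mathcal{K}(v) = \mathcal{K}(\pi(\mathcal{K}(u))v)$ for all $u,v\in V$. Combined with the twist compatibility, this is exactly the definition of a relative averaging operator on the \textsf{HJor-Rep} pair $(A,V)$, and since each reduction above is an equivalence, both implications follow at once.

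There is no genuine obstacle here: the hemisemi-direct product $A\oplus_{DiJor} V$ is rigged precisely so that the two ``$A$-valued'' pieces of the Nijenhuis defect are annihilated by $N_{\mathcal{K}}$ and only one term remains, reproducing the relative averaging relation. The single point that requires a bit of care is the bookkeeping of the $A$- versus $V$-components in the three terms of the Nijenhuis defect, together with the systematic use of $N_{\mathcal{K}}|_A = 0$; the verification of the twist compatibility and the identification of the surviving term are then immediate.
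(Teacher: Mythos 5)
Your proposal is correct, and since the paper states this proposition without proof (its associative analogue is merely labelled ``straightforward''), your direct verification is exactly the intended argument: the twist condition reduces to $\mathcal{K}\beta=\alpha\mathcal{K}$, and because $N_{\mathcal{K}}$ vanishes on $A$ the Nijenhuis defect collapses to the single surviving $V$-component $\pi(\mathcal{K}(u))v$, making the identity equivalent to $\mathcal{K}(u)\circ\mathcal{K}(v)=\mathcal{K}(\pi(\mathcal{K}(u))v)$. No gaps; both directions follow since each reduction is an equivalence.
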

\begin{thm}\label{JortodiJor}
Let $\mathcal{K}:V\to A$ be a relative averaging operator on the Hom-Jordan algebra $(A,\o, \alpha )$
with respect to the module $(V,\pi,\b)$ .  Then $(V,\bullet, \beta )$
 is a Hom-Jordan dialgebra, where 
$$u\bullet_\mathcal{K} v =\pi(\mathcal{K}(u))v,\quad \forall \ u,v\in V.$$\end{thm}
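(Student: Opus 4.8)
The plan is to avoid expanding the three defining identities \eqref{HJL0}--\eqref{HJL2} for $\bullet_{\mathcal K}$ by hand, and instead to obtain the result by transporting the Hom-Jordan dialgebra structure of the hemisemi-direct product along the graph of $\mathcal{K}$. Recall that the hemisemi-direct product $A\oplus_{DiJor}V$ was already shown above to be a Hom-Jordan dialgebra, with product $(x+u)\bullet_{A\oplus V}(y+v)=x\o y+\pi(x)v$ and twist $\alpha_{A\oplus V}(x+u)=\alpha(x)+\beta(u)$. Since $\mathcal{K}$ is a relative averaging operator on the \textsf{HJor-Rep} pair $(A,V)$, the graph characterization established above gives that $Gr(\mathcal{K})=\{\mathcal{K}u+u\mid u\in V\}$ is a Hom-Jordan subdialgebra of $A\oplus_{DiJor}V$; in particular $(Gr(\mathcal{K}),\bullet_{A\oplus V},\alpha_{A\oplus V})$ is itself a Hom-Jordan dialgebra, the identities being inherited from the ambient one.

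Next I would introduce the linear map $\iota\colon V\to Gr(\mathcal{K})$, $u\mapsto\mathcal{K}u+u$, which is a linear isomorphism (its inverse is the restriction to $Gr(\mathcal{K})$ of the projection onto the $V$-factor). Using $\mathcal{K}\beta=\alpha\mathcal{K}$ one checks $\alpha_{A\oplus V}(\iota(u))=\alpha(\mathcal{K}u)+\beta(u)=\mathcal{K}(\beta(u))+\beta(u)=\iota(\beta(u))$, so $\iota$ intertwines $\beta$ with $\alpha_{A\oplus V}$. Moreover, for $u,v\in V$,
\[
\iota(u)\bullet_{A\oplus V}\iota(v)=\mathcal{K}u\o\mathcal{K}v+\pi(\mathcal{K}u)v=\mathcal{K}(\pi(\mathcal{K}u)v)+\pi(\mathcal{K}u)v=\iota\bigl(\pi(\mathcal{K}u)v\bigr)=\iota(u\bullet_{\mathcal K}v),
\]
where the middle equality is exactly the defining relation $\mathcal{K}u\o\mathcal{K}v=\mathcal{K}(\pi(\mathcal{K}u)v)$ of a relative averaging operator. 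Hence $\iota$ is an isomorphism of Hom-algebras from $(V,\bullet_{\mathcal K},\beta)$ onto the Hom-Jordan dialgebra $Gr(\mathcal{K})$.

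Finally, since the identities \eqref{HJL0}, \eqref{HJL1}, \eqref{HJL2} are multilinear in the product and are compatible with the twisting map, they are preserved under any isomorphism of Hom-algebras; pulling the Hom-Jordan dialgebra axioms of $Gr(\mathcal{K})$ back along $\iota$ shows that $(V,\bullet_{\mathcal K},\beta)$ satisfies all three, so it is a Hom-Jordan dialgebra, as claimed.

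I do not expect a serious obstacle here: the one genuine point is verifying that $\iota$ is a morphism of the relevant Hom-structure, which is the short computation displayed above. The alternative route — checking \eqref{HJL0}--\eqref{HJL2} directly for $u\bullet_{\mathcal K}v=\pi(\mathcal{K}u)v$, replacing each $\mathcal{K}u\o\mathcal{K}v$ by $\mathcal{K}(\pi(\mathcal{K}u)v)$ and invoking the representation axioms \eqref{RepHomJor1}--\eqref{RepHomJor2} together with $\beta\pi(x)=\pi(\alpha(x))\beta$ and $\pi(x\o y)\beta=\dots$ — is exactly the (lengthy) bookkeeping already carried out in the proof that $A\oplus_{DiJor}V$ is a Hom-Jordan dialgebra, and the graph argument is precisely the device that lets us reuse it rather than repeat it.
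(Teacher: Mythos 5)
Your argument is correct, but it is not the route the paper takes. The paper proves Theorem \ref{JortodiJor} by brute force: it verifies \eqref{HJL0} directly from the commutativity of $\circ$ and the defining relation $\mathcal{K}(u)\circ\mathcal{K}(v)=\mathcal{K}(\pi(\mathcal{K}u)v)$, and then expands both sides of the remaining twisted identities for $u\bullet_{\mathcal K}v=\pi(\mathcal{K}u)v$, reducing them to the representation axioms \eqref{RepHomJor1} and \eqref{RepHomJor2} --- essentially repeating the bookkeeping already done in the proof that $A\oplus_{DiJor}V$ is a Hom-Jordan dialgebra. You instead transport the structure: you show $\iota(u)=\mathcal{K}u+u$ is a linear isomorphism of $V$ onto $Gr(\mathcal{K})$ intertwining $\beta$ with $\alpha_{A\oplus V}$ and sending $u\bullet_{\mathcal K}v$ to $\iota(u)\bullet_{A\oplus V}\iota(v)$ (your displayed computation is exactly the closure computation behind the graph theorem, so your proof does not even depend on that theorem's unproved ``straightforward'' step), and then pull the identities \eqref{HJL0}--\eqref{HJL2} back along $\iota$, which is legitimate since they are polynomial identities in the product and the twist. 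What your approach buys is economy and conceptual clarity --- the only structural input is the hemisemi-direct product proposition, and no new multilinear expansion is needed; what the paper's direct computation buys is independence from that proposition (whose own proof leaves two of the three identities to the reader), so each argument ultimately rests on one instance of the same lengthy verification. There is no circularity in your version, since the hemisemi-direct product result is established in the paper before and independently of Theorem \ref{JortodiJor}.
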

\begin{proof}
   Let $u_1,u_2,u_3\in V$, since $"\o"$ is commutative then we have
   \begin{align*}
       (u_1\bullet_\mathcal{K} u_2)\bullet_\mathcal{K} u_3-(u_2\bullet_\mathcal{K} u_1)\bullet_\mathcal{K} u_3&=\r(\mathcal{K}(\r(\mathcal{K}(u_1))(u_2)))(u_3)-\r(\mathcal{K}(\r(\mathcal{K}(u_2))(u_1)))(u_3)\\&=\r\big(\mathcal{K}(u_1)\o\mathcal{K}(u_2)-\mathcal{K}(u_2)\o\mathcal{K}(u_1))\big)(u_3)=0.
   \end{align*}

   Therefore  \eqref{HJL0} is satisfied. Now,
   let $u_1,u_2,u_3,u_4\in V$
   \begin{align*}
       &((u_4\bullet_\mathcal{K}  u_3)\bullet_\mathcal{K} \beta(u_2))\bullet_\mathcal{K} \beta^2(u_1)
+ \beta^2(u_4)\bullet_\mathcal{K} (\beta(u_2)\bullet_\mathcal{K} (u_3\bullet_\mathcal{K}  u_1))
+ \beta^2(u_3)\bullet_\mathcal{K} (\beta(u_2)\bullet_\mathcal{K} (u_4\bullet_\mathcal{K}  u_1))   \\
 =&\r\big(\mathcal{K}(\r(\mathcal{K}(\r(\mathcal{K}(u_4))(u_3)))(\beta(u_2)))\big)(\beta^2(u_1))+\r(\mathcal{K}(\beta^2(u_4)))\big(\r(\mathcal{K}(\beta(u_2)))\big)\big(\r(\mathcal{K}(u_3))(u_1)\big) \\
&+\r(\mathcal{K}(\beta^2(u_3)))\big(\r(\mathcal{K}(\beta(u_2)))\big)\big(\r(\mathcal{K}(u_4))(u_1)\big)\\=&\r\big(\mathcal{K}(\r(\mathcal{K}u_4)(u_3))\o\alpha(\mathcal{K}u_2)\big)(\beta^2(u_1))+\r(\alpha^2(\mathcal{K}u_4))\big(\r(\alpha(\mathcal{K}u_2))\big)\big(\r(\mathcal{K}u_3)(u_1)\big) \\
&+\r(\alpha^2(\mathcal{K}u_3))\big(\r(\alpha(\mathcal{K}u_2))\big)\big(\r(\mathcal{K}u_4)(u_1)\big) \\=&\r\big((\mathcal{K}u_4\o\mathcal{K}u_3)\o\alpha(\mathcal{K}u_2)\big)(\beta^2(u_1))+\r(\alpha^2(\mathcal{K}u_4))\big(\r(\alpha(\mathcal{K}u_2))\big)\big(\r(\mathcal{K}u_3)(u_1)\big) \\
&+\r(\alpha^2(\mathcal{K}u_3))\big(\r(\alpha(\mathcal{K}u_2))\big)\big(\r(\mathcal{K}u_4)(u_1)\big) .
   \end{align*}
   On the other hand we have
   \begin{align*}
       &\beta(u_4\bullet_\mathcal{K} u_3)\bullet_\mathcal{K}(\beta(u_2)\bullet_\mathcal{K}\beta(u_1))+(\beta(u_4)\bullet_\mathcal{K}\beta(u_2))\bullet_\mathcal{K}\beta(u_3\bullet_\mathcal{K}u_1)+(\beta(u_3)\bullet_\mathcal{K}\beta(u_2))\bullet_\mathcal{K}\beta(u_4\bullet_\mathcal{K}u_1)\\=&\r\big(\mathcal{K}(\beta(\r(\mathcal{K}u_4)(u_3)))\big)\big(\r(\mathcal{K}(\beta(u_2)))\beta(u_1)\big)
      +\r\big(\mathcal{K}(\r(\mathcal{K}(\beta(u_4)))(\beta(u_2))\big)\big(\beta(\r(\mathcal{K}u_3(u_1)))\big)\\&+\r\big(\mathcal{K}(\r(\mathcal{K}(\beta(u_3)))(\beta(u_2))\big)\big(\beta(\r(\mathcal{K}u_4(u_1)))\big)\\=&\r\big(\alpha(\mathcal{K}(\r(\mathcal{K}u_4)(u_3)))\big)\big(\r(\alpha(\mathcal{K}u_2))(\beta(u_1))\big)+\r\big(\mathcal{K}(\r(\mathcal{K}(\beta(u_4)))(\beta(u_2))\big)\big(\r(\alpha(\mathcal{K}u_3))(u_1)\big)\\&+\r\big(\mathcal{K}(\r(\mathcal{K}(\beta(u_3)))(\beta(u_2))\big)\big(\r(\alpha(\mathcal{K}u_4))(u_1)\big)\\=&\r\big(\alpha(\mathcal{K}u_4)\o\alpha(\mathcal{K}u_3)\big)\big(\r(\alpha(\mathcal{K}u_2))(\beta(u_1))\big)+\r\big(\mathcal{K}(\beta(u_4))\o\mathcal{K}(\beta(u_2))\big)\big(\r(\alpha(\mathcal{K}u_3))(u_1)\big)\\&+\r\big(\mathcal{K}(\beta(u_3))\o\mathcal{K}(\beta(u_2))\big)\big(\r(\alpha(\mathcal{K}u_4))(u_1)\big)\\=&\r\big(\alpha(\mathcal{K}u_4)\o\alpha(\mathcal{K}u_3)\big)\big(\r(\alpha(\mathcal{K}u_2))(\beta(u_1))\big)+\r\big(\alpha(\mathcal{K}(u_4))\o\alpha(\mathcal{K}(u_2))\big)\big(\r(\alpha(\mathcal{K}u_3))(\beta(u_1))\big)\\&+\r\big(\alpha(\mathcal{K}(u_3))\o\alpha(\mathcal{K}(u_2))\big)\big(\r(\alpha(\mathcal{K}u_4))(\beta(u_1))\big).
   \end{align*}
   Thus, according to Eq. \eqref{RepHomJor2}, we get
   \begin{multline*}
  ((u_4\bullet_\mathcal{K}  u_3)\bullet_\mathcal{K} \beta(u_2))\bullet_\mathcal{K} \beta^2(u_1)
+ \beta^2(u_4)\bullet_\mathcal{K} (\beta(u_2)\bullet_\mathcal{K} (u_3\bullet_\mathcal{K}  u_1))
+ \beta^2(u_3)\bullet_\mathcal{K} (\beta(u_2)\bullet_\mathcal{K} (u_4\bullet_\mathcal{K}  u_1))     \\
 = \beta(u_4\bullet_\mathcal{K} u_3)\bullet_\mathcal{K}(\beta(u_2)\bullet_\mathcal{K}\beta(u_1))+(\beta(u_4)\bullet_\mathcal{K}\beta(u_2))\bullet_\mathcal{K}\beta(u_3\bullet_\mathcal{K}u_1)+(\beta(u_3)\bullet_\mathcal{K}\beta(u_2))\bullet_\mathcal{K}\beta(u_4\bullet_\mathcal{K}u_1) .
\end{multline*}
Similarly, by Eq. \eqref{RepHomJor1}, we can check the identity \eqref{HJL1}.
\end{proof}

\begin{defi}
    Let $(A_1,\bullet_1,\alpha_1)$ and $(A_2,\bullet_2,\alpha_2)$ be two Hom-Jordan dialgebras. A linear map  $\phi:A_1\to A_2$ is called a morphism of Hom-Jordan dialgebras if it's satisfying, for all $x,y\in A_1$,

       $$\phi\alpha_1=\alpha_2 \phi\quad
       \text{and}\quad \phi(x\bullet_1y)=\phi(x)\bullet_2 \phi(y).$$

\end{defi}
The following theorem gives a procedure to construct a Hom-Jordan dialgebras using Hom-Jordan dialgebras and their algebra endomorphisms.
\begin{thm}
    Let $(A,\bullet,\alpha)$ be a Hom-Jordan dialgebra and $\phi:A\to A$ be a morphism then $$A_\phi=(A,\bullet_\phi=\phi\bullet,\phi\alpha)$$ is also a Hom-Jordan dialgebra.
\end{thm}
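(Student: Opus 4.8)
The plan is to verify directly that $A_\phi=(A,\bullet_\phi,\alpha_\phi)$, where $\alpha_\phi:=\phi\alpha$ and $x\bullet_\phi y:=\phi(x\bullet y)$, satisfies multiplicativity of the twisting map together with the three defining identities \eqref{HJL0}, \eqref{HJL1}, \eqref{HJL2}; this is the Hom-analogue of Yau's twisting principle for Hom-Jordan dialgebras. Throughout one uses only that $\phi$ is a morphism, i.e. $\phi\alpha=\alpha\phi$ and $\phi(x\bullet y)=\phi(x)\bullet\phi(y)$, and that $\alpha$ is multiplicative, $\alpha(x\bullet y)=\alpha(x)\bullet\alpha(y)$. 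An immediate consequence of the two morphism conditions is that $\phi$ commutes with every pure Hom-monomial $w$ built from $\bullet$ and $\alpha$: $\phi\big(w(z_1,\dots,z_k)\big)=w\big(\phi(z_1),\dots,\phi(z_k)\big)$.

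First I would dispose of the easy parts. For multiplicativity, unwinding $\alpha_\phi(x\bullet_\phi y)$ and $\alpha_\phi(x)\bullet_\phi\alpha_\phi(y)$ by means of the three relations above brings both to $\phi^2\big(\alpha(x)\bullet\alpha(y)\big)$. For \eqref{HJL0} one has $(x_1\bullet_\phi x_2)\bullet_\phi x_3=\phi\big((\phi(x_1)\bullet\phi(x_2))\bullet x_3\big)$, and \eqref{HJL0} for $A$ applied to $\phi(x_1),\phi(x_2),x_3$ rewrites the inside as $(\phi(x_2)\bullet\phi(x_1))\bullet x_3=\phi(x_2\bullet x_1)\bullet x_3$, whence $(x_1\bullet_\phi x_2)\bullet_\phi x_3=(x_2\bullet_\phi x_1)\bullet_\phi x_3$.

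The core of the proof is \eqref{HJL1} and \eqref{HJL2}, and the device is the following. For a monomial $M$ in $x_1,x_2,x_3,x_4$ built from $\bullet$ and $\alpha$, replacing $\bullet,\alpha$ by $\bullet_\phi,\alpha_\phi$ and pushing every occurrence of $\phi$ outward — at each product node pulling out the common power of $\phi$ contributed by the two subtrees via $\phi(a\bullet b)=\phi(a)\bullet\phi(b)$, and sliding $\phi$ past $\alpha$ via $\phi\alpha=\alpha\phi$ — collapses the twisted monomial to $\phi^{N}\big(M(\phi^{e_1}(x_1),\phi^{e_2}(x_2),\phi^{e_3}(x_3),\phi^{e_4}(x_4))\big)$, the pure monomial $M$ evaluated at suitable twists of the variables. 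The exponents are governed by the recursion: a leaf $x_i$ has data $(N,e_i)=(0,0)$; applying $\alpha$ raises $N$ by one; and a node $w_1\bullet w_2$ with subtree exponents $N_1,N_2$ has $N=\min(N_1,N_2)+1$ while every leaf exponent of the heavier subtree is increased by $|N_1-N_2|$ (this last step uses that $\phi$ commutes with pure monomials). A short calculation, running this recursion on the monomial shapes of \eqref{HJL1} and of \eqref{HJL2} — of which there are only a couple up to the symmetries of each identity — shows that the data $(N;e_1,e_2,e_3,e_4)$ is the same for every monomial on both sides: for \eqref{HJL1} all monomials are ``balanced'', giving $N=3$ and $e_1=e_2=e_3=e_4=0$, while for \eqref{HJL2} one gets $N=2$, $e_2=0$ and $e_1=e_3=e_4=1$.

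Granting this, the twisted form of \eqref{HJL1} (respectively \eqref{HJL2}) at $x_1,x_2,x_3,x_4$ is term by term equal to $\phi^{N}$ applied to the original identity \eqref{HJL1} (respectively \eqref{HJL2}) of $A$ evaluated at $\phi^{e_1}(x_1),\dots,\phi^{e_4}(x_4)$; since $A$ is a Hom-Jordan dialgebra this identity holds, and $\phi^{N}$ preserves equalities. Together with the checks of multiplicativity and of \eqref{HJL0}, this shows that $A_\phi$ is a Hom-Jordan dialgebra. The one delicate point — the main obstacle — is precisely verifying that the exponent data $(N;e_1,e_2,e_3,e_4)$ is independent of which monomial of \eqref{HJL1} (resp. \eqref{HJL2}) one starts from, since, unlike the number of operations and the total $\alpha$-multiplicity, this depends on the bracketing as well as on the placement of the $\alpha$'s; it is settled by the short bookkeeping indicated above. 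Everything else is routine, of the same flavour as the computations already carried out for the hemisemi-direct products in this section.
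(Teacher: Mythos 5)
Your argument is correct: running your exponent bookkeeping, every monomial of \eqref{HJL1} indeed collapses to $\phi^{3}$ applied to the corresponding untwisted monomial (all four bracketing shapes are balanced), and every monomial of \eqref{HJL2} collapses to $\phi^{2}$ of the original monomial evaluated at $(\phi(x_1),x_2,\phi(x_3),\phi(x_4))$, so the twisted identities follow from those of $A$; the checks of multiplicativity of $\phi\alpha$ and of \eqref{HJL0} are likewise correct. The paper states this theorem without proof, as an instance of the standard Yau twisting principle, and your direct verification is precisely the argument that is implicitly intended.
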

\begin{cor}
     Let $(A,\bullet)$ be a Jordan dialgebra and $\alpha:A\rightarrow A$ be an algebra endomorphism.  Then
$(A,\bullet_\alpha,\alpha)$, where $\bullet_\alpha=\alpha\bullet$, is a Hom-Jordan dialgebra.

\end{cor}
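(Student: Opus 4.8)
The plan is to deduce this corollary directly from the theorem immediately above it, which is the Hom-Jordan-dialgebra incarnation of Yau's twisting principle. The first step is to observe that a Jordan dialgebra $(A,\bullet)$, in the sense recalled before the corollary, is exactly a Hom-Jordan dialgebra with twisting map equal to the identity. Indeed, setting $\alpha=\mathrm{id}$ in the defining identities \eqref{HJL0}, \eqref{HJL1} and \eqref{HJL2} makes every occurrence of $\alpha$, $\alpha^2$, $\alpha(x_i\bullet x_j)$, etc. collapse, and one recovers term by term the three polynomial identities listed after the definition of (left) Jordan dialgebra. Hence $(A,\bullet,\mathrm{id})$ is a Hom-Jordan dialgebra.

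Next I would check that an algebra endomorphism $\alpha\colon A\to A$ is automatically a morphism of the Hom-Jordan dialgebra $(A,\bullet,\mathrm{id})$ in the sense of the definition of morphism of Hom-Jordan dialgebras given just before the theorem: the compatibility with the twisting maps, $\alpha\circ\mathrm{id}=\mathrm{id}\circ\alpha$, is trivial, and $\alpha(x\bullet y)=\alpha(x)\bullet\alpha(y)$ holds by the hypothesis that $\alpha$ is an endomorphism.

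Finally, I would invoke the preceding theorem applied to the Hom-Jordan dialgebra $(A,\bullet,\mathrm{id})$ together with the morphism $\phi=\alpha$. It produces the Hom-Jordan dialgebra $A_\alpha=(A,\ \alpha\circ\bullet,\ \alpha\circ\mathrm{id})$, and since $\alpha\circ\bullet=\bullet_\alpha$ and $\alpha\circ\mathrm{id}=\alpha$, this is precisely $(A,\bullet_\alpha,\alpha)$, which is the assertion.

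There is essentially no obstacle here; the corollary is a formal specialization of the theorem. The only point requiring a (routine) verification is the first step, namely that the $\alpha=\mathrm{id}$ specializations of \eqref{HJL1}–\eqref{HJL2} coincide with the classical Jordan-dialgebra identities, and this is immediate from the way the Hom-identities were written. One could, if preferred, prove the corollary by hand: substitute $\bullet_\alpha=\alpha\bullet$ into \eqref{HJL0}–\eqref{HJL2} and use repeatedly that $\alpha$ is multiplicative to pull all the $\alpha$'s outside, reducing each Hom-identity to $\alpha^{k}$ applied to the corresponding classical identity for $(A,\bullet)$; but routing through the theorem above avoids redoing that computation, so I would present it that way.
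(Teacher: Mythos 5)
Your proposal is correct and follows exactly the route the paper intends: the corollary is the specialization of the preceding theorem to the Hom-Jordan dialgebra $(A,\bullet,\mathrm{id})$ (using that identities \eqref{HJL0}--\eqref{HJL2} with $\alpha=\mathrm{id}$ recover the classical Jordan dialgebra identities) with $\phi=\alpha$, giving $A_\alpha=(A,\bullet_\alpha,\alpha)$. The paper leaves this deduction implicit, and your write-up supplies precisely the routine checks it omits.
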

\emptycomment{\begin{ex}
    Let $\{e_1,e_2,e_3\}$ be a basis of   a $3$-dimensional vector space $A$ over $\mathbb{K}$. The following product $\mu $ and the linear map $\alpha$ define Hom-Jordan algebras over $\mathbb{K}$.
$$
\begin{array}{c|c|c|c}
  \mu  & e_1 & e_2 & e_3 \\
  \hline
  e_1& ae_1 & ae_2 & be_3 \\
  \hline
  e_2 & ae_2 & ae_2 & \frac{b}{2}e_3 \\
  \hline
  e_3 & be_3 & \frac{b}{2}e_3 & 0
\end{array},$$
\begin{align*}
    \alpha(e_1)=ae_1,\ \alpha(e_2)=ae_2,\ \alpha(e_3)=be_3,
\end{align*}
where $a$ and $b$ are parameters in $\mathbb{K}$.  Let $A$ be the operator defined with respect to the basis $\{e_1,e_2,e_3\}$ by
\begin{align*}
    A(e_1)=\lambda_1 e_1,\ A(e_2)=\lambda_2 e_2,\  A(e_3)=\lambda_3 e_3,
\end{align*}
where $\lambda_1,\lambda_2$ and $\lambda_3$ are parameters in $\mathbb{K}$.  Then we can easily check that $A$ is an averaging operator on $A$.
Now, using  the previous corollary,   there is a Hom-Jordan dialgebra structure
on $A$, with the same twist map and the multiplication given by
$x\cdot y= \mu(A(x),y), \quad \forall x,y \in A$, that is

$$
\begin{array}{c|c|c|c}
  \cdot  & e_1 & e_2 & e_3 \\
  \hline
  e_1& \lambda_1 a e_1 & \lambda_1 a e_2 & \lambda_1 b e_3 \\
  \hline
  e_2 & \lambda_2 a e_3 & \lambda_2 ae_2 & \lambda_2 \frac{b}{2}e_3\\
  \hline
  e_3 & \lambda_3 b e_3 & \lambda_3 \frac{b}{2}e_3  & 0
\end{array}.$$
\end{ex}}
\begin{thm}\label{diasstodijor}
 Let $\mathcal{K}:V\to A$ be a relative averaging operator on a Hom-associative algebra $(A,\c,\alpha)$ with respect to an $A$-bimodule $(V,l,r,\b)$. Then $\mathcal{K}$ is a relative averaging operator on the associated Hom-Jordan algebra  $(A,\o=\c+\c^{op},\alpha)$, where $x\c^{op} y=y\c x$, with respect to an $A$-module $(V,\pi=l+r,\b)$.

 Moreover, if $(V,\lprod,\rprod,\b)$ is the Hom-associative dialgebra associated to the Hom-associative algebra $(A,\c,\alpha)$. Then the anti-dicommutator defined in Eq. \eqref{antidicomm} makes $V$ into a Hom-Jordan dialgebra.
\end{thm}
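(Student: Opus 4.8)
\emph{Structure of the argument.} The statement has two assertions, and I would derive the second almost formally from the first, so the bulk of the work lies in the first.

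\emph{First assertion.} By definition, a relative averaging operator on a \textsf{HJor-Rep} pair requires that $(V,\pi,\beta)$ genuinely be a representation of the Hom-Jordan algebra $(A,\circ,\alpha)$, and that $\mathcal K\beta=\alpha\mathcal K$ together with $\mathcal K(u)\circ\mathcal K(v)=\mathcal K(\pi(\mathcal K(u))v)$ hold for all $u,v\in V$. The equivariance $\mathcal K\beta=\alpha\mathcal K$ is part of the hypothesis. The averaging identity is immediate: since $\mathcal K$ is simultaneously a left and a right relative averaging operator on $(A,\cdot,\alpha)$,
\[
\mathcal K(u)\circ\mathcal K(v)=\mathcal K(u)\cdot\mathcal K(v)+\mathcal K(v)\cdot\mathcal K(u)=\mathcal K(l(\mathcal K(u))v)+\mathcal K(r(\mathcal K(u))v)=\mathcal K(\pi(\mathcal K(u))v),
\]
using the left condition on the first summand, the right condition (with the roles of $u$ and $v$ interchanged) on the second, and $\pi=l+r$.

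\emph{The representation claim.} It then remains to check that $(V,l+r,\beta)$ is a representation of $(A,\circ,\alpha)$, i.e.\ that \eqref{representation}, \eqref{RepHomJor1} and \eqref{RepHomJor2} hold. Identity \eqref{representation} reads $\beta\pi(x)=\pi(\alpha(x))\beta$ and follows by adding the two equations of \eqref{rAs1}. For \eqref{RepHomJor1} and \eqref{RepHomJor2} I would expand every occurrence of $\pi$ as $l+r$ and of $\circ$ as $\cdot+\cdot^{op}$; each resulting summand is a composite of two of the maps $l,r$ evaluated on elements of $A$ or on products, and gets rewritten using the bimodule relations \eqref{rAs4}, namely $l(x\cdot y)\beta=l(\alpha(x))l(y)$, $r(x\cdot y)\beta=r(\alpha(y))r(x)$ and $l(\alpha(x))r(y)=r(\alpha(y))l(x)$. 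After these substitutions both sides of each identity collapse to the same symmetrized sum in $x,y,z$. Equivalently, one may invoke the Hom-Jordan-admissibility of Hom-associative algebras recalled in Section~\ref{Preliminaries}: the semidirect product Hom-associative algebra $A\oplus V$ (with zero multiplication on $V$) gives a Hom-Jordan algebra $(A\oplus V)^{+}$, whose anti-commutator has the form $(x+u)\circ(y+v)=x\circ y+\pi(x)v+\pi(y)u$, and reading off the part of the Hom-Jordan identity that is linear in the $V$-component yields precisely \eqref{representation}--\eqref{RepHomJor2}. This identity bookkeeping is the only laborious step; the obstacle is essentially combinatorial — controlling the number of terms and tracking which factor of each product carries the twisting map.

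\emph{Second assertion.} By Theorem~\ref{asstodiass}, the Hom-associative dialgebra on $V$ attached to $\mathcal K$ is $(V,\vdash_{\mathcal K},\dashv_{\mathcal K},\beta)$ with $u\vdash_{\mathcal K}v=l(\mathcal K(u))v$ and $u\dashv_{\mathcal K}v=r(\mathcal K(v))u$. Its anti-dicommutator \eqref{antidicomm} is therefore
\[
u\bullet v=u\vdash_{\mathcal K}v+v\dashv_{\mathcal K}u=l(\mathcal K(u))v+r(\mathcal K(u))v=\pi(\mathcal K(u))v,
\]
which is exactly the operation $\bullet_{\mathcal K}$ of Theorem~\ref{JortodiJor}. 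By the first assertion, $\mathcal K$ is a relative averaging operator on the \textsf{HJor-Rep} pair $(A,V)$ with representation $(V,\pi,\beta)$, so Theorem~\ref{JortodiJor} applies and shows that $(V,\bullet,\beta)$ is a Hom-Jordan dialgebra. This completes the proof, modulo the verification of the module axioms sketched above.
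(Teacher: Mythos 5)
Your proposal is correct and follows essentially the same route as the paper: the averaging identity $\mathcal K(u)\circ\mathcal K(v)=\mathcal K(l(\mathcal K(u))v)+\mathcal K(r(\mathcal K(u))v)=\mathcal K(\pi(\mathcal K(u))v)$ via the left and right conditions, and the second assertion by identifying the anti-dicommutator of the dialgebra from Theorem~\ref{asstodiass} with $\bullet_{\mathcal K}=\pi(\mathcal K(\cdot))\,\cdot$ and invoking Theorem~\ref{JortodiJor}. The only difference is that you explicitly flag (and sketch, via the bimodule relations or the semidirect-product admissibility argument) the verification that $(V,\pi=l+r,\beta)$ is a Hom-Jordan module, a point the paper's proof leaves implicit.
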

\begin{proof}
    For any $u,v\in V$, we have
    \begin{eqnarray*}
        \mathcal{K}(u)\o \mathcal{K}(v)&=&\mathcal{K}(u)\c \mathcal{K}(v)+\mathcal{K}(v)\c \mathcal{K}(u)\\&=&\mathcal{K}(l(\mathcal{K}(u))v)+\mathcal{K}(r(\mathcal{K}(u))v)\\&=&\mathcal{K}(l(\mathcal{K}(u))v+r(\mathcal{K}(u))v)\\&=&\mathcal{K}(\pi(\mathcal{K}(u))v).
    \end{eqnarray*}
Then $\mathcal{K}$ is a relative averaging operator on the Hom-Jordan algebra $(A,\o,\a)$.

Furthermore, it follows from Theorem \ref{asstodiass} that $(V,\lprod,\rprod,\b)$ is a Hom-associative dialgebra where $$u\lprod v=l(\mathcal{K}(u))v,\quad u\rprod v=r(\mathcal{K}(v))u, $$
on the other hand, according to Theorem \ref{JortodiJor},  we have $(V,\bullet,\b)$ is a Hom-Jordan dialgebra where
$$u\bullet v=\pi(\mathcal{K}(u)v)=l(\mathcal{K}(u))v+r(\mathcal{K}(u))v=u\lprod v+v\rprod u.$$
This completes the
proof.
\end{proof}
Thus, we have the following diagram
\emptycomment{\begin{equation*}
\xymatrix{\text{Hom-diass}\ar@{<-}[d]_{Thm\ \ref{asstodiass}} & &\text{Hom-diJor} \ar@{<-}_{Thm\ \ref{diasstodijor}}[ll]\ar@{<-}_{Thm\ \ref{JortodiJor}}[d] \\
\text{Hom-ass} & & \text{Hom-Jordan} \ar@{<-}_{-}[ll] }
\end{equation*}}

\begin{equation*}
\xymatrix{\text{Hom-Leib}\ar@{<-}[d]_{\text{Thm}\ \ref{LietoLeibniz}} & &\ar@{->}_{\text{Prop}\ \ref{asstoLeibniz}}[ll] \text{Hom-diass}\ar@{<-}[d]_{\text{Thm}\ \ref{asstodiass}} & &\text{Hom-diJor} \ar@{<-}_{\text{Thm}\ \ref{diasstodijor}}[ll]\ar@{<-}_{\text{Thm}\ \ref{JortodiJor}}[d] \\
\text{Hom-Lie} & & \text{Hom-ass} \ar@{->}_{-}[ll] & & \text{Hom-Jordan} \ar@{<-}_{+}[ll] }
\end{equation*}

\section{Triplications via homomorphic relative averaging operator}\label{triplication}

In this section, we introduce the notion of a homomorphic relative averaging operator  on a Hom-algebra which allows 
the replicating of  the multiplication into three multiplications of the same nature. But notice that homomorphic relative averaging operators do not generalize averaging operators as in the case of relative  Rota-Baxter operators. On the other hand, in \cite{tangsheng}, the authors introduce a weighted version of relative averaging operator called non-abelian embedding tensor. These operators give rise to other algebraic structures such  as  Lie-Leibniz algebras from Lie algebras. A homomorphic relative  averaging operator   on a Lie algebra  induces a triLeibniz algebra.
\subsection{Homomorphic relative averaging operator}
\begin{defi}
Let $(A,V)$ be a \textsf{HAss-Act} pair. A linear map $\mathcal{H}: V \to A$  is called a homomorphic relative  averaging operator,
     if it is a relative averaging operator and for any $u,v \in V$,
     \begin{align}
        \label{avop2} \mathcal{H}(u)\cdot\mathcal{H}(v)=& \mathcal{H}(u\cdot_V v).
     \end{align}
\end{defi}
\begin{ex}
    Let $(A,\c,\a)$ be a Hom-associative algebra. Note that the space $B = \underbrace{A\oplus\cdots\oplus A}_{n \text{ summand}}$ can be given a Hom-associative algebra structure with the multiplication given by the componentwise multiplication of $A$, and the twisting map $\b$ given by
    $$\b(a_1,\ldots,a_n)=(\a(a_1),\ldots,\a(a_n))$$. The space $B$ is a Hom-associative $A$-bimodule with the left and right $A$-actions on $B$ are respectively given by
    \begin{align}\label{left-right-b}
  l(a)(a_1,\ldots,a_n) = (l(a) a_1,\ldots,l(a) a_n)~~ \text{ and } ~~  r(a)(a_1,\ldots,a_n) = (r(a)a_1 ,\ldots,r(a)a_n ),
\end{align}
for $a\in A$  and $(a_1,\ldots,a_n)\in B$. Then for any $1\leq i\leq n$, the $i$-th projection map $P_i : B \rightarrow A$ is a homomorphic relative averaging operator.
\end{ex}
\begin{ex} A crossed module of Hom-associative algebras is a quadruple $(A, V, d,(l,r))$ in which $(A,\c,\a)$ is a Hom-associative algebra, $(V,l,r,\b,\c_V)$ is an action and $d:V\to A$ is a morphism of Hom-associative algebra, where
\begin{align*}
d (l(x)u) = x \cdot d(u), \quad d(r(x)u)= d(u) \cdot x, \quad l(d(u))v = r(d(v))u = u \c_V v,
\end{align*} for all $  x \in A, u, v \in V.$
Thus, $d$ is a homomorphic  relative averaging operator of the Hom-associative algebra $(A,\c,\a)$.
\end{ex}

Recall that, an $\mathcal{O}$-operator of weight $\lambda$  is a linear map  $O: V \to A$ satisfying $O\b=\a O$, and for any $u,v\in V$,
\begin{equation}
    \label{O-Operator}Ou\cdot Ov=O(l(Ou)v+r(Ov)u+\lambda(u\cdot_Vv)).
\end{equation}
\begin{lem}\label{ooperator}
Let $\mathcal{H}$ be a homomorphic relative averaging operator on a \textsf{HAss-Act pair} $(A,V)$. Then the linear operator $O=-\mathcal{H}$ is an $\mathcal{O}$-operator of weight $-1$ on $A$.
\end{lem}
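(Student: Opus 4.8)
The plan is a direct verification of the defining identity \eqref{O-Operator} of an $\mathcal{O}$-operator for the map $O=-\mathcal{H}$ with weight $\lambda=-1$. First I would dispose of the twisting compatibility: a homomorphic relative averaging operator is by definition also a relative averaging operator, so $\mathcal{H}\beta=\alpha\mathcal{H}$, whence $O\beta=\alpha O$ as well. The only substantive thing to check is then the quadratic relation in \eqref{O-Operator}.

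For that, fix $u,v\in V$. On the left-hand side, bilinearity of the product gives $O(u)\cdot O(v)=(-\mathcal{H}(u))\cdot(-\mathcal{H}(v))=\mathcal{H}(u)\cdot\mathcal{H}(v)$. On the right-hand side I would use that $l(\cdot)$ and $r(\cdot)$ are linear in their first argument and that $\mathcal{H}$ is linear in order to pull all the minus signs through; this rewrites $O\big(l(O(u))v+r(O(v))u+\lambda(u\cdot_V v)\big)$ as a signed sum of the three terms $\mathcal{H}\big(l(\mathcal{H}(u))v\big)$, $\mathcal{H}\big(r(\mathcal{H}(v))u\big)$ and $\mathcal{H}(u\cdot_V v)$. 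Now each of these three is replaced by $\mathcal{H}(u)\cdot\mathcal{H}(v)$: the first by the left relative averaging axiom, the second by the right relative averaging axiom, and the third by the homomorphism condition \eqref{avop2}. Collecting the resulting multiple of $\mathcal{H}(u)\cdot\mathcal{H}(v)$ and matching it against the left-hand side then gives the claim.

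I do not expect any genuine difficulty: the argument is purely linear and uses nothing beyond the three axioms of a homomorphic relative averaging operator. Conceptually the point is that \eqref{avop2} is exactly the extra ``weight term'' that is absent from a weight-zero $\mathcal{O}$-operator, so feeding $\mathcal{H}$ into the weighted $\mathcal{O}$-operator equation reproduces that term automatically. If anything is delicate, it is the sign bookkeeping introduced by passing from $\mathcal{H}$ to $O=-\mathcal{H}$ together with the weight, since that is what governs how the three copies of $\mathcal{H}(u)\cdot\mathcal{H}(v)$ recombine into the identity demanded by \eqref{O-Operator}; once this is handled carefully the verification is a couple of lines.
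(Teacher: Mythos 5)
Your plan is the same direct verification that the paper attempts, but the step you explicitly defer --- the sign bookkeeping that decides ``how the three copies of $\mathcal{H}(u)\cdot\mathcal{H}(v)$ recombine'' --- is exactly where the argument breaks down, and carried out honestly it does not give the claim. Write $X=\mathcal{H}(u)\cdot\mathcal{H}(v)$. With $O=-\mathcal{H}$ and $\lambda=-1$, the left-hand side of \eqref{O-Operator} is $(-\mathcal{H}u)\cdot(-\mathcal{H}v)=X$, while the right-hand side is
\begin{align*}
O\bigl(l(Ou)v+r(Ov)u+\lambda(u\cdot_V v)\bigr)
&=-\mathcal{H}\bigl(-l(\mathcal{H}u)v-r(\mathcal{H}v)u-u\cdot_V v\bigr)\\
&=\mathcal{H}\bigl(l(\mathcal{H}u)v\bigr)+\mathcal{H}\bigl(r(\mathcal{H}v)u\bigr)+\mathcal{H}(u\cdot_V v)=3X,
\end{align*}
where the last equality uses the left and right averaging axioms together with \eqref{avop2}. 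So the identity you need is $X=3X$, which fails in characteristic $0$ unless $\mathcal{H}(u)\cdot\mathcal{H}(v)=0$. The point you glossed over is that the sign in $O=-\mathcal{H}$ cancels in every term that is quadratic in $O$ and only flips the weight term, so ``$-\mathcal{H}$ of weight $-1$'' is literally the same demand as ``$\mathcal{H}$ of weight $+1$'', and that demand is not met.

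For comparison, the paper's own displayed computation vanishes only because it carries the term $+O(r(Ov)u)$ where definition \eqref{O-Operator} requires $-O(r(Ov)u)$; in other words it verifies a differently signed identity, not the stated one, so it does not rescue your final ``matching'' step. What this method does prove is the nearby correct statement: $\mathcal{H}$ itself satisfies \eqref{O-Operator} with $\lambda=-1$ (equivalently, $-\mathcal{H}$ satisfies it with $\lambda=+1$), since then the three terms contribute $X+X-X=X$, which matches the left-hand side. As written, though, your proposal asserts a conclusion that the computation it outlines actually refutes, so the gap is genuine and sits precisely in the sign/weight bookkeeping you flagged as ``delicate''.
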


\begin{proof}
Let $u,v\in V$, we have
\begin{align*}
    &(-\mathcal{H}(u))\cdot(-\mathcal{H}(v))-(-\mathcal{H}(l(-\mathcal{H}(u))v))+(-\mathcal{H}(r(-\mathcal{H}(v))u))-((-1)(-\mathcal{H}(u\cdot_Vv)))\\=&\mathcal{H}(u)\cdot\mathcal{H}(v)-\mathcal{H}(l(\mathcal{H}u)v)+\mathcal{H}(r(\mathcal{H}v)u)- \mathcal{H}(u\cdot_Vv) =0.  \end{align*}
    Thus, $-\mathcal{H}$ is an $\mathcal{O}$-operator of weight $-1$ on $A$.
\end{proof}
\begin{defi}\label{triass}
A \emph{Hom-associative trialgebra} (also called Hom-associative trialgebra) is a vector space $A$ that comes equipped with three binary operations, $\rprod$ (left), $\lprod$ (right), $\mprod$ (middle), and a linear map $\alpha:A\rightarrow A$, where $(A,\lprod,\rprod,\a)$ is a Hom-associative dialgebra, $(A,\mprod,\a)$ is a Hom-associative algebra and for all $x, y, z \in A$:
\begin{align}
   (x \rprod y) \rprod \alpha(z) & = \alpha(x) \rprod (y \,\mprod\, z), \label{axiom6} \\
   (x \,\mprod\, y) \rprod \alpha(z) & = \alpha(x) \,\mprod\, (y \rprod z), \label{axiom7} \\
   (x \rprod y) \,\mprod\, \alpha(z) & = \alpha(x) \,\mprod\, (y \lprod z), \label{axiom8} \\
   (x \lprod y) \,\mprod\, \alpha(z) & = \alpha(x) \lprod (y \,\mprod\, z), \label{axiom9} \\
   (x \,\mprod\, y) \lprod \alpha(z) & = \alpha(x) \lprod (y \lprod z), \label{axiom10}
   \end{align}

When the twisting map $\alpha$ is the identity map, we recover the classical
notion of associative trialgebras.
\end{defi}
  Let $(A,\lprod,\rprod,\mprod,\alpha)$ and $(A^\prime,\lprod^\prime,\rprod^\prime,\mprod^\prime,\alpha^\prime)$ be a two Hom-triassociative  algebras, a morphism of Hom-associative trialgebras is a morphism of Hom-associative dialgebra satisfying, for all $x,y\in A$
    $$\phi(x\mprod y)=\phi(x)\mprod^\prime \phi(y).
$$
\begin{rmk}
    If $(A,\lprod,\rprod,\alpha)$ is a Hom-associative dialgebra, then $(A,\lprod,\rprod,0,\alpha)$ is a Hom-associative trialgebra.
\end{rmk}
\begin{thm}
Let $(A,\lprod,\rprod,\mprod,\alpha)$ be a Hom-associative trialgebra and $\phi:A \to A$ be a morphism, then
$$A_\phi=(A,\lprod_\phi=\phi\lprod,\rprod_\phi=\phi\rprod,\mprod_\phi=\phi\mprod,\phi\alpha)$$
is also a Hom-associative trialgebra. Moreover, if $A$ is multiplicative, then
so is $A_\phi$.
\end{thm}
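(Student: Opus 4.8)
The statement is a "twisting" result in the style of Yau's Hom-construction: given a Hom-associative trialgebra $(A,\lprod,\rprod,\mprod,\alpha)$ and an algebra morphism $\phi\colon A\to A$, one endows $A$ with the three deformed products $\lprod_\phi=\phi\lprod$, $\rprod_\phi=\phi\rprod$, $\mprod_\phi=\phi\mprod$ and the twist $\phi\alpha$, and must verify that $A_\phi$ is again a Hom-associative trialgebra. The approach is purely computational: check each of the defining axioms of Definition \ref{triass} for $A_\phi$, using that $\phi$ commutes with $\alpha$ and is multiplicative for all three products, and that $(A,\lprod,\rprod,\mprod,\alpha)$ already satisfies its own axioms. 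I would organize the verification into three blocks: (i) $(A,\lprod_\phi,\rprod_\phi,\phi\alpha)$ is a Hom-associative dialgebra; (ii) $(A,\mprod_\phi,\phi\alpha)$ is a Hom-associative algebra; (iii) the five mixed identities \eqref{axiom6}--\eqref{axiom10} hold.

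The key computational observation, to be stated once and reused, is that for any of the three products, say $\diamond\in\{\lprod,\rprod,\mprod\}$ with $\diamond_\phi=\phi\diamond$, and any $x,y,z\in A$, one has
\begin{align*}
(x\diamond_\phi y)\diamond'_\phi (\phi\alpha)(z)&=\phi\bigl(\phi(x\diamond y)\diamond'\,\phi\alpha(z)\bigr)=\phi^2\bigl((x\diamond y)\diamond'\alpha(z)\bigr),\\
(\phi\alpha)(x)\diamond'_\phi(y\diamond_\phi z)&=\phi\bigl(\phi\alpha(x)\diamond'\,\phi(y\diamond z)\bigr)=\phi^2\bigl(\alpha(x)\diamond'(y\diamond z)\bigr),
\end{align*}
where I used $\phi\alpha=\alpha\phi$ and the multiplicativity $\phi(x\diamond y)=\phi(x)\diamond\phi(y)$ to pull the single $\phi$'s out, then again to produce the overall $\phi^2$. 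Hence every quadratic identity for $A_\phi$ of the shape $(x\,*\,y)\,*'\,(\phi\alpha)(z)=(\phi\alpha)(x)\,*'(y\,*''\,z)$ is exactly $\phi^2$ applied to the corresponding identity $(x*y)*'\alpha(z)=\alpha(x)*'(y*''z)$ valid in $A$; since the latter holds and $\phi^2$ is a well-defined linear map, the former holds. This single lemma-style remark disposes of the left/right bar identities, left/right/inner Hom-associativity, Hom-associativity of $\mprod_\phi$, and all of \eqref{axiom6}--\eqref{axiom10} in one stroke.

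For the multiplicativity clause, I would note that $\phi\alpha$ preserves each $\diamond_\phi$ precisely because $(\phi\alpha)(x\diamond_\phi y)=\phi\alpha(\phi(x\diamond y))=\phi(\phi\alpha(x\diamond y))=\phi(\phi(\alpha(x)\diamond\alpha(y)))$, using multiplicativity of $\alpha$ for $\diamond$; and on the other hand $(\phi\alpha)(x)\diamond_\phi(\phi\alpha)(y)=\phi(\phi\alpha(x)\diamond\phi\alpha(y))=\phi(\phi(\alpha(x)\diamond\alpha(y)))$ by multiplicativity of $\phi$, so the two agree. (One also uses that $\phi$ and $\alpha$ commute, which follows from $\phi$ being a morphism of the trialgebra.) This handles all three products simultaneously.

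I do not anticipate a genuine obstacle here: the proof is a bookkeeping exercise, and the only mild subtlety is recognizing that one never needs to expand the individual axioms of a Hom-associative trialgebra — every required identity for $A_\phi$ is the image under $\phi^2$ of the corresponding identity in $A$, so the whole verification collapses to the single structural remark above plus the multiplicativity check. The "hardest" part is purely organizational: making sure the observation is phrased uniformly enough to cover the bar identities, the three dialgebra associativities, the $\mprod$-associativity, and the five axioms \eqref{axiom6}--\eqref{axiom10} without writing each out, which I would do by treating a generic identity of the form $(x*y)*'\alpha(z)-\alpha(x)*'(y*''z)=0$ and noting the case distinctions only amount to choosing which of $\lprod,\rprod,\mprod$ plays the role of $*,*',*''$.
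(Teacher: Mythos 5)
The paper does not actually supply a proof of this theorem (it is stated and immediately followed by Corollary \ref{twisttriass}), so there is nothing to compare against; judged on its own, your argument is correct and complete. Your single structural observation does all the work because every defining identity of a Hom-associative trialgebra --- the two bar identities, the three dialgebra Hom-associativities, Hom-associativity of $\mprod$, and the five mixed axioms \eqref{axiom6}--\eqref{axiom10} --- is a linear combination of terms of exactly the two shapes $(x\diamond y)\diamond'\alpha(z)$ and $\alpha(x)\diamond'(y\diamond'' z)$ with $\diamond,\diamond',\diamond''\in\{\lprod,\rprod,\mprod\}$, and your computation shows each such term twists to $\phi^{2}$ of the corresponding untwisted term, so each twisted axiom is $\phi^{2}$ applied to an identity already valid in $A$. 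The multiplicativity check, using $\phi\alpha=\alpha\phi$ together with multiplicativity of $\phi$ and of $\alpha$, is likewise correct; this is the standard Yau-twisting argument the authors evidently had in mind.
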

\begin{cor}\label{twisttriass}
    Let $(A,\lprod,\rprod,\mprod)$ be an associative trialgebra and $\alpha:A\to A$ be an algebra endomorphism. Then $(A,\lprod_\alpha,\rprod_\alpha,\mprod_\alpha,\alpha)$  is a Hom-associative trialgebra, where $$\lprod_\alpha=\alpha\lprod,\quad \rprod_\alpha=\alpha\rprod \quad \text{and}\quad\mprod_\alpha=\alpha\mprod.$$
\end{cor}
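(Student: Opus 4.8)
The plan is to derive this as an immediate specialization of the preceding Theorem, taking the ambient Hom-associative trialgebra to be the classical associative trialgebra equipped with the identity twisting map. By the remark following Definition \ref{triass}, the data $(A,\lprod,\rprod,\mprod)$ is the same thing as the Hom-associative trialgebra $(A,\lprod,\rprod,\mprod,\mathrm{id}_A)$: axioms \eqref{axiom6}--\eqref{axiom10}, the Hom-associative dialgebra axioms for $(A,\lprod,\rprod,\mathrm{id}_A)$, and the Hom-associativity of $(A,\mprod,\mathrm{id}_A)$ all collapse, with $\alpha=\mathrm{id}_A$, to the defining identities of an associative trialgebra. This Hom-associative trialgebra is trivially multiplicative, since $\mathrm{id}_A$ preserves every operation.

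Next I would observe that an algebra endomorphism $\alpha\colon A\to A$ is precisely a morphism of the Hom-associative trialgebra $(A,\lprod,\rprod,\mprod,\mathrm{id}_A)$ to itself, in the sense of the morphism definition preceding the Theorem: the twist-compatibility $\alpha\circ\mathrm{id}_A=\mathrm{id}_A\circ\alpha$ holds trivially, and the remaining conditions $\alpha(x\lprod y)=\alpha(x)\lprod\alpha(y)$, $\alpha(x\rprod y)=\alpha(x)\rprod\alpha(y)$, $\alpha(x\mprod y)=\alpha(x)\mprod\alpha(y)$ are exactly the assumption that $\alpha$ is an algebra endomorphism of $(A,\lprod,\rprod,\mprod)$. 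Applying the preceding Theorem to the Hom-associative trialgebra $(A,\lprod,\rprod,\mprod,\mathrm{id}_A)$ with $\phi=\alpha$ then yields the Hom-associative trialgebra
$$A_\alpha=\bigl(A,\ \alpha\circ\lprod,\ \alpha\circ\rprod,\ \alpha\circ\mprod,\ \alpha\circ\mathrm{id}_A\bigr)=(A,\lprod_\alpha,\rprod_\alpha,\mprod_\alpha,\alpha),$$
and the second assertion of that Theorem, combined with the multiplicativity of $(A,\lprod,\rprod,\mprod,\mathrm{id}_A)$ noted above, guarantees that $A_\alpha$ is multiplicative too. This is exactly the statement of the corollary.

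There is no genuine obstacle here; the only point requiring a moment's care is the bookkeeping that the twisting-by-a-morphism construction of the Theorem, applied to the untwisted trialgebra, returns precisely the operations $\lprod_\alpha=\alpha\circ\lprod$, $\rprod_\alpha=\alpha\circ\rprod$, $\mprod_\alpha=\alpha\circ\mprod$ named in the statement together with the twisting map $\alpha\circ\mathrm{id}_A=\alpha$. Alternatively one could argue directly, substituting $\lprod_\alpha,\rprod_\alpha,\mprod_\alpha$ into \eqref{axiom6}--\eqref{axiom10} and pulling each occurrence of $\alpha$ through the original products via the endomorphism property, but this merely repeats the computation underlying the proof of the Theorem, so routing through the Theorem is the cleaner route.
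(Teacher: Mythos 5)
Your proposal is correct and matches the paper's intended derivation: the corollary is exactly the specialization of the preceding twisting theorem to the Hom-associative trialgebra $(A,\lprod,\rprod,\mprod,\mathrm{id}_A)$ with $\phi=\alpha$, which is precisely how the paper positions it (no separate proof is given there). Your bookkeeping of the morphism conditions, the resulting operations $\lprod_\alpha,\rprod_\alpha,\mprod_\alpha$ and twist $\alpha\circ\mathrm{id}_A=\alpha$, and the multiplicativity remark are all accurate.
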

Now we give an example.
\begin{ex}\label{extriass}
Let $\{e_1,e_2\}$ be a basis of   a $2$-dimensional vector space $A$ over $\mathbb{K}$. In \cite{Mainellis} the author shows that the following products $\lprod,\rprod$ and $\mprod $ define an associative trialgebra over $\mathbb{K}$.
$$e_1\lprod e_1=e_1\mprod e_1=e_1\mprod e_1=e_1,$$
    $$
    e_1\lprod e_2=e_2\rprod e_1=e_1\mprod e_2=e_2.$$

    We construct a twisting map $\alpha$ on $A$ as follows
    \begin{align*}
    \alpha(e_1)=ae_1,\ \alpha(e_2)=be_2,
\end{align*}
where $a,b$ and $c$ are parameters in $\mathbb{K}$.
Then according to Corollary \ref{twisttriass}, $(A,\lprod_\alpha,\rprod_\alpha,\mprod_\alpha,\alpha)$ is a Hom-associative trialgebra, where
$$e_1\lprod_\alpha e_1=e_1\mprod e_1=e_1\mprod_\alpha e_1=ae_1,$$
    $$
    e_1\lprod_\alpha e_2=e_2\rprod_\alpha e_1=e_1\mprod_\alpha e_2=be_2.$$
\end{ex}
Let $(A,V)$ be a \textsf{HAss-Act} pair. Define on the direct sum $A\oplus V$ the mutiplications $\lprod_{A\oplus V},\rprod_{A\oplus V}$ and $\mprod_{A\oplus V}$, and the map $\alpha_{A\oplus V}$ by
\begin{eqnarray}
    (x+u)\lprod_{A\oplus V} (y+v)&=&x\cdot y+l(x)v,\\
    (x+u)\rprod_{A\oplus V} (y+v)&=&x\cdot y+r(y)u,\\(x+u)\mprod_{A\oplus V} (y+v)&=&x\cdot y+u\cdot_Vv,\\
     \alpha_{A\oplus V}(x+u)&=&\alpha(x)+\beta(u).
\end{eqnarray}
In the following, we give a characterization of a homomorphic relative averaging operator in terms
of the graph of the operator. We first prove the following result.
\begin{pro}
    With the above notations $(A\oplus V,\lprod_{A\oplus V},\rprod_{A\oplus V},\mprod_{A\oplus V},\alpha_{A\oplus V})$ is a Hom-associative trialgebra, called \textbf{hemisemi-direct product} Hom-associative trialgebra and denoted by $A\oplus_{Triass}V$.

\end{pro}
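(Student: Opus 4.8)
The plan is to verify directly that the five axioms \eqref{axiom6}--\eqref{axiom10} of Definition \ref{triass}, together with the Hom-associative dialgebra axioms for $(\lprod_{A\oplus V},\rprod_{A\oplus V})$ and the Hom-associativity of $\mprod_{A\oplus V}$, all hold on $A\oplus V$. The proof is entirely computational, and the key observation that makes it tractable is that each axiom splits into an ``$A$-component'' and a ``$V$-component''. The $A$-component is always just the corresponding identity in the Hom-associative algebra $(A,\cdot,\alpha)$ (or, for the dialgebra axioms, a triviality coming from the fact that all three products restrict to $\cdot$ on $A$), so it vanishes automatically. Hence the entire content of the proposition lies in checking the $V$-components.

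First I would record that $(A\oplus V,\lprod_{A\oplus V},\rprod_{A\oplus V},\alpha_{A\oplus V})$ is a Hom-associative dialgebra: this is exactly Proposition \ref{hsdirect diass}, so nothing new is needed there. Next I would check that $(A\oplus V,\mprod_{A\oplus V},\alpha_{A\oplus V})$ is a Hom-associative algebra; its $V$-component Hom-associator is
$(u\cdot_V v)\cdot_V\beta(w)-\beta(u)\cdot_V(v\cdot_V w)$,
which vanishes because $(V,\cdot_V,\beta)$ is itself a Hom-associative algebra (part of the data of a \textsf{HAss-Act} pair), while the cross terms between $A$ and $V$ cancel by the bimodule relations \eqref{rAs4} and the compatibility conditions \eqref{actass} for the action. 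For the five trialgebra axioms I would treat them one at a time. For instance, for \eqref{axiom6}, computing $(\,(x+u)\rprod_{A\oplus V}(y+v)\,)\rprod_{A\oplus V}\alpha_{A\oplus V}(z+w)$ gives an $A$-part $(x\cdot y)\cdot\alpha(z)$ and a $V$-part $r(\alpha(z))\big(r(y)u\big)$, while $\alpha_{A\oplus V}(x+u)\rprod_{A\oplus V}\big((y+v)\mprod_{A\oplus V}(z+w)\big)$ gives $A$-part $\alpha(x)\cdot(y\cdot z)$ and $V$-part $r(y\cdot z)\beta(u)$; the $A$-parts agree by Hom-associativity of $A$ and the $V$-parts agree by the relation $r(y\cdot z)\beta=r(\alpha(z))r(y)$ from \eqref{rAs4}. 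The remaining axioms \eqref{axiom7}--\eqref{axiom10} are handled the same way, each time producing a single bimodule/action identity from \eqref{rAs1}, \eqref{rAs4} or \eqref{actass} that closes the computation; by the symmetry $x\lprod'y=y\rprod x$ of Examples \ref{ex}(4) one can in fact deduce \eqref{axiom9}, \eqref{axiom10} from \eqref{axiom7}, \eqref{axiom6} and so only three genuinely new checks are required.

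The main obstacle, such as it is, is purely bookkeeping: keeping straight which of the three $A$-actions ($l$ on the left product, $r$ on the right product, $\cdot_V$ on the middle product) appears in each slot of each axiom, and matching it against the correct line of \eqref{rAs1}--\eqref{rAs4} and \eqref{actass}. There is no conceptual difficulty and no obstruction requiring a new idea; I would present the verification of one or two representative axioms in full and assert that the others ``are checked analogously,'' exactly in the style of the proofs of Proposition \ref{hsdirect diass} and Theorem \ref{asstodiass} earlier in the paper.
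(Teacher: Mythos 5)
Your proposal is correct and follows essentially the same route as the paper: invoke Proposition~\ref{hsdirect diass} for the dialgebra part, observe that $\mprod_{A\oplus V}$ is Hom-associative, and verify the five mixed axioms componentwise, each reducing to a single identity from \eqref{rAs4} or \eqref{actass} (the paper writes out \eqref{axiom7}, which uses \eqref{actass}, while you write out \eqref{axiom6}, which uses \eqref{rAs4}; both are representative). Two small caveats that do not affect correctness: the middle product $(x+u)\mprod_{A\oplus V}(y+v)=x\cdot y+u\cdot_V v$ has no cross terms at all, so its Hom-associativity needs no cancellation via \eqref{rAs4} or \eqref{actass}, only the Hom-associativity of $A$ and of $(V,\cdot_V,\beta)$; and your optional shortcut deducing \eqref{axiom9}--\eqref{axiom10} from \eqref{axiom6}--\eqref{axiom7} would require the trialgebra analogue of Example~\ref{ex}(4) (with the middle product replaced by its opposite and the opposite action data), which the paper does not state, though your primary direct verification already covers those axioms.
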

\begin{proof}
According to Proposition \ref{hsdirect diass}, $(A\oplus V,\lprod_{A\oplus V},\rprod_{A\oplus V},\alpha_{A\oplus V})$ is a Hom-associative dialgebra. It's easy to check that $(A\oplus V,\mprod_{A\oplus V},\a_{A\oplus V})$ is a Hom-associative algebra. Now, for any $x,y,z\in A$ and $u,v,w\in V$, we have
\begin{align*}
    &((x+u)\mprod_{A\oplus V}(y+v))\rprod_{A\oplus V}\a_{A\oplus V}(z+w)-\a_{A\oplus V}(x+u)\mprod_{A\oplus V}((y+v)\rprod_{A\oplus V}(z+w))\\=&(x\c y +u\c_V v)\rprod_{A\oplus V}(\a(z)+\b(w))-(\a(x)+\b(u))\mprod_{A\oplus V}(y\c z+r(z)v)\\=&(x\c y)\c\a(z)+r(\a(z))(u\c_Vv)-\a(x)\c(y\c z)-\b(u)\c_V(r(z)v)\\\overset{\eqref{actass}}{=}&0.
\end{align*}
The proofs of the other axioms of Definition~\ref{triass} are similar.
Hence the result follows.
\end{proof}
We have the following theorem whose proof is similar to the proof of Theorem~\ref{graphHass}.
\begin{thm}
  Let $(A,V)$ be a \textsf{HAss-Act} pair. A linear map $\mathcal{H}:V\to A$ is a homomorphic relative averaging operator on the Hom-associative algebra $(A,\cdot,\alpha)$ if and only if  the graph $Gr(\mathcal{H}) = \{\mathcal{H}u+u\ |\ u \in V\}$ is a Hom-triassociative subalgebra of the hemisemi-direct product  Hom-associative trialgebra  $A\oplus_{Triass}V$.
\end{thm}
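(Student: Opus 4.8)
The plan is to mimic the proof of Theorem~\ref{graphHass} verbatim, only with more components. First I would observe that since $\mathcal{H}$ is by hypothesis a relative averaging operator, Theorem~\ref{graphHass} already tells us that $Gr(\mathcal{H})$ is closed under both $\lprod_{A\oplus V}$ and $\rprod_{A\oplus V}$, so the only new thing to check is closure under the middle product $\mprod_{A\oplus V}$. Computing directly, for $u,v\in V$,
\begin{align*}
(\mathcal{H}u+u)\mprod_{A\oplus V}(\mathcal{H}v+v)=\mathcal{H}u\cdot\mathcal{H}v+u\cdot_V v.
\end{align*}
So $Gr(\mathcal{H})$ is closed under $\mprod_{A\oplus V}$ if and only if $\mathcal{H}u\cdot\mathcal{H}v+u\cdot_V v$ lies in $Gr(\mathcal{H})$, i.e. equals $\mathcal{H}w+w$ for some $w$. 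Since the $V$-component is $u\cdot_V v$, the only candidate is $w=u\cdot_V v$, and then the condition becomes exactly $\mathcal{H}u\cdot\mathcal{H}v=\mathcal{H}(u\cdot_V v)$, which is precisely Eq.~\eqref{avop2}.

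Next I would assemble the two directions. For the forward direction: if $\mathcal{H}$ is a homomorphic relative averaging operator, then it is in particular a relative averaging operator, so by Theorem~\ref{graphHass} the graph is closed under $\lprod_{A\oplus V}$ and $\rprod_{A\oplus V}$; and by Eq.~\eqref{avop2} together with the computation above it is closed under $\mprod_{A\oplus V}$; moreover $\alpha_{A\oplus V}(\mathcal{H}u+u)=\alpha(\mathcal{H}u)+\beta(u)=\mathcal{H}(\beta u)+\beta u$ using $\mathcal{H}\beta=\alpha\mathcal{H}$, so $Gr(\mathcal{H})$ is stable under $\alpha_{A\oplus V}$; hence it is a Hom-triassociative subalgebra. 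For the converse: if $Gr(\mathcal{H})$ is a Hom-triassociative subalgebra of $A\oplus_{Triass}V$, then in particular it is a Hom-associative dialgebra, so by Theorem~\ref{graphHass} $\mathcal{H}$ is a relative averaging operator; and closure under $\mprod_{A\oplus V}$ forces Eq.~\eqref{avop2} by the same component comparison; stability under $\alpha_{A\oplus V}$ gives $\mathcal{H}\beta=\alpha\mathcal{H}$; hence $\mathcal{H}$ is a homomorphic relative averaging operator.

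I expect there to be no real obstacle here — the statement explicitly says the proof is "similar to the proof of Theorem~\ref{graphHass}", and indeed the whole content is the one-line computation of $(\mathcal{H}u+u)\mprod_{A\oplus V}(\mathcal{H}v+v)$ plus invoking the already-established dialgebra case. The only mild subtlety to state carefully is that the $V$-component of an element of $Gr(\mathcal{H})$ determines the element, which is what pins down the candidate $w=u\cdot_V v$ and makes the homomorphism identity both necessary and sufficient; this is the same mechanism as in Theorem~\ref{graphHass}. So in the writeup I would simply display the three products restricted to the graph, cite Theorem~\ref{graphHass} for the $\lprod,\rprod$ parts, and note that the $\mprod$ part is equivalent to \eqref{avop2}.
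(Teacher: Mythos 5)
Your proposal is correct and follows exactly the route the paper intends: the paper omits the details, stating only that the proof is ``similar to the proof of Theorem~\ref{graphHass}'', and your argument is precisely that — compute the restriction of $\mprod_{A\oplus V}$ to the graph, note that the $V$-component pins down the candidate element so that closure under $\mprod_{A\oplus V}$ is equivalent to Eq.~\eqref{avop2}, and invoke Theorem~\ref{graphHass} for the $\lprod_{A\oplus V}$, $\rprod_{A\oplus V}$ parts. Your explicit check of stability under $\alpha_{A\oplus V}$ (giving $\mathcal{H}\beta=\alpha\mathcal{H}$) is a welcome detail the paper leaves implicit, but it does not change the approach.
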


\begin{pro}
    Let $(A,V)$ be a \textsf{HAss-Act} pair. A linear map $\mathcal H: V \to A$ is a homomorphic relative averaging operator on $V$ over $A$ if and only if the map
$$N_{\mathcal H}: (A \oplus V) \to (A \oplus V),\ x+u \mapsto \mathcal H(u)$$
is a Nijenhuis operator on the  hemisemi-direct product $A \oplus_{Triass} V$.
\end{pro}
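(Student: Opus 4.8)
The plan is to expand the definition of a Nijenhuis operator on the Hom-associative trialgebra $A\oplus_{Triass}V$ --- the equivariance $N_{\mathcal H}\alpha_{A\oplus V}=\alpha_{A\oplus V}N_{\mathcal H}$ together with the Nijenhuis identity
\[
\mu\big(N_{\mathcal H}X,N_{\mathcal H}Y\big)=N_{\mathcal H}\big(\mu(N_{\mathcal H}X,Y)+\mu(X,N_{\mathcal H}Y)-N_{\mathcal H}\mu(X,Y)\big)
\]
for each of the three products $\mu\in\{\lprod_{A\oplus V},\rprod_{A\oplus V},\mprod_{A\oplus V}\}$ --- and to match, after evaluating on $X=x+u$ and $Y=y+v$, these four conditions one by one with the four defining requirements of a homomorphic relative averaging operator: $\mathcal H\beta=\alpha\mathcal H$, the left relative averaging identity, the right relative averaging identity, and the multiplicativity identity \eqref{avop2}. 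The well-posedness of the statement is guaranteed by the hemisemi-direct product Hom-associative trialgebra construction established just above (which itself rests on Proposition~\ref{hsdirect diass} and on the bimodule and action axioms \eqref{rAs1}--\eqref{rAs4}, \eqref{actass}).

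The computation is organised around one elementary remark: $N_{\mathcal H}$ is concentrated on $V$, indeed $N_{\mathcal H}(x+0)=\mathcal H(0)=0$, so $N_{\mathcal H}$ annihilates the summand $A$ (in particular $N_{\mathcal H}\mu(X,Y)$ always lands in $A$). Consequently, in the right-hand side of the Nijenhuis identity the outer $N_{\mathcal H}$ only sees the $V$-component of $\mu(N_{\mathcal H}X,Y)+\mu(X,N_{\mathcal H}Y)$, which shortens the bookkeeping considerably. Carrying this out: equivariance gives $\mathcal H\beta=\alpha\mathcal H$ immediately; for $\mu=\lprod_{A\oplus V}$ the surviving $V$-contribution is $l(\mathcal H(u))v$, so the identity becomes $\mathcal H(u)\cdot\mathcal H(v)=\mathcal H(l(\mathcal H(u))v)$, which is verbatim the computation already performed for the associative-dialgebra Nijenhuis characterisation (cf. the proof of Theorem~\ref{graphHass}); for $\mu=\rprod_{A\oplus V}$ one gets symmetrically $\mathcal H(u)\cdot\mathcal H(v)=\mathcal H(r(\mathcal H(v))u)$; and the remaining product $\mu=\mprod_{A\oplus V}$ is the one designed to produce the multiplicativity identity \eqref{avop2}. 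Since every step is an equivalence, the ``if'' direction is obtained by running the same chain of equalities in reverse.

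The only place where genuine care is needed is the $\mprod_{A\oplus V}$ case: here the middle product carries the term $u\cdot_V v$, so one must keep precise track of where the $\cdot_V$-contributions and the $\cdot$-contributions land --- namely $\mu(N_{\mathcal H}X,N_{\mathcal H}Y)=\mathcal H(u)\cdot\mathcal H(v)$, $N_{\mathcal H}\mu(X,Y)=\mathcal H(u\cdot_V v)$, together with the mixed terms $\mu(N_{\mathcal H}X,Y)$ and $\mu(X,N_{\mathcal H}Y)$ --- and check that once the $A$-valued part is discarded by the outer $N_{\mathcal H}$ what remains is exactly \eqref{avop2}, neither a weaker nor a stronger relation. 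The $\lprod_{A\oplus V}$ and $\rprod_{A\oplus V}$ verifications are formal copies of the dialgebra argument, and equivariance is trivial, so this $\mprod_{A\oplus V}$ bookkeeping is the real content of the proof. One may also cross-check the outcome against Lemma~\ref{ooperator}, which exhibits $-\mathcal H$ as an $\mathcal O$-operator of weight $-1$ and hence ties the present Nijenhuis picture to the well-known weight-$\lambda$ relative Rota-Baxter/Nijenhuis correspondence.
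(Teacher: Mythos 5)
Your reduction of the equivariance condition and of the Nijenhuis identities for $\lprod_{A\oplus V}$ and $\rprod_{A\oplus V}$ is fine and does reproduce $\mathcal H\beta=\alpha\mathcal H$ and the left and right relative averaging identities, exactly as in the dialgebra situation. But the step you yourself single out as ``the real content of the proof'' --- that the Nijenhuis identity for $\mprod_{A\oplus V}$ yields precisely the multiplicativity identity \eqref{avop2} --- does not survive the bookkeeping you defer. Since $N_{\mathcal H}(x+u)=\mathcal H(u)$ has zero $V$-component and $\mprod_{A\oplus V}$ pairs $V$-components only with $V$-components, one finds for $X=x+u$, $Y=y+v$ that $N_{\mathcal H}X\mprod_{A\oplus V}Y=\mathcal H(u)\cdot y$, that $X\mprod_{A\oplus V}N_{\mathcal H}Y=x\cdot\mathcal H(v)$, and that $N_{\mathcal H}(X\mprod_{A\oplus V}Y)=\mathcal H(u\cdot_V v)$, all of which lie in $A$. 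The outer $N_{\mathcal H}$ therefore annihilates the whole right-hand side, and the $\mprod$-Nijenhuis identity reads $\mathcal H(u)\cdot\mathcal H(v)=0$, not $\mathcal H(u)\cdot\mathcal H(v)=\mathcal H(u\cdot_V v)$. A homomorphic relative averaging operator (for instance the crossed-module map $d$, or a projection $P_i$) need not satisfy $\mathcal H(u)\cdot\mathcal H(v)=0$, so under your reading of ``Nijenhuis operator on the trialgebra'' (the Nijenhuis identity for each of the three products) even the forward implication fails, and your chain of equivalences cannot be run in either direction.

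What is missing is a correct treatment of the middle product. Either one needs an operator with a nontrivial $V$-component (compare the weighted $\mathcal O$-operator/Nijenhuis correspondence, where the relevant map is of the shape $x+u\mapsto \mathcal H(u)+\lambda u$ rather than $x+u\mapsto\mathcal H(u)$; your closing appeal to Lemma~\ref{ooperator} points in this direction but is never implemented), or one needs a precise definition of a Nijenhuis operator for a Hom-associative trialgebra --- the paper defines the notion only for a single bilinear operation --- under which the $\mprod$ condition genuinely encodes \eqref{avop2}. Note also that the paper states this proposition without proof, so there is no argument there to fall back on; as written, the central claim of your proposal about the $\mprod_{A\oplus V}$ case is incorrect and the proof does not go through.
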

\begin{thm}\label{triassfromass}
    Let $\mathcal{H}:V\to A$ be a homomorphic relative averaging operator on the \textsf{HAss-Act} pair $(A,V)$.
     Then the
following operations make $V$ into a Hom-associative trialgebra:
\begin{align*}
    u\rprod_\mathcal{H} v&=r(\mathcal{H}(v))u,\quad u\lprod_\mathcal{H} v=l(\mathcal{H}(u))v,\quad u\mprod_\mathcal{H} v=u\cdot_V v.
\end{align*}
\end{thm}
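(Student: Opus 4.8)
The plan is to verify directly that the three operations $\rprod_{\mathcal H},\lprod_{\mathcal H},\mprod_{\mathcal H}$ on $V$ satisfy the axioms of a Hom-associative trialgebra (Definition~\ref{triass}), using the defining relations of the action $(V,l,r,\beta,\cdot_V)$ together with the fact that $\mathcal H$ is a homomorphic relative averaging operator. Throughout, the two identities I will lean on are the relative averaging identities $\mathcal H(u)\cdot\mathcal H(v)=\mathcal H(l(\mathcal H(u))v)=\mathcal H(r(\mathcal H(v))u)$ and the homomorphism identity \eqref{avop2}, $\mathcal H(u)\cdot\mathcal H(v)=\mathcal H(u\cdot_V v)$; together these give the "master relation"
$$
\mathcal H\big(l(\mathcal H(u))v\big)=\mathcal H\big(r(\mathcal H(v))u\big)=\mathcal H(u\cdot_V v)=\mathcal H(u)\cdot\mathcal H(v),
$$
which lets me convert any product $\mathcal H(\text{something})$ back and forth among the three forms. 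I will also use $\mathcal H\beta=\alpha\mathcal H$ freely to move twists past $\mathcal H$.

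First I would dispose of the "easy" substructures. By Theorem~\ref{asstodiass}, $(V,\lprod_{\mathcal H},\rprod_{\mathcal H},\beta)$ is already a Hom-associative dialgebra (a homomorphic relative averaging operator is in particular a relative averaging operator). That $(V,\mprod_{\mathcal H},\beta)=(V,\cdot_V,\beta)$ is a Hom-associative algebra is immediate from the definition of an action: $(B,l,r,\alpha_B,\cdot_B)$ includes that $(B,\cdot_B,\alpha_B)$ be Hom-associative. So it remains to check the five mixed axioms \eqref{axiom6}--\eqref{axiom10}.

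The heart of the proof is these five verifications, each of which is a short computation. For \eqref{axiom6}, $(u\rprod_{\mathcal H}v)\rprod_{\mathcal H}\beta(w)=r(\mathcal H\beta(w))(r(\mathcal H(v))u)=r(\alpha(\mathcal H(w)))(r(\mathcal H(v))u)$, and on the other side $\beta(u)\mprod_{\mathcal H}(v\rprod_{\mathcal H}w)=\beta(u)\cdot_V(r(\mathcal H(w))v)$; these agree by the action axiom $r(\alpha_A(x))(b\cdot_B b')=\alpha_B(b)\cdot_B(r(x)b')$ from \eqref{actass} (with $b=u,\ b'=v,\ x=\mathcal H(w)$, noting $r(\mathcal H(v))u\cdot$-reads correctly once one also uses $r(x\cdot y)\beta=r(\alpha(y))r(x)$ from \eqref{rAs4} to rewrite $r(\alpha(\mathcal H(w)))r(\mathcal H(v))$). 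The remaining axioms \eqref{axiom7}--\eqref{axiom10} are checked the same way: expand both sides using the definitions of the three products, push the single $\mathcal H$ that appears through a twist if needed, and then match the two sides via exactly one of the action axioms \eqref{actass} possibly combined with one bimodule identity from \eqref{rAs1}--\eqref{rAs4}. Because each axiom of Definition~\ref{triass} mixes $\mprod$ with $\lprod$ or $\rprod$ in a "left/right" pattern, there is a precise correspondence: the three relations in \eqref{actass} are exactly the three compatibilities between $\cdot_V$ and the $l$-, $r$-actions that \eqref{axiom6}--\eqref{axiom10} demand.

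I expect the main obstacle to be purely bookkeeping: keeping track of which of the five identities needs which of the three action axioms, and correctly inserting the bimodule relations \eqref{rAs1}, \eqref{rAs4} to normalize expressions like $r(\alpha(x))r(y)$ or $l(\alpha(x))l(y)$ into $r(\ )\beta$ or $l(\ )\beta$ form before applying an action axiom. There is no conceptual difficulty and no need for the homomorphism identity \eqref{avop2} in verifying \eqref{axiom6}--\eqref{axiom10} themselves — that identity is only what makes $\mprod_{\mathcal H}=\cdot_V$ the "right" middle product — so one could even remark that the trialgebra structure on $V$ restricts (forgetting $\mprod_{\mathcal H}$) to the dialgebra structure of Theorem~\ref{asstodiass}. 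I would present the computation for \eqref{axiom6} in full and state that \eqref{axiom7}--\eqref{axiom10} follow analogously, as is done for the companion results earlier in the paper.
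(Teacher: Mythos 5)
Your overall plan (dialgebra part from Theorem~\ref{asstodiass}, Hom-associativity of $\mprod_{\mathcal H}=\cdot_V$ from the action data, then the five mixed axioms) is the same as the paper's, but your treatment of the mixed axioms has a genuine error. Your sample computation for \eqref{axiom6} misreads the axiom: on $V$ its right-hand side is $\beta(u)\rprod_{\mathcal H}(v\mprod_{\mathcal H}w)=r(\mathcal H(v\cdot_V w))\beta(u)$, not $\beta(u)\mprod_{\mathcal H}(v\rprod_{\mathcal H}w)=\beta(u)\cdot_V(r(\mathcal H(w))v)$. Moreover the equality you then assert, $r(\alpha(\mathcal H(w)))\big(r(\mathcal H(v))u\big)=\beta(u)\cdot_V\big(r(\mathcal H(w))v\big)$, is not an instance of the action axiom $r(\alpha_A(x))(b\cdot_B b')=\alpha_B(b)\cdot_B(r(x)b')$ from \eqref{actass}: that axiom requires the element $u\cdot_V v$ inside the left-hand side, whereas you have $r(\mathcal H(v))u$, and these are different elements of $V$ in general (only their images under $\mathcal H$ coincide, via the averaging and homomorphism identities). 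The parenthetical appeal to $r(x\cdot y)\beta=r(\alpha(y))r(x)$ does not repair this.

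More seriously, your claim that the homomorphism identity \eqref{avop2} is not needed for \eqref{axiom6}--\eqref{axiom10} is false, and it is exactly the point of the ``homomorphic'' hypothesis. For \eqref{axiom6} one has $(u\rprod_{\mathcal H}v)\rprod_{\mathcal H}\beta(w)=r(\alpha(\mathcal H(w)))r(\mathcal H(v))u=r(\mathcal H(v)\cdot\mathcal H(w))\beta(u)$ by \eqref{rAs4}, while $\beta(u)\rprod_{\mathcal H}(v\mprod_{\mathcal H}w)=r(\mathcal H(v\cdot_V w))\beta(u)$; equating the two is precisely \eqref{avop2}, and this is the step at which the paper's proof invokes it. Likewise \eqref{axiom10} reduces, via $l(x\cdot y)\beta=l(\alpha(x))l(y)$, to $\mathcal H(u\cdot_V v)=\mathcal H(u)\cdot\mathcal H(v)$. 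Only \eqref{axiom7}, \eqref{axiom8}, \eqref{axiom9} follow from \eqref{actass} alone. If \eqref{avop2} were dispensable, the conclusion would hold for an arbitrary relative averaging operator on a \textsf{HAss-Act} pair, which is false: take $\cdot_V=0$ (any bimodule with the zero product is an action) and, say, the identity operator on the adjoint bimodule of a Hom-associative algebra with nonzero product; then the left side of \eqref{axiom6} is $(u\cdot v)\cdot\alpha(w)\neq 0$ while the right side vanishes. So the two axioms whose verification genuinely uses the homomorphic hypothesis are exactly the ones your argument does not handle, and the proof as proposed does not go through without reinstating \eqref{avop2} at those steps.
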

\begin{proof}
    According to Theorem \ref{asstodiass}, $(V,\lprod_\mathcal{H},\rprod_\mathcal{H},\b)$ is a Hom-associative dialgebra.
 It's obvious that $(V,\mprod_\mathcal{H},\b)$ is a Hom-associative algebra.
    For any $u,v,w\in V$
    \begin{eqnarray*}
        (u\rprod_{\mathcal{H}} v)\rprod_{\mathcal{H}} \b(w)-\b(u)\rprod_{\mathcal{H}}(v\mprod_{\mathcal{H}} w)&=&(r(\mathcal{H}(v))u)\rprod_{\mathcal{H}}\b(w)-\b(u)\rprod_{\mathcal{H}}(v\cdot_Vw)\\&=&r(\mathcal{H}(\b(w))(r(\mathcal{H}(v))u)-r(\mathcal{H}(v\cdot_Vw))(\b(u))\\&=&r(\a(\mathcal{H}(w))(r(\mathcal{H}(v))u)-r(\mathcal{H}(v\cdot_Vw))(\b(u))\\& \overset{\eqref{avop2}}{=}&0.
    \end{eqnarray*}
    Analogously, one can check the other axioms of Definition \ref{triass}. Thus, $(V,\lprod_{\mathcal{H}},\rprod_{\mathcal{H}},\mprod_{\mathcal{H}},\b)$ is a Hom-associative trialgebra.
\end{proof}

\begin{pro}
    Let $(A,\lprod,\rprod,\mprod,\a)$ be a Hom-associative trialgebra, then the following operations make $A$ into a Hom-tridendriform algebra $$x\prec y=-x\rprod y,\quad x\succ y=-x\lprod y,\quad x\cdot y=x\mprod y.$$
\end{pro}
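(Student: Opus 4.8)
The plan is a direct verification: with $x\prec y=-\,x\rprod y$, $x\succ y=-\,x\lprod y$ and $x\cdot y=x\mprod y$, I would check the six defining identities of a Hom-tridendriform algebra one at a time, expanding each in terms of $\lprod,\rprod,\mprod$ and matching it against the trialgebra axioms \eqref{axiom6}--\eqref{axiom10}, the Hom-associative dialgebra identities \eqref{dialgebra}, and the two bar identities of the underlying Hom-$0$-dialgebra. As a preliminary observation, since $(A,\mprod,\alpha)$ is by definition a Hom-associative algebra and $\cdot=\mprod$, the pair $(A,\cdot,\alpha)$ is Hom-associative, which is part of the required data; moreover multiplicativity of $\alpha$ with respect to $\lprod,\rprod,\mprod$ at once yields multiplicativity with respect to $\prec,\succ,\cdot$, since the minus signs pass through $\alpha$.

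Next I would dispose of the four "single-axiom" identities, where after substitution the two minus signs carried by $\prec$ (resp. $\succ$) simply cancel. Thus $(x\succ y)\prec\alpha(z)=\alpha(x)\succ(y\prec z)$ unwinds to $(x\lprod y)\rprod\alpha(z)=\alpha(x)\lprod(y\rprod z)$, i.e. $(x,y,z)^{\a}_{\times}=0$ from Eq. \eqref{dialgebra}; the identity $(x\prec y)\cdot\alpha(z)=\alpha(x)\cdot(y\succ z)$ unwinds to \eqref{axiom8}; $(x\succ y)\cdot\alpha(z)=\alpha(x)\succ(y\cdot z)$ to \eqref{axiom9}; and $(x\cdot y)\prec\alpha(z)=\alpha(x)\cdot(y\prec z)$ to \eqref{axiom7}. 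In each of these four cases the Hom-tridendriform axiom is, after sign cancellation, literally one of the trialgebra axioms, so nothing beyond bookkeeping is needed.

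The only place where care is genuinely required — and hence the main, mild, obstacle — is the two identities whose right-hand side is a sum of three terms, because there the sign conventions and the choice of which auxiliary identities to combine both matter. For $(x\prec y)\prec\alpha(z)=\alpha(x)\prec(y\prec z+y\succ z+y\cdot z)$, the left side becomes $(x\rprod y)\rprod\alpha(z)$, while the right side becomes $\alpha(x)\rprod(y\rprod z)+\alpha(x)\rprod(y\lprod z)-\alpha(x)\rprod(y\mprod z)$; I would then show each of the three summands equals $(x\rprod y)\rprod\alpha(z)$ — the first by $(x,y,z)^{\a}_{\dashv}=0$, the second by the right bar identity $\alpha(x)\rprod(y\rprod z)=\alpha(x)\rprod(y\lprod z)$ followed by the same, and the third by \eqref{axiom6} — so the three terms, carrying signs $+,+,-$ and all equal to $(x\rprod y)\rprod\alpha(z)$, sum to $(x\rprod y)\rprod\alpha(z)$, matching the left side. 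Symmetrically, $\alpha(x)\succ(y\succ z)=(x\prec y+x\succ y+x\cdot y)\succ\alpha(z)$ reduces to $\alpha(x)\lprod(y\lprod z)=(x\rprod y)\lprod\alpha(z)+(x\lprod y)\lprod\alpha(z)-(x\mprod y)\lprod\alpha(z)$, and the three terms on the right are each equal to $\alpha(x)\lprod(y\lprod z)$ by the left bar identity $(x\rprod y)\lprod\alpha(z)=(x\lprod y)\lprod\alpha(z)$, by $(x,y,z)^{\a}_{\vdash}=0$, and by \eqref{axiom10}, respectively. Having verified all six identities together with the preliminary observation, the proof is complete.
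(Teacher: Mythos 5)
Your verification is correct: the sign bookkeeping, the reduction of the four single-term tridendriform identities to $(x,y,z)^{\a}_{\times}=0$ and axioms \eqref{axiom7}--\eqref{axiom9}, and the $L+L-L=L$ treatment of the two three-term identities via the bar identities, the dialgebra associativities and axioms \eqref{axiom6}, \eqref{axiom10} all check out against Definition \ref{triass}. This is exactly the direct verification the paper intends (it omits the proof, leaving only the remark pointing to \cite{Makhlouf1} for the tridendriform axioms), so your argument matches the paper's approach and in fact supplies the details it leaves out.
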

For the definition of Hom-tridendriform algebra, see \cite{Makhlouf1}.
\emptycomment{\begin{proof}
    For any $x,y,z\in A$ we have
    \begin{align*}
        &(x\prec y)\prec \a(z)-\a(x)\prec(y \prec z+y\succ z+y\cdot z)\\=&(x\rprod y)\rprod\a(z)-\a(x)\rprod(y\rprod z+y\lprod z-y\mprod z)\\=&\a(x)\rprod(y\mprod z)-\a(x)\rprod(y\mprod z)
    \end{align*}
\end{proof}
\begin{cor}
    Let $(A,\cdot,\a)$ and $P:A\to A$ a homomorphic averaging operator. Then, $(A,-\rprod,-\lprod,\mprod)$ is Hom-tridendriform algebra, where the operations $\rprod,\lprod,\mprod$ are defined in Proposition \ref{triassfromass}.
\end{cor}}
\subsection{From Hom-associative trialgebras to Hom-triLeibniz algebras}
In the following, we introduce the notion of homomorphic relative averaging operator on Hom-Lie algebra with a given action.
\begin{defi}
       A linear map $\mathcal H: V \to A$ is called a homomorphic relative averaging operator on the \textsf{HLie-Act} pair $(A,V)$ if it's a relative averaging operator and a morphism, that is
      \begin{align}
  & \mathcal{H}[u,v]_V=[\mathcal H(u), \mathcal H(v)],\ \forall u,v \in V.
      \end{align}
\end{defi}

In \cite{pei-rep} the authors give the definition of (right) Leibniz trialgebra, next we will introduce the notion of left Hom-Leibniz trialgebra.
\begin{defi}
A left Hom-Leibniz trialgebra is a quadruple $(A,\{\cdot,\cdot\},[\cdot,\cdot],\alpha)$ consisting of a Hom-Lie algebra $(A,[\cdot,\cdot],\a)$ and a Hom-Leibniz algbra $(A,\{\cdot,\cdot\},\a)$ such that, for any $x,y, z \in A$
    \begin{eqnarray}
& \{\a(x),[y,z]\}=[\{x,y\},\a(z)]+[\a(y),\{x,z\}] ,\label{eq:trileib1}\\
& \{[x,y],\a(z)\}=\{\{x,y\},\a(z)\}.& \label{eq:trileib2}
\end{eqnarray}
\end{defi}
\begin{rmk}
    If we have $\{x,y\}^\prime=\{y,x\}$ then $(A,\{\c,\c\}^\prime,[\c,\c],\a)$ is a right Hom-Leibniz trialgebra.
\end{rmk}

On the direct sum vector space $A\oplus V$,  define the two bilinear maps $[\cdot,\cdot]_{A\oplus V}$ and $\{\cdot,\cdot\}_{A\oplus V}$ and the linear map $\alpha_{A\oplus V}$ by
\begin{eqnarray}
[x+u,y+v]_{A\oplus V}&=&[x,y]+[u,v]_V,\\
    \{x+u,y+v\}_{A\oplus V}&=&[x,y]+\rho(x)v,\\
 \alpha_{A\oplus V}(x+u)&=&\alpha(x)+\beta(u).
\end{eqnarray}

\begin{pro}
  With the above notations, $(A\oplus V,\{\cdot,\cdot\}_{A\oplus V},[\cdot,\cdot]_{A\oplus V},\alpha_{A\oplus V})$ is a Hom-Leibniz trialgebra, called hemisemi-direct product Hom-Leibniz trialgebra and denoted by $A\oplus_{TriLeib}V$.

\end{pro}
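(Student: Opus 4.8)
The plan is to check, one at a time, the conditions in the definition of a left Hom-Leibniz trialgebra, each time reducing to a result already established above.

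First, the bracket $[\cdot,\cdot]_{A\oplus V}$ has no mixed terms: it is simply the direct-product bracket of the Hom-Lie algebra $(A,[\cdot,\cdot],\alpha)$ and the Hom-Lie algebra $(V,[\cdot,\cdot]_V,\beta)$ (the latter being part of the data of a \textsf{HLie-Act} pair), and $\alpha_{A\oplus V}=\alpha\oplus\beta$ is visibly multiplicative for it. Hence $(A\oplus V,[\cdot,\cdot]_{A\oplus V},\alpha_{A\oplus V})$ is a Hom-Lie algebra, the Hom-Jacobi identity being inherited componentwise. Next, since every action of a Hom-Lie algebra is in particular a representation, $(V,\rho,\beta)$ is a representation of $A$, and the pair $\{\cdot,\cdot\}_{A\oplus V}$, $\alpha_{A\oplus V}$ is precisely the hemisemi-direct product Hom-Leibniz algebra $A\oplus_{Leib}V$ of Proposition~\ref{hsdirect Lie}; so $(A\oplus V,\{\cdot,\cdot\}_{A\oplus V},\alpha_{A\oplus V})$ is a Hom-Leibniz algebra.

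It then remains to verify the two compatibility identities. For \eqref{eq:trileib1}, write $X=x+u$, $Y=y+v$, $Z=z+w$, expand both sides, and separate $A$- from $V$-components. The left-hand side gives $[\alpha(x),[y,z]]$ on $A$ and $\rho(\alpha(x))[v,w]_V$ on $V$; the right-hand side gives $[[x,y],\alpha(z)]+[\alpha(y),[x,z]]$ on $A$ and $[\rho(x)v,\beta(w)]_V+[\beta(v),\rho(x)w]_V$ on $V$. The $A$-parts agree by the Hom-Jacobi identity in $A$, used in the rearranged form $[\alpha(x),[y,z]]=[[x,y],\alpha(z)]+[\alpha(y),[x,z]]$ (cyclic permutation together with antisymmetry of the bracket), and the $V$-parts agree by the action compatibility \eqref{actLie} with the substitution $(u,v)\mapsto(v,w)$. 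For \eqref{eq:trileib2}, note that $\{\cdot,\cdot\}_{A\oplus V}$ only uses the $A$-component of its first argument; since both $[X,Y]_{A\oplus V}$ and $\{X,Y\}_{A\oplus V}$ have $A$-component equal to $[x,y]$, both sides of \eqref{eq:trileib2} expand to $[[x,y],\alpha(z)]+\rho([x,y])\beta(w)$, so the identity holds with nothing to prove.

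I do not expect a genuine obstacle; the only step requiring any care is the bookkeeping in \eqref{eq:trileib1}, where one must rewrite the Hom-Jacobi identity into the asymmetric form above and invoke the correct instance of \eqref{actLie}. Everything else consists of componentwise computations entirely analogous to those in the proofs of Propositions~\ref{hsdirect diass} and~\ref{hsdirect Lie}.
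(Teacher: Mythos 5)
Your proof is correct and follows essentially the same route as the paper: invoke Proposition~\ref{hsdirect Lie} for the Hom-Leibniz structure of $\{\cdot,\cdot\}_{A\oplus V}$, observe the componentwise Hom-Lie structure of $[\cdot,\cdot]_{A\oplus V}$, and verify \eqref{eq:trileib1} by splitting into $A$- and $V$-components (Hom-Jacobi plus \eqref{actLie}). Your explicit observation that \eqref{eq:trileib2} holds because $\{\cdot,\cdot\}_{A\oplus V}$ depends only on the $A$-component of its first argument is a nice touch that the paper leaves to the reader.
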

\begin{proof}
    It follows from Proposition \ref{hsdirect Lie} that $(A\oplus V,\{\c,\c\}_{A\oplus V},\a)$ is a Hom-Leibniz algebra.
On the other hand, it is straightforward to observe that $(A\oplus V,[\c,\c]_{A\oplus V},\a)$ is a Hom-Lie algebra.
Let $x,y,z\in A$ and $u,v,w\in V$, according to Eq. \eqref{actLie}, we have
    \begin{align*}
        &\{\a_{A\oplus V}(x+u),[y+v,z+w]_{A\oplus V}\}_{A\oplus V}-[\{x+u,y+v\}_{A\oplus V},\a_{A\oplus V}(z+w)]_{A\oplus V}\\&-[\a_{A\oplus V}(y+v),\{x+u,z+w\}_{A\oplus V}]_{A\oplus V}\\=&[\a(x),[y,z]]+\rho(\a(x))[v,w]_V-[[x,y],\a(z)]-[\rho(x)v,\b(w)]_V-[\a(y),[x,z]]-[\b(v),\rho(x)w]_V\\=& 0.
    \end{align*}
    Hence, the Eq. \eqref{eq:trileib1} is satisfied. The same for Eq. \eqref{eq:trileib2}. This completes the proof.
\end{proof}
\begin{thm}
Let $(A,V)$ be a \textsf{HLie-Act} pair. A linear map $\mathcal{H}:V\to A$ is a homomorphic relative averaging operator on the Hom-Lie algebra $(A,[\cdot,\cdot],\alpha)$ if and only if  the graph $Gr(\mathcal{H}) = \{\mathcal{H}u+u\ |\ u \in V\}$ is a Hom-Leibniz subtrialgebra of the hemisemi-direct product Hom-Leibniz trialgebra $A\oplus_{TriLeib}V$.
\end{thm}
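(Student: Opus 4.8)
The plan is to follow the pattern of Theorem~\ref{graphHass} and Theorem~\ref{graphHLie}, testing the three structure maps of $A\oplus_{TriLeib}V$ one at a time against the graph. Since $\mathcal{H}$ is linear, $Gr(\mathcal{H})=\{\mathcal{H}u+u\mid u\in V\}$ is automatically a linear subspace of $A\oplus V$, so being a Hom-Leibniz subtrialgebra means exactly that it is invariant under $\alpha_{A\oplus V}$, under the Lie bracket $[\cdot,\cdot]_{A\oplus V}$, and under the Leibniz bracket $\{\cdot,\cdot\}_{A\oplus V}$; the ambient object is already known to be a Hom-Leibniz trialgebra by the preceding proposition, so once closure holds the inherited axioms are automatic.

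First I would examine the twisting map: for $u\in V$ one has $\alpha_{A\oplus V}(\mathcal{H}u+u)=\alpha(\mathcal{H}u)+\beta(u)$, and this lies in $Gr(\mathcal{H})$ if and only if $\alpha\mathcal{H}(u)=\mathcal{H}\beta(u)$. Thus $\alpha_{A\oplus V}$-invariance of the graph is equivalent to $\mathcal{H}\beta=\alpha\mathcal{H}$. Next, using the formulas for the two brackets on $A\oplus_{TriLeib}V$, for $u,v\in V$ we compute
\begin{align*}
[\mathcal{H}u+u,\mathcal{H}v+v]_{A\oplus V}&=[\mathcal{H}u,\mathcal{H}v]+[u,v]_V,\\
\{\mathcal{H}u+u,\mathcal{H}v+v\}_{A\oplus V}&=[\mathcal{H}u,\mathcal{H}v]+\rho(\mathcal{H}u)v.
\end{align*}
The first element belongs to $Gr(\mathcal{H})$ if and only if $[\mathcal{H}u,\mathcal{H}v]=\mathcal{H}([u,v]_V)$, i.e. $\mathcal{H}$ is a morphism; the second belongs to $Gr(\mathcal{H})$ if and only if $[\mathcal{H}u,\mathcal{H}v]=\mathcal{H}(\rho(\mathcal{H}u)v)$, i.e. $\mathcal{H}$ is a relative averaging operator on the underlying \textsf{HLie-Rep} pair. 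Conjoining the three equivalences, $Gr(\mathcal{H})$ is a Hom-Leibniz subtrialgebra of $A\oplus_{TriLeib}V$ precisely when $\mathcal{H}\beta=\alpha\mathcal{H}$, $\mathcal{H}$ is a morphism, and $\mathcal{H}$ is a relative averaging operator---which is exactly the definition of a homomorphic relative averaging operator on the \textsf{HLie-Act} pair $(A,V)$.

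There is no serious obstacle here: the argument is a direct bookkeeping check, entirely parallel to the earlier graph characterizations. The only points requiring a little care are that ``subtrialgebra'' forces closure under all three operations simultaneously (so the $\alpha$-invariance condition must not be dropped), and that the bracket computations presuppose that $(A\oplus V,\{\cdot,\cdot\}_{A\oplus V},[\cdot,\cdot]_{A\oplus V},\alpha_{A\oplus V})$ is a genuine Hom-Leibniz trialgebra, which is supplied by the preceding proposition. Notice also, as a sanity check, that closure under the two brackets together yields $\mathcal{H}([u,v]_V)=[\mathcal{H}u,\mathcal{H}v]=\mathcal{H}(\rho(\mathcal{H}u)v)$, recovering both defining identities of a homomorphic relative averaging operator at once.
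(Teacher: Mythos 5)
Your proposal is correct and is essentially the paper's own argument: the paper simply states that the proof is ``similar to Theorem~\ref{graphHLie}'', i.e.\ the same graph-closure computation against the two brackets and the twisting map of $A\oplus_{TriLeib}V$ that you carry out explicitly. Your added care about $\alpha_{A\oplus V}$-invariance is consistent with (indeed slightly more complete than) the paper's sketch, and nothing in your argument deviates from the intended route.
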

\begin{proof}
Similar to Theorem \ref{graphHLie}.
\end{proof}
\begin{pro}
    Let $(A,V)$ be a \textsf{HLie-Act} pair. A linear map $\mathcal H: V \to A$ is a homomorphic relative averaging operator on $V$ over $A$ if and only if the map
$$N_{\mathcal H}: (A \oplus V) \to (A \oplus V),\ x+u \mapsto \mathcal H(u)$$
is a Nijenhuis operator on the  hemisemi-direct product $A \oplus_{TriLeib} V$.
\end{pro}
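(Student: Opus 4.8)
The plan is to prove both implications by a direct unravelling of the Nijenhuis conditions on the explicit model $A\oplus_{TriLeib}V$, in the style of the corresponding statements for $A\oplus_{Leib}V$, $A\oplus_{Diass}V$ and $A\oplus_{Triass}V$. Recall that the structure maps of $A\oplus_{TriLeib}V$ are $[x+u,y+v]_{A\oplus V}=[x,y]+[u,v]_V$, $\{x+u,y+v\}_{A\oplus V}=[x,y]+\rho(x)v$ and $\alpha_{A\oplus V}(x+u)=\alpha(x)+\beta(u)$. The single structural fact that drives the whole computation is that $N_{\mathcal H}$ carries all of $A\oplus V$ into the summand $A$ and vanishes on $A$; in particular $N_{\mathcal H}^2=0$, so the cubic term in each Nijenhuis identity disappears, and $N_{\mathcal H}$ annihilates any element whose $V$-component is zero.

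With this in hand I would check the defining requirements of a Nijenhuis operator on $A\oplus_{TriLeib}V$ one at a time. Commutation with the twist: $N_{\mathcal H}\alpha_{A\oplus V}(x+u)=\mathcal H(\beta(u))$ and $\alpha_{A\oplus V}N_{\mathcal H}(x+u)=\alpha(\mathcal H(u))$, so this is exactly $\mathcal H\beta=\alpha\mathcal H$. The Nijenhuis identity for the Leibniz bracket $\{\cdot,\cdot\}_{A\oplus V}$ evaluated at $X=x+u$, $Y=y+v$: the left side is $\{\mathcal H(u),\mathcal H(v)\}_{A\oplus V}=[\mathcal H(u),\mathcal H(v)]$, while on the right only $\{N_{\mathcal H}X,Y\}_{A\oplus V}$ contributes a nonzero $V$-component, namely $\rho(\mathcal H(u))v$, so applying the outer $N_{\mathcal H}$ leaves $\mathcal H(\rho(\mathcal H(u))v)$; hence this identity is equivalent to $[\mathcal H(u),\mathcal H(v)]=\mathcal H(\rho(\mathcal H(u))v)$, i.e. $\mathcal H$ is a relative averaging operator. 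Finally, the requirement carried by the Lie bracket $[\cdot,\cdot]_{A\oplus V}$: since $N_{\mathcal H}$ kills the $A$-component of every mixed term, it collapses to $[\mathcal H(u),\mathcal H(v)]=N_{\mathcal H}[X,Y]_{A\oplus V}=\mathcal H([u,v]_V)$, i.e. $\mathcal H$ is a morphism of the Hom-Lie brackets. These three equivalences are exactly the definition of a homomorphic relative averaging operator on the \textsf{HLie-Act} pair $(A,V)$, and reading the computation backwards gives the converse.

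The proof is essentially pure bookkeeping; the only point that genuinely needs care is tracking, at each stage, which of the summands $A$ and $V$ a given intermediate expression lies in, because $N_{\mathcal H}$ acts on them in completely asymmetric ways ($\mathcal H$ on the $V$-part, zero on the $A$-part), so the many terms landing in $A$ must be recognized as being killed by the outer $N_{\mathcal H}$. Once the $V$-components are isolated correctly each Nijenhuis identity collapses to a single defining equation of $\mathcal H$ with no residue, so there is no real obstacle beyond this organization; the argument runs parallel to, and is no more delicate than, the proof for $A\oplus_{Triass}V$. Alternatively, one could deduce the statement from the graph characterization established just above, observing that $A$ and $Gr(\mathcal H)$ are complementary subalgebras of $A\oplus_{TriLeib}V$ precisely when $\mathcal H$ is a homomorphic relative averaging operator.
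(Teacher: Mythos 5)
Your handling of the twist condition and of the Leibniz bracket $\{\cdot,\cdot\}_{A\oplus V}$ is correct: since $\{N_{\mathcal H}X,Y\}_{A\oplus V}$ has $V$-component $\rho(\mathcal H(u))v$ while $\{X,N_{\mathcal H}Y\}_{A\oplus V}$ and $N_{\mathcal H}\{X,Y\}_{A\oplus V}$ lie in $A$, that identity does reduce to $[\mathcal H(u),\mathcal H(v)]=\mathcal H(\rho(\mathcal H(u))v)$. The step that fails is the one for the bracket $[\cdot,\cdot]_{A\oplus V}$. With $X=x+u$, $Y=y+v$, all three inner terms are pure $A$-elements: $[N_{\mathcal H}X,Y]_{A\oplus V}=[\mathcal H(u),y]$, $[X,N_{\mathcal H}Y]_{A\oplus V}=[x,\mathcal H(v)]$, and $N_{\mathcal H}[X,Y]_{A\oplus V}=\mathcal H([u,v]_V)$. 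By your own guiding principle the outer $N_{\mathcal H}$ kills each of them; in particular $N_{\mathcal H}\bigl(N_{\mathcal H}[X,Y]_{A\oplus V}\bigr)=N_{\mathcal H}^2[X,Y]_{A\oplus V}=0$, it does \emph{not} survive as $\mathcal H([u,v]_V)$. So the Nijenhuis identity for $[\cdot,\cdot]_{A\oplus V}$ reads $[\mathcal H(u),\mathcal H(v)]=0$, not $[\mathcal H(u),\mathcal H(v)]=\mathcal H([u,v]_V)$ as you claim; you have silently dropped the outer $N_{\mathcal H}$ from the term $N_{\mathcal H}[X,Y]$, and that was the only place the homomorphism condition could have come from. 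Consequently your argument does not establish the equivalence: what the Nijenhuis conditions (imposed, as in the paper's definition, operation by operation) actually give is $\mathcal H\beta=\alpha\mathcal H$, the relative averaging identity, and the degenerate condition $[\mathcal H(u),\mathcal H(v)]=0$, so in particular the ``if'' direction is not proved.

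This is not a gap that can be repaired by redoing the bookkeeping: the same collapse occurs for the middle operation $\mprod_{A\oplus V}$ in the $A\oplus_{Triass}V$ case (there one gets $\mathcal H(u)\cdot\mathcal H(v)=0$ rather than $\mathcal H(u)\cdot\mathcal H(v)=\mathcal H(u\cdot_V v)$), so a correct proof must first settle what ``Nijenhuis operator'' is to mean on a structure with two operations here --- e.g.\ impose the condition only where it is nondegenerate, or use a modified identity from which the morphism property genuinely follows --- before the stated equivalence can be argued. (The paper itself offers no proof of this proposition, so nothing there resolves the point.) Your closing alternative, deducing the result from the graph characterization, is also not a proof as it stands: $Gr(\mathcal H)$ being a subtrialgebra is a different condition from $N_{\mathcal H}$ satisfying the Nijenhuis identities, and no implication between the two is established by merely observing that $A$ and $Gr(\mathcal H)$ are complementary.
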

\begin{thm}\label{triasstotriLeib}
    Let $(A,\lprod,\rprod,\mprod,\a)$ be a Hom-associative trialgebra. Define new binary operations
    \begin{align*}
        \{x, y\}&=x\lprod y-y\rprod x,\quad\quad [x,y]=x\mprod y-y\mprod x.
    \end{align*}
    Then, $(A,\{\c,\c\},[\cdot,\cdot],\alpha)$ is a Hom-Leibniz trialgebra.
\end{thm}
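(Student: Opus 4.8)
The strategy is to check the three defining requirements of a left Hom-Leibniz trialgebra in turn: that $(A,[\cdot,\cdot],\alpha)$ is a Hom-Lie algebra, that $(A,\{\cdot,\cdot\},\alpha)$ is a Hom-Leibniz algebra, and that the two mixed compatibility identities \eqref{eq:trileib1} and \eqref{eq:trileib2} hold. The first two requirements cost nothing new. The bracket $[x,y]=x\mprod y-y\mprod x$ is precisely the commutator of the Hom-associative algebra $(A,\mprod,\alpha)$ that is part of the Hom-associative trialgebra structure, so $(A,[\cdot,\cdot],\alpha)$ is a Hom-Lie algebra by the Hom-Lie admissibility of Hom-associative algebras recalled in Section~\ref{Preliminaries}. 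Likewise $\{x,y\}=x\lprod y-y\rprod x$ is exactly the dicommutator \eqref{dicomm} of the Hom-associative dialgebra $(A,\lprod,\rprod,\alpha)$ underlying $A$, so Proposition~\ref{asstoLeibniz} gives that $(A,\{\cdot,\cdot\},\alpha)$ is a Hom-Leibniz algebra.

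It then remains to verify the two mixed identities by direct expansion. For \eqref{eq:trileib1}, I would expand $\{\alpha(x),[y,z]\}$, $[\{x,y\},\alpha(z)]$ and $[\alpha(y),\{x,z\}]$ using bilinearity and the definitions of $\{\cdot,\cdot\}$ and $[\cdot,\cdot]$. The left-hand side becomes four terms, of the shapes $\alpha(x)\lprod(y'\mprod z')$ and $(y'\mprod z')\rprod\alpha(x)$, while the right-hand side becomes eight terms, each having the outer operation $\mprod$ applied to a $\lprod$-product or an $\rprod$-product. Axiom \eqref{axiom9} rewrites every left-hand term of the form $\alpha(x)\lprod(y'\mprod z')$ as $(x\lprod y')\mprod\alpha(z')$, and axiom \eqref{axiom7} rewrites every term $(y'\mprod z')\rprod\alpha(x)$ as $\alpha(y')\mprod(z'\rprod x)$; after these substitutions the left-hand side matches four of the eight right-hand terms exactly. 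The remaining four right-hand terms form two pairs of opposite sign whose two members are identified by axiom \eqref{axiom8}, so they cancel, and \eqref{eq:trileib1} follows.

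For \eqref{eq:trileib2}, expanding $\{[x,y],\alpha(z)\}$ and $\{\{x,y\},\alpha(z)\}$ gives four terms on each side, which I would match one at a time. On the terms $(x\mprod y)\lprod\alpha(z)$ and $(y\mprod x)\lprod\alpha(z)$, axiom \eqref{axiom10} yields $\alpha(x)\lprod(y\lprod z)$ and $\alpha(y)\lprod(x\lprod z)$; the corresponding terms $(x\lprod y)\lprod\alpha(z)$ and $(y\rprod x)\lprod\alpha(z)$ on the other side give the same expressions after applying the Hom-associativity of $\lprod$ from \eqref{dialgebra}, together with the left bar identity of the Hom-$0$-dialgebra $(A,\lprod,\rprod,\alpha)$ to turn $(y\rprod x)\lprod\alpha(z)$ into $(y\lprod x)\lprod\alpha(z)$. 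On the terms $\alpha(z)\rprod(x\mprod y)$ and $\alpha(z)\rprod(y\mprod x)$, axiom \eqref{axiom6} yields $(z\rprod x)\rprod\alpha(y)$ and $(z\rprod y)\rprod\alpha(x)$; the corresponding terms $\alpha(z)\rprod(x\lprod y)$ and $\alpha(z)\rprod(y\rprod x)$ give the same expressions after applying the Hom-associativity of $\rprod$ from \eqref{dialgebra} together with the right bar identity. Hence the two sides agree term by term and \eqref{eq:trileib2} holds; collecting the three parts shows $(A,\{\cdot,\cdot\},[\cdot,\cdot],\alpha)$ is a Hom-Leibniz trialgebra.

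The only genuine obstacle I anticipate is the bookkeeping in \eqref{eq:trileib1}: one must balance eight terms against four and invoke the mixed axioms \eqref{axiom6}--\eqref{axiom10} each in exactly the right direction for the cancellation to go through. No individual step is difficult, but it is easy to mislabel a term or apply an axiom with the wrong substitution. By contrast, \eqref{eq:trileib2} is short once one recalls that the bar identities of the underlying Hom-$0$-dialgebra are precisely what converts the $\lprod$ and $\rprod$ products into the ``associative-like'' form in which the trialgebra axioms \eqref{axiom6}--\eqref{axiom10} apply.
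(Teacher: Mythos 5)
Your proposal is correct and follows essentially the same route as the paper: delegate the Hom-Leibniz part to Proposition~\ref{asstoLeibniz} and the Hom-Lie part to Hom-Lie admissibility of $(A,\mprod,\alpha)$, then verify \eqref{eq:trileib1} and \eqref{eq:trileib2} by direct expansion using the trialgebra axioms \eqref{axiom6}--\eqref{axiom10} together with the dialgebra and bar identities. Your bookkeeping of which axiom handles which term (and the two cancelling pairs via \eqref{axiom8}) checks out, so no changes are needed.
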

\begin{proof}
    Since $(A,\lprod,\rprod,\mprod,\a)$ is a Hom-associative trialgebra, then $(A,\lprod,\rprod,\a)$ is a Hom-associative dialgebra. Thus, according to Remark \ref{asstoLeibniz} $(A,\{\c,\c\},\a)$ is a Hom-Leibniz algebra.

    Since $(A,\mprod,\a)$ is a Hom-associative algebra, then $(A,[\cdot,\cdot],\a)$ is a Hom-Lie algebra.
 Still to verify \eqref{eq:trileib1} and \eqref{eq:trileib2}.

 For any $x,y,z\in A$, we have from Definition \ref{triass} that
 \begin{eqnarray*}
     [\{x,y\},\a(z)]+[\a(y),\{x,z\}]-\{\a(x),[y,z]\}&=&(x\lprod y)\mprod \a(z)-(y\rprod x)\mprod\a(z)-\a(z)\mprod (x\lprod y)\\&+&\a(z)\mprod(y\rprod x)+\a(y)\mprod(x\lprod z)-\a(y)\mprod(z\rprod x)\\&-&(x\lprod z)\mprod\a(y)+(z\rprod x)\mprod \a(y)-\a(x)\lprod(y\mprod z)\\&+&\a(x)\lprod(z\mprod y)+(y\mprod z)\rprod\a(x)-(z\mprod y)\rprod\a(x)\\&=&0.
 \end{eqnarray*}
 On the other hand, we have
 \begin{eqnarray*}
     \{[x,y],\a(z)\}-\{\{x,y\},\a(z)\}&=&(x\mprod y)\lprod\a(z)-(y\mprod x)\lprod\a(z)-\a(z)\rprod(x\mprod y)+\a(z)\rprod(y\mprod x)\\&-&(x\lprod y)\lprod\a(z)+(y\rprod x)\lprod\a(z)+\a(z)\rprod(x\lprod y)-\a(z)\rprod(y\rprod x)\\& \overset{\ref{triass}}{=}&\a(x)\lprod(y\lprod z)-\a(y)\lprod(x\lprod z)-(z\rprod x)\rprod\a(y)+(z\rprod y)\rprod\a(x)\\&-&(x\lprod y)\lprod\a(z)+(y\rprod x)\lprod\a(z)+\a(z)\rprod(x\lprod y)-\a(z)\rprod(y\rprod x)\\& \overset{\ref{defdiass}}{=}&0.
     \end{eqnarray*}
     Thus, the result follows.
\end{proof}
Moreover, we have the following commuting diagram.

\begin{equation*}
\xymatrix{\text{Hom-TriLeib}\ar@{<-}[d]_{(\{\c,\c\},\a)\to(\{\c,\c\},0,\a)} & &\text{Hom-Triass} \ar@{->}_{\text{Thm}\ \ref{triasstotriLeib}}[ll]\ar@{<-}_{(\vdash,\dashv,\a)\to(\vdash,\dashv,0,\a)}[d] \\
\text{Hom-Leib} & & \text{Hom-Diass} \ar@{->}_{\text{Prop}\ \ref{asstoLeibniz} }[ll] }
\end{equation*}
\begin{ex}
It follows from Example \ref{extriass} and Theorem \ref{triasstotriLeib} that,   if $\{e_1,e_2\}$ is a basis of $2$-dimensional vector space $A$ over $\mathbb K$, then the following brackets define a Hom-Leibniz trialgebra structure over $\mathbb K$
$$\{e_1,e_1\}=ae_1,\quad [e_1,e_2]=be_2,$$
where $a,b$ and $c$ are parameters in $\mathbb{K}$.
\end{ex}
\begin{thm}
    Let $\mathcal{H}:V\to A$ be a homomorphic relative averaging operator on the \textsf{HLie-Act} pair $(A,V)$. Then $(V,\{\c,\c\}_\mathcal{H},[\c,\c]_V,\b)$ is a Hom-triLeibniz algebra, where
    $$\{u, v\}_\mathcal{H}=\rho(\mathcal{H}u)v,$$
\end{thm}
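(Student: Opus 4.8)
The plan is to check the four requirements in the definition of a (left) Hom-Leibniz trialgebra for the quadruple $(V,\{\cdot,\cdot\}_\mathcal{H},[\cdot,\cdot]_V,\beta)$: that $(V,\{\cdot,\cdot\}_\mathcal{H},\beta)$ is a Hom-Leibniz algebra, that $(V,[\cdot,\cdot]_V,\beta)$ is a Hom-Lie algebra, and that the two mixed identities \eqref{eq:trileib1} and \eqref{eq:trileib2} are satisfied.

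Since a homomorphic relative averaging operator on a \textsf{HLie-Act} pair is in particular a relative averaging operator on the underlying \textsf{HLie-Rep} pair $(A,V)$, Theorem \ref{LietoLeibniz} immediately yields that $(V,\{\cdot,\cdot\}_\mathcal{H},\beta)$ is a Hom-Leibniz algebra with $\{u,v\}_\mathcal{H}=\rho(\mathcal{H}(u))v$. Moreover, by the definition of a \textsf{HLie-Act} pair the vector space $V$ carries, by hypothesis, a multiplicative Hom-Lie algebra structure $(V,[\cdot,\cdot]_V,\beta)$. Hence only the compatibility conditions \eqref{eq:trileib1} and \eqref{eq:trileib2} remain to be verified.

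To establish \eqref{eq:trileib1}, I would begin with $\{\beta(u),[v,w]_V\}_\mathcal{H}=\rho(\mathcal{H}\beta(u))[v,w]_V$ and use $\mathcal{H}\beta=\alpha\mathcal{H}$ to rewrite it as $\rho(\alpha(\mathcal{H}(u)))[v,w]_V$; then applying the action compatibility \eqref{actLie} with $x=\mathcal{H}(u)\in A$ turns this into $[\rho(\mathcal{H}(u))v,\beta(w)]_V+[\beta(v),\rho(\mathcal{H}(u))w]_V$, which is precisely $[\{u,v\}_\mathcal{H},\beta(w)]_V+[\beta(v),\{u,w\}_\mathcal{H}]_V$. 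For \eqref{eq:trileib2}, I would compute $\{[u,v]_V,\beta(w)\}_\mathcal{H}=\rho(\mathcal{H}[u,v]_V)\beta(w)$ and use that $\mathcal{H}$ is a morphism, i.e. $\mathcal{H}[u,v]_V=[\mathcal{H}(u),\mathcal{H}(v)]$, to get $\rho([\mathcal{H}(u),\mathcal{H}(v)])\beta(w)$; on the other hand $\{\{u,v\}_\mathcal{H},\beta(w)\}_\mathcal{H}=\rho(\mathcal{H}(\rho(\mathcal{H}(u))v))\beta(w)$, and the relative averaging identity $[\mathcal{H}(u),\mathcal{H}(v)]=\mathcal{H}(\rho(\mathcal{H}(u))v)$ shows the two sides coincide.

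The argument is short and each step invokes exactly one of the structural hypotheses, so there is no substantial obstacle; the only point to watch is that \eqref{eq:trileib2} genuinely requires the homomorphism property of $\mathcal{H}$ (to identify $\mathcal{H}[u,v]_V$ with $[\mathcal{H}(u),\mathcal{H}(v)]$), and not merely the relative averaging property, whereas \eqref{eq:trileib1} uses only the intertwining relation $\mathcal{H}\beta=\alpha\mathcal{H}$ together with the action axiom \eqref{actLie}.
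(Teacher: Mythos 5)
Your proposal is correct and follows essentially the same route as the paper's proof: it invokes Theorem \ref{LietoLeibniz} for the Hom-Leibniz structure on $(V,\{\cdot,\cdot\}_\mathcal{H},\beta)$, takes the Hom-Lie structure $(V,[\cdot,\cdot]_V,\beta)$ from the action hypothesis, proves \eqref{eq:trileib1} using $\mathcal{H}\beta=\alpha\mathcal{H}$ together with \eqref{actLie}, and proves \eqref{eq:trileib2} by combining the morphism property $\mathcal{H}[u,v]_V=[\mathcal{H}(u),\mathcal{H}(v)]$ with the relative averaging identity, exactly as in the paper.
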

\begin{proof}
    We have that $(V,[\c,\c]_V,\b)$ is a Hom-Lie algebra, and it follows from Theorem \ref{LietoLeibniz} that $(V,\{\c,\c\}_\mathcal{H},\b)$ is a Hom-Leibniz algebra. 
    For any $u,v,w\in V$, we have
    \begin{eqnarray*}
        \{\b(u),[v,w]_V\}_\mathcal{H}&=&\rho(\mathcal{H}(\b(u)))([v,w]_V)\\&=&\rho(\a(\mathcal{H}(u)))([v,w]_V)\\& \overset{\eqref{actLie}}{=}&[\rho(\mathcal{H}(u))v,\b(w)]_V+[\b(v),\rho(\mathcal{H}(u))w]_V\\&=&[\{u,v\}_\mathcal{H},\b(w)]_V+[\b(v),\{v,w\}_\mathcal{H}]_V.
    \end{eqnarray*}
    On the other hand, we have
    \begin{eqnarray*}
        \{[u,v]_V,\b(w)\}_{\mathcal{H}}&=&\rho(\mathcal{H}([u,v]_V))(\b(w))\\&=&\rho([\mathcal{H}u,\mathcal{H}v])(\b(w))\\&=&\rho(\mathcal{H}(\rho(\mathcal{H}u)v))(\b(w))\\&=&\{\{u,v\}_\mathcal{H},\b(w)\}_\mathcal{H}
    \end{eqnarray*}
    Then,  the Eqs \eqref{eq:trileib1} and \eqref{eq:trileib2} are satisfied. Thus, the result follows.
\end{proof}
\subsection{Construction of Hom-Jordan trialgebras}
In the following, we introduce the notion of an action of Hom-Jordan algebra on a vector space $V$.
\begin{defi}
An  action  of a Hom-Jordan algebra $(A,\o,\a)$ on a Hom-Jordan algebra  $(V,\ast, \beta)$
is a representation  $\r: A \to gl(V)$ of $A$ such that
 for any $x,y \in A$ $u,v,w\in V$
 {\small\begin{align}
& \pi(\alpha(x))(\b(v))\ast(\b(v)\ast \b(v)) = (\pi(\alpha( x))(v\ast v))\ast \b^2(v),\label{ActHJ1}
\\
 &\nonumber (\pi(  x)(u)\ast \beta(w))\ast \beta^2(v)
+ (\pi(x)(v)\ast \b(w))\ast  \b^2(u)
+ \pi(\a^2(x))((u\ast v)\ast \b(w))   \\
 =& (\pi(\a(x))(u\ast v))\ast \b^2(w)
 + (\pi(\a(x))(u\ast w))\ast \b^2(v)
 + (\pi(\a(x))(v\ast w))\ast \b^2(u) ,
\\
  &(\pi(\a(y))(\pi(x)u))\ast\b^2(v)+ (\pi(\a(y))(\pi(x)v))\ast\b^2(u)+\pi(\a^2(x))(\pi(\a(y))(u\ast v))\nonumber  \\
 = &(\pi(x\o y)(\b(v)))\ast\b^2(u)+\pi(\a^2(x))(\pi(y)u\ast\b(v))+\pi(\a^2(y))(\pi(x)u\ast\b(v)) ,
\\&\nonumber
\pi(\a(y)u)\ast \pi(\a(x)v)+\pi(\a(x)u)\ast\pi(\a(y))v+\pi(\a(x)\o\a(y))(\b(u)\ast\b(v))   \\
 =& (\pi(x\o y)(\b(v)))\ast\b^2(u)+\pi(\a^2(x))(\pi(y)u\ast\b(v))+\pi(\a^2(y))(\pi(x)u\ast\b(v)).
\end{align}}
We denote it by $(V,\pi,\beta,\ast)$.
 \end{defi}
A \textsf{HJor-Act} pair  is a Hom-Jordan algebra $(A,\o,\a)$ with an action $(V,\pi,\b,\ast)$
and refer to it with the pair $(A,V)$.
 \begin{pro}
     Note that a tuple $(V,\pi,\b,\ast)$ is an action of a Hom-Jordan algebra $(A,\o,\a)$ if and only $A\oplus V$ carries a Hom-Jordan algebra structure with product and twisting map given by
     \begin{align*}
&(x + u) \o_{A \oplus V}  (y + v )=  x \o  y + \pi(x)v + \pi(y )u +u \ast  v, \\
&(\alpha+\b)(x+u)=\alpha(x)+\b(u)
\end{align*}
 for all  $x,y \in A$ and $ u,v \in B,$ which  is called the semi-direct product of $A$ with $V$.
 \end{pro}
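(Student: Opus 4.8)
The plan is to verify directly that $(A\oplus V,\o_{A\oplus V},\alpha+\beta)$ satisfies the defining axioms of a Hom-Jordan algebra (commutativity, multiplicativity of the twist, and identity \eqref{HOm-Jor id}), reading the resulting computation in both directions. First I would dispose of the cheap parts. Commutativity of $\o_{A\oplus V}$ is immediate from commutativity of $\o$ on $A$ and of $\ast$ on $V$ together with the symmetry of the cross term $\pi(x)v+\pi(y)u$; this needs no hypothesis on $\pi$. Likewise, expanding $(\alpha+\beta)\big((x+u)\o_{A\oplus V}(y+v)\big)$ and $(\alpha+\beta)(x+u)\o_{A\oplus V}(\alpha+\beta)(y+v)$ and comparing, one finds the equality reduces to multiplicativity of $\alpha$ on $A$, of $\beta$ on $V$, and the intertwining relation $\beta\pi(x)=\pi(\alpha(x))\beta$ of \eqref{representation} — all part of the standing assumptions. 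So the twist is automatically a morphism of the candidate structure, and the only substantive point is identity \eqref{HOm-Jor id}.

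Next, for $X=x+u$ and $Y=y+v$ I would expand both sides of \eqref{HOm-Jor id},
$$(\alpha+\beta)(X^2)\o_{A\oplus V}\big((\alpha+\beta)(Y)\o_{A\oplus V}(\alpha+\beta)(X)\big)=\big(X^2\o_{A\oplus V}(\alpha+\beta)(Y)\big)\o_{A\oplus V}(\alpha+\beta)^2(X),$$
using $X^2=x^2+2\pi(x)u+u\ast u$. Projecting onto $A$, the identity becomes exactly \eqref{HOm-Jor id} for $(A,\o,\a)$, which holds. Projecting onto $V$, the difference of the two sides is a sum of terms that I would sort according to how many of the vector-space entries $u,v$ occur: the terms with a single $V$-entry assemble into the module identities \eqref{RepHomJor1}--\eqref{RepHomJor2} for $\pi$; the terms in which all entries are from $V$ assemble into \eqref{HOm-Jor id} for $(V,\ast,\beta)$; and the remaining mixed terms (one $\pi$ with two $\ast$'s, and two $\pi$'s with one $\ast$) assemble, after polarizing in $u$ and $v$ and using $\mathrm{char}\,\K=0$, into precisely the four action axioms \eqref{ActHJ1}ff. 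Hence the $V$-projection of \eqref{HOm-Jor id} for $A\oplus V$ holds if and only if those four axioms hold, the other contributions being automatic from the hypotheses that $(A,\o,\a)$ and $(V,\ast,\beta)$ are Hom-Jordan and that $\pi$ is a representation.

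The converse direction is the same computation read backwards: if $A\oplus V$ is a Hom-Jordan algebra, restricting to the subalgebra $A$ and to the ideal $V$ recovers that $A$ and $V$ are Hom-Jordan and that $\beta$ intertwines $\pi$; then substituting into \eqref{HOm-Jor id} elements with a controlled number of $V$-components and extracting the $V$-component — together with polarization — yields the representation identities for $\pi$ and afterwards the action axioms one by one.

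I expect the main obstacle to be purely organisational. Expanding the quartic identity \eqref{HOm-Jor id} on the direct sum produces a large number of terms, and the delicate point is to carry out the polarization of $\alpha(x^2)\o(\alpha(y)\o\alpha(x))=(x^2\o\alpha(y))\o\alpha^2(x)$ (linear in $y$, of degree three in $x$) and to check that the mixed $\pi$--$\ast$ terms group exactly into \eqref{ActHJ1} and its three companions with the correct coefficients, leaving no residual constraint. Once the bookkeeping is set up — most cleanly by treating the four polarization slots symmetrically and tabulating each monomial's contribution by its number of $V$-entries — the verification is mechanical and follows the pattern of the analogous \textsf{HAss-Act} lemma stated earlier.
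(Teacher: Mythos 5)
The paper states this proposition without proof, so there is no argument of its own to compare with; your direct verification — commutativity and multiplicativity of $\alpha+\beta$ first, then expanding the Hom-Jordan identity on $A\oplus V$, reading the $A$-component as the identity for $(A,\o,\a)$ and sorting the $V$-component by the number of $V$-entries so that the linear block matches \eqref{RepHomJor1}--\eqref{RepHomJor2}, the mixed blocks match \eqref{ActHJ1} and its three companions, and the pure-$V$ block is the Hom-Jordan identity for $(V,\ast,\b)$, with polarization in characteristic $0$ (in the cubic $x$-slot as well as in $u,v$) giving the converse — is precisely the routine check the paper intends, and it mirrors the expansions the paper does write out for the hemisemi-direct product Hom-Jordan dialgebra and trialgebra. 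The only unexecuted part is the bookkeeping that the quadratic and cubic mixed terms reproduce the four action axioms with their exact $\alpha$- and $\b$-placements, which you correctly single out as the sole substantive step.
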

In \cite{trijordan}, the authors introduce the notion of (right) Jordan trialgebra which is triple $(A,\o,\bullet)$ where $(A,\o)$ is a Jordan algebra and $(A,\bullet)$ is a Jordan dialgebra,
satisfying $4$ polynomial identities. Next, we give the definition of left Hom-Jordan trialgebra as a generalization of left Jordan dialgebras.
\emptycomment{
\begin{equation}
(x_1\o x_1)\o(x_1\bullet x_2) = ((x_1\o x_1)\bullet x_2)\o x_1. \label{JTL0}
\end{equation}
\begin{multline}
  ((x_1\bullet  x_4)\o x_3)\o x_2
+ ((x_2\bullet x_4)\o x_3)\o  x_1
+ ((x_1\o x_2)\o x_3)\bullet  x_4   \\
 = ((x_1\o  x_2)\bullet x_4)\o  x_3
 + ((x_1\o  x_3)\bullet x_4)\o  x_2
 + ((x_2\o  x_3)\bullet x_4)\o  x_1 .\label{JTL1}
\end{multline}
\begin{multline}
  ((x_1\bullet  x_4)\bullet x_3)\o x_2
+ ((x_2\bullet x_4)\bullet x_3)\o  x_1
+ ((x_1\o x_2)\bullet x_3)\bullet  x_4   \\
 = (x_2\bullet  (x_3\bullet x_4))\o  x_1
 + ((x_1\bullet  x_3)\o x_2)\bullet  x_4
 + ((x_1\bullet  x_4)\o x_2)\bullet  x_3 .\label{JTL2}
\end{multline}
\begin{multline}
 ( x_1\bullet x_3)\o (x_2\bullet x_4)
+ (x_1\bullet x_4)\o(  x_2\bullet x_3)
+ (x_1\o x_2)\bullet (x_3\bullet x_4)   \\
 = (x_2\bullet ( x_3\bullet x_4))\o x_1
 + ((x_1\bullet x_3)\o x_2) \bullet x_4
 + ((x_1\bullet x_4)\o x_2)\bullet x_3 .\label{JTL3}
\end{multline}}
\begin{defi}
  A   \emph{Hom-Jordan trialgebra} is a vector  space $A$ with two bilinear operations $\o,\bullet:A\times A \to A$   and  a linear map $\alpha:A\to A$, where $(A,\o,\a)$ is a Hom-Jordan algebra and $(A,\bullet,\a)$ is a Hom-Jordan dialgebra,
satisfying the following   identities:
{\small\begin{align}
&(\alpha(x_1)\o \alpha(x_1))\o( \alpha(x_2)\bullet\alpha(x_1)) = (\alpha( x_2)\bullet(x_1\o x_1))\o \alpha^2(x_1)\label{HJTL0}
\\
 &\nonumber\\
 &\nonumber ((x_4\bullet  x_1)\o \alpha(x_3))\o \alpha^2(x_2)
+ ((x_4\bullet x_2)\o \alpha(x_3))\o  \alpha^2(x_1)
+\alpha^2(x_4)\bullet  ((x_1\o x_2)\o \alpha(x_3))    \\
 =& (\alpha(x_4)\bullet(x_1\o  x_2) )\o \alpha^2( x_3)
 + (\alpha(x_4)\bullet(x_1\o  x_3))\o  \alpha^2(x_2)
 + (\alpha(x_4)\bullet(x_2\o  x_3))\o  \alpha^2(x_1) \label{HJTL1}\\
 &\nonumber
\\
  &(\alpha(x_3)\bullet(x_4\bullet  x_1) )\o \alpha^2(x_2)
+ (\alpha(x_3)\bullet(x_4\bullet x_2))\o \alpha^2( x_1)
+ \alpha^2(  x_4)\bullet(\alpha(x_3)\bullet(x_1\o x_2))) \nonumber  \\
 = &(  (x_4\bullet x_3)\bullet\alpha(x_2))\o \alpha^2( x_1)
 +\alpha^2(x_4)\bullet ((x_3\bullet  x_1)\o \alpha(x_2))
 +\alpha^2(x_3)\bullet ((x_4\bullet  x_1)\o \alpha(x_2))   \label{HJTL2}\\
 &\nonumber
\\&\nonumber
 (\alpha( x_3)\bullet\alpha( x_1))\o (\alpha(x_4)\bullet \alpha(x_2))
+ (\a(x_4)\bullet \a(x_1))\o(  \a(x_3)\bullet \a(x_2))
+  (\a(x_4)\bullet \a(x_3)) \bullet(\a(x_1)\o \a(x_2))  \\
 =& (( x_4\bullet x_3)\bullet\alpha(x_2))\o \alpha^2(x_1)
 + \a^2(x_4)\bullet((x_3\bullet x_1)\o \a(x_2))
 +\a^2(x_3)\bullet ((x_4\bullet x_1)\o \a(x_2))  .\label{HJTL3}
\end{align}}

When the twisting map $\alpha$ is the identity map, we recover the classical notion of Jordan trialgebra.
\end{defi}

\begin{defi} Let $(A,V)$ be a \textsf{HJor-Act} pair.
    A linear map $\mathcal H: V \to A$ is called a homomorphic relative averaging operator on $A$ with respect to $(V,\pi,\beta,\ast)$ if it's a relative averaging operator and a morphism, that is
      \begin{align}
  & \mathcal{H}(u\ast v)=\mathcal H(u)\o \mathcal H(v),\ \forall u,v \in V.
      \end{align}
\end{defi}
\begin{pro}
Let $(A,V)$ be a  \textsf{HJor-Act} pair. Then
  $(A\oplus V,\o_{A\oplus V},\bullet_{A\oplus V},\ \alpha_{A\oplus V})$ is Hom-Jordan trialgebra, called hemisemi-direct product Hom-Jordan trialgebra and denoted by $A\oplus_{TriJor} V$, where
  \begin{align*}
      (x+u)\o_{A\oplus V}(y+v)&=x\o y+u\ast v,\\ (x+u)\bullet_{A\oplus V}(y+v)&=x\o y+\pi(x)v.
  \end{align*}
\end{pro}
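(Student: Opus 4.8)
The plan is to verify, for $(A\oplus V,\o_{A\oplus V},\bullet_{A\oplus V},\alpha_{A\oplus V})$, the three ingredients required by the definition of a Hom-Jordan trialgebra: that $(A\oplus V,\o_{A\oplus V},\alpha_{A\oplus V})$ is a Hom-Jordan algebra, that $(A\oplus V,\bullet_{A\oplus V},\alpha_{A\oplus V})$ is a Hom-Jordan dialgebra, and that the four compatibility identities \eqref{HJTL0}--\eqref{HJTL3} hold. The first ingredient is immediate: since $\o_{A\oplus V}$ has no mixed terms, $(A\oplus V,\o_{A\oplus V},\alpha_{A\oplus V})$ is nothing but the direct product of the Hom-Jordan algebras $(A,\o,\a)$ and $(V,\ast,\b)$ (the second being part of the \textsf{HJor-Act} datum), and a direct product of Hom-Jordan algebras is again a Hom-Jordan algebra, by checking commutativity and the Hom-Jordan identity \eqref{HOm-Jor id} componentwise together with multiplicativity of $\a$ and $\b$. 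The second ingredient also reduces to something already proved: $(A\oplus V,\bullet_{A\oplus V},\alpha_{A\oplus V})$ coincides with the hemisemi-direct product Hom-Jordan dialgebra $A\oplus_{DiJor}V$ attached to the \textsf{HJor-Rep} pair $(A,V)$ underlying the action (the map $\pi$ is a representation of $(A,\o,\a)$ by the definition of an action), hence it is a Hom-Jordan dialgebra by the proposition establishing that $A\oplus_{DiJor}V$ is one.

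It remains to check \eqref{HJTL0}--\eqref{HJTL3}. I would substitute generic elements $x_i+u_i$ ($x_i\in A$, $u_i\in V$) into each identity. The decisive observation is that for both $\o_{A\oplus V}$ and $\bullet_{A\oplus V}$ the $A$-component of a product depends only on the $A$-components of the factors and equals their $\o$-product, while the $V$-component of any composite is assembled only from the $u_i$, from $\pi(\,\cdot\,)$ evaluated at $\o$-products of the $x_i$, from $\ast$ and from $\b$; consequently every term of \eqref{HJTL0}--\eqref{HJTL3} contributes a single $V$-monomial, and each identity splits into an $A$-component and a $V$-component that are verified separately. The $A$-component is obtained by replacing every $\bullet$ with $\o$: for \eqref{HJTL0} it is exactly the Hom-Jordan identity of $(A,\o,\a)$ (after using commutativity and $\a(x^{2})=\a(x)\o\a(x)$), and for \eqref{HJTL1}--\eqref{HJTL3} it is a multilinear consequence of \eqref{HOm-Jor id}, hence valid in $(A,\o,\a)$. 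The $V$-component of \eqref{HJTL0} is, after reordering $\ast$-factors by commutativity, the action axiom \eqref{ActHJ1} for $(V,\pi,\b,\ast)$; and the $V$-components of \eqref{HJTL1}, \eqref{HJTL2} and \eqref{HJTL3} are, respectively, the second, third and fourth of the defining identities of an action in the definition of a \textsf{HJor-Act} pair.

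The real content -- and the only genuine obstacle -- is the bookkeeping in this last step. After substitution, each term of \eqref{HJTL1}--\eqref{HJTL3} produces a single $V$-monomial in which the $A$-entries are threaded through $\pi$ and $\b$ is applied at depths $0,1,2$; one must collect the six resulting monomials on each side of a given identity and recognise them as the two sides of the matching action axiom. This recognition needs only commutativity of $\ast$, the relation $\b\,\pi(x)=\pi(\a(x))\,\b$ from \eqref{representation}, and multiplicativity of $\b$. Since the four \textsf{HJor-Act} identities were written precisely in this normal form, each of \eqref{HJTL0}--\eqref{HJTL3} follows by a term-by-term comparison. Concretely I would carry out \eqref{HJTL1} in full -- substituting $x_i+u_i$, expanding both sides, matching the $V$-part with the second action axiom under $x\mapsto x_4$, $(u,v,w)\mapsto(u_1,u_2,u_3)$, and the $A$-part with the corresponding identity in $(A,\o,\a)$ -- and then remark that \eqref{HJTL0}, \eqref{HJTL2} and \eqref{HJTL3} are treated in exactly the same way, using \eqref{ActHJ1}, the third and the fourth action identities. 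This establishes that $A\oplus_{TriJor}V$ is a Hom-Jordan trialgebra.
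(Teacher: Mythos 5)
Your proposal is correct and takes essentially the same route as the paper: expand in the hemisemi-direct product, split each identity into its $A$- and $V$-components, and match the $V$-component with the corresponding action axiom (\eqref{ActHJ1} for \eqref{HJTL0}, and the remaining action identities for \eqref{HJTL1}--\eqref{HJTL3}), the $A$-component being a (linearized) instance of the Hom-Jordan identity. You are in fact somewhat more complete than the paper's proof, which only verifies \eqref{HJTL0} explicitly and does not spell out that $(A\oplus V,\o_{A\oplus V},\alpha_{A\oplus V})$ is a Hom-Jordan algebra (your direct-product observation) nor that $(A\oplus V,\bullet_{A\oplus V},\alpha_{A\oplus V})$ is the Hom-Jordan dialgebra $A\oplus_{DiJor}V$ already constructed earlier.
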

\begin{proof} For any $x_1,x_2\in A$ and $u_1,u_2\in V$ we have
\begin{align*}
&\big(\alpha_{A\oplus V}(x_1+u_1)\o_{A\oplus V} \alpha_{A\oplus V}(x_1+u_1)\big)\o_{A\oplus V}\big( \alpha_{A\oplus V}(x_2+u_2)\bullet_{A\oplus V}\alpha_{A\oplus V}(x_1+u_1)\big)\\=& \big((\a(x_1)\o\a(x_1)+\b(u_1)\ast \b(u_1)\big)\o_{A\oplus V}\big(\a(x_2)\bullet\a(x_1)+\pi(x_2)\b(u_1)\big)\\=&(\a(x_1)\o\a(x_1))\o(\a(x_2)\bullet\a(x_1))+(\b(u_1)\ast \b(u_1))\ast \pi(\a(x_2))\b(u_1)\\=&(\a(x_1)\o\a(x_1))\o(\a(x_2)\bullet\a(x_1))+\pi(\a(x_2))\b(u_1)\ast(\b(u_1)\ast \b(u_1))
\end{align*}
On the other hand, we have
\begin{align*}
   &\big(\alpha_{A\oplus V}( x_2+u_2)\bullet_{A\oplus V}((x_1+u_1)\o_{A\oplus V}(x_1+u_1))\big)\o_{A\oplus V}\alpha_{A\oplus V}^2(x_1+u_1)\\=&\big(\a(x_2)\bullet(x_1\o x_1)+\pi(\a(x_2))(\b(u_1)\ast \b(u_1))\big)\o_{A\oplus V}(\a^2(x_1)+\b^2(u_1))\\=&\big(\a(x_2)\bullet(x_1\o x_1)\big)\o\a^2(x_1)+\big(\pi(\a(x_2))(\b(u_1)\ast\b(u_1))\big)\ast \b^2(u_1)
\end{align*}
Thus it follows from Eq. \eqref{ActHJ1}, that the Eq. \eqref{HJTL0} is satisfied. Analogously, by a same calculation, one can show that the Eqs. \eqref{HJTL1}, \eqref{HJTL2} and \eqref{HJTL3} are satisfied.
\end{proof}
In the following result, we characterize homomorphic relative averaging operator on Hom-Jordan algebras by their graphs.
\begin{thm}
    Let $(A,V)$ be a \textsf{HJor-Act} pair, a linear map $\mathcal{H}:V\to A$ is a homomorphic relative averaging operator on the Hom-Jordan algebra $(A,\o,\alpha)$ if and only if  the graph $Gr(\mathcal{H}) = \{\mathcal{H}u+u\ |\ u \in V\}$ is a Hom-Jordan subtrialgebra of the hemisemi-direct product Hom-Jordan trialgebra algebra $A\oplus_{TriJor} V$.
\end{thm}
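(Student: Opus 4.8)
The plan is to imitate the proof of Theorem~\ref{graphHass} (and its Hom-Lie analogue Theorem~\ref{graphHLie}), simply computing the two products of $A\oplus_{TriJor}V$ on elements of the graph and reading off when the graph is closed. First I would record that for all $u,v\in V$,
\[
(\mathcal{H}u+u)\o_{A\oplus V}(\mathcal{H}v+v)=\mathcal{H}u\o\mathcal{H}v+u\ast v,\qquad
(\mathcal{H}u+u)\bullet_{A\oplus V}(\mathcal{H}v+v)=\mathcal{H}u\o\mathcal{H}v+\pi(\mathcal{H}u)v,
\]
and that $\alpha_{A\oplus V}(\mathcal{H}u+u)=\alpha(\mathcal{H}u)+\beta(u)$. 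Since the assignment $u\mapsto \mathcal{H}u+u$ is injective with image exactly $Gr(\mathcal{H})$, an element $a+w\in A\oplus V$ belongs to $Gr(\mathcal{H})$ if and only if $a=\mathcal{H}(w)$; this is the observation that lets one pass from ambient products to scalar identities.

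Next I would translate ``$Gr(\mathcal{H})$ is a Hom-Jordan subtrialgebra'' into three requirements: that $Gr(\mathcal{H})$ be stable under $\alpha_{A\oplus V}$, closed under $\o_{A\oplus V}$, and closed under $\bullet_{A\oplus V}$. By the remark above, stability under $\alpha_{A\oplus V}$ holds iff $\alpha(\mathcal{H}u)=\mathcal{H}(\beta(u))$ for all $u$, i.e.\ $\mathcal{H}\beta=\alpha\mathcal{H}$; closure under $\o_{A\oplus V}$ holds iff $\mathcal{H}u\o\mathcal{H}v=\mathcal{H}(u\ast v)$ for all $u,v$, which is precisely the morphism condition; and closure under $\bullet_{A\oplus V}$ holds iff $\mathcal{H}u\o\mathcal{H}v=\mathcal{H}(\pi(\mathcal{H}u)v)$, which is precisely the defining identity of a relative averaging operator on the underlying \textsf{HJor-Rep} pair. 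Conjoining these three conditions gives exactly the definition of a homomorphic relative averaging operator on $(A,V)$, and conversely every such operator makes all three hold; hence the equivalence.

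I do not expect any genuine obstacle here: the Hom-Jordan trialgebra axioms for the ambient $A\oplus_{TriJor}V$ are already established in the preceding proposition, so a subspace closed under $\o_{A\oplus V}$ and $\bullet_{A\oplus V}$ and stable under $\alpha_{A\oplus V}$ is automatically a Hom-Jordan subtrialgebra, with nothing further to verify. The only mild point to be careful about is precisely the injectivity of $u\mapsto\mathcal{H}u+u$, which is what reduces the closure statements to the three identities above; the rest is bookkeeping entirely parallel to Theorems~\ref{graphHass} and~\ref{graphHLie}, so the proof can reasonably be stated as straightforward.
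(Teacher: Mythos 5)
Your proposal is correct and follows essentially the same route as the paper: compute $\bullet_{A\oplus V}$ and $\o_{A\oplus V}$ on pairs $\mathcal{H}u+u$, use that $a+w\in Gr(\mathcal{H})$ iff $a=\mathcal{H}(w)$, and read off that closure is equivalent to $\mathcal{H}(u)\o\mathcal{H}(v)=\mathcal{H}(\pi(\mathcal{H}u)v)=\mathcal{H}(u\ast v)$. Your explicit treatment of stability under $\alpha_{A\oplus V}$ (giving $\mathcal{H}\beta=\alpha\mathcal{H}$) is a small point the paper leaves implicit, but it is not a different argument.
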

\begin{proof}
Let $\mathcal{H}:V\to A$ be a linear map. For any $u,v\in V$ we have
\begin{eqnarray*}
    (\mathcal{H}(u)+u)\bullet_{A\oplus V}(\mathcal{H}(v)+v)&=&\mathcal{H}(u)\o\mathcal{H}(v)+\pi(\mathcal{H}(u))v,\\
(\mathcal{H}(u)+u)\o_{A\oplus V}(\mathcal{H}(v)+v)&=&\mathcal{H}(u)\o\mathcal{H}(v)+u\ast v.
\end{eqnarray*}
Therefore, the graph $Gr({\mathcal H}) = \{{\mathcal H}u+u\ |\ u \in V\}$ is a  subalgebra of the hemisemi-direct product $A\oplus_{TriJor} V$ if and only if $\mathcal{H}$ satisfies $\mathcal{H}(u)\o \mathcal{H}(v)=\mathcal{H}(\pi(\mathcal{H}u)v)=\mathcal{H}(u\ast v)$, wich implies that $\mathcal{H}$ is a homomorphic relative averaging operator on \textsf{HJor-Act} pair $(A,V)$.
\end{proof}
\begin{pro}
    Let $(A,V)$ be a \textsf{HJor-Act} pair. A linear map $\mathcal H: V \to A$ is a homomorphic relative averaging operator on $V$ over $A$ if and only if the map
$$N_{\mathcal H}: (A \oplus V) \to (A \oplus V),\ x+u \mapsto \mathcal H(u)$$
is a Nijenhuis operator on the  hemisemi-direct product $A \oplus_{TriJor} V$.
\end{pro}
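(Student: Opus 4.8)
The plan is to establish this equivalence exactly as its graph counterpart, and as the analogous Nijenhuis propositions for $A\oplus_{Diass} V$, $A\oplus_{DiJor} V$, $A\oplus_{Triass} V$ and $A\oplus_{Leib} V$ were established: unfold the two conditions that make $N_{\mathcal H}$ a Nijenhuis operator on the Hom-Jordan trialgebra $A\oplus_{TriJor} V$ --- namely $N_{\mathcal H}\,\alpha_{A\oplus V}=\alpha_{A\oplus V}\,N_{\mathcal H}$ together with the Nijenhuis identity for each of the two products $\bullet_{A\oplus V}$ and $\o_{A\oplus V}$ --- and check that together they say precisely that $\mathcal H$ satisfies the three defining relations of a homomorphic relative averaging operator on the \textsf{HJor-Act} pair $(A,V)$: $\mathcal H\beta=\alpha\mathcal H$, the relative averaging relation $\mathcal H(u)\o\mathcal H(v)=\mathcal H(\pi(\mathcal H(u))v)$, and the homomorphism relation $\mathcal H(u)\o\mathcal H(v)=\mathcal H(u\ast v)$.

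The computation rests on one structural observation: $N_{\mathcal H}$ is square-zero. Indeed $N_{\mathcal H}(x+u)=\mathcal H(u)$ has vanishing $V$-component, so $N_{\mathcal H}^{2}(x+u)=N_{\mathcal H}(\mathcal H(u))=\mathcal H(0)=0$. Hence the term $N_{\mathcal H}\big(N_{\mathcal H}\mu(X,Y)\big)$ drops out of the Nijenhuis identity and, for each product $\mu\in\{\bullet_{A\oplus V},\o_{A\oplus V}\}$, the identity becomes $\mu(N_{\mathcal H}X,N_{\mathcal H}Y)=N_{\mathcal H}\mu(N_{\mathcal H}X,Y)+N_{\mathcal H}\mu(X,N_{\mathcal H}Y)$. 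The twist condition is immediate: $N_{\mathcal H}\alpha_{A\oplus V}(x+u)=\mathcal H(\beta(u))$ while $\alpha_{A\oplus V}N_{\mathcal H}(x+u)=\alpha(\mathcal H(u))$, so it is equivalent to $\mathcal H\beta=\alpha\mathcal H$.

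Next I would carry out the componentwise bookkeeping for the two products. Writing $X=x+u$, $Y=y+v$ and using $(x+u)\bullet_{A\oplus V}(y+v)=x\o y+\pi(x)v$, the fact that $N_{\mathcal H}X$ has zero $V$-component gives $\mu(N_{\mathcal H}X,N_{\mathcal H}Y)=\mathcal H(u)\o\mathcal H(v)\in A$, $N_{\mathcal H}\mu(N_{\mathcal H}X,Y)=\mathcal H(\pi(\mathcal H(u))v)$, and $N_{\mathcal H}\mu(X,N_{\mathcal H}Y)=0$ because the $V$-component of $X\bullet_{A\oplus V}N_{\mathcal H}Y$ already vanishes; so the Nijenhuis identity for $\bullet_{A\oplus V}$ is exactly the relative averaging relation. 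Running the same computation for $\o_{A\oplus V}$, where now the $V$-component of $(x+u)\o_{A\oplus V}(y+v)$ is $u\ast v$, is expected to isolate the homomorphism relation $\mathcal H(u)\o\mathcal H(v)=\mathcal H(u\ast v)$. Conjoining the three equivalences proves both implications.

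The only real subtlety --- and the step I would treat with the most care --- is tracking into which summand of $A\oplus V$ each term of the Nijenhuis identities lands: the simplification hinges on the facts that $N_{\mathcal H}$ annihilates $A$ and that a product $\bullet_{A\oplus V}$ or $\o_{A\oplus V}$ having at least one input in $\mathrm{im}\,N_{\mathcal H}\subseteq A$ has trivial $V$-component, so that each Nijenhuis identity degenerates to a single identity in $V$ after applying $\mathcal H$. Getting the $\o_{A\oplus V}$ case to match the homomorphism relation is where this degeneration must be handled most carefully; apart from that, no idea is needed beyond those of Theorem~\ref{graphHass} and the preceding Nijenhuis propositions, and the remaining verification, though longer because two products are involved, is routine.
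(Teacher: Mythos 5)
Your reduction via $N_{\mathcal H}^{2}=0$ and your treatment of the twist condition and of the product $\bullet_{A\oplus V}$ are correct (and match how the dialgebra-level propositions are verified), but the step you yourself single out as delicate --- extracting the homomorphism relation from the Nijenhuis identity for $\circ_{A\oplus V}$ --- actually fails, and it fails precisely because of the simplification you rely on. Write $X=x+u$, $Y=y+v$, and recall $(x+u)\circ_{A\oplus V}(y+v)=x\circ y+u\ast v$. Since $N_{\mathcal H}X=\mathcal H(u)$ has zero $V$-component, both $N_{\mathcal H}X\circ_{A\oplus V}Y=\mathcal H(u)\circ y$ and $X\circ_{A\oplus V}N_{\mathcal H}Y=x\circ\mathcal H(v)$ lie in $A$ and are therefore annihilated by the outer $N_{\mathcal H}$; the only term in which $u\ast v$ ever appears is $N_{\mathcal H}\bigl(X\circ_{A\oplus V}Y\bigr)=\mathcal H(u\ast v)$, and this sits \emph{inside} the outer $N_{\mathcal H}$, so it is killed by exactly the square-zero observation you use to discard it. Hence the Nijenhuis identity for $\circ_{A\oplus V}$ degenerates to $\mathcal H(u)\circ\mathcal H(v)=0$ for all $u,v\in V$, not to $\mathcal H(u)\circ\mathcal H(v)=\mathcal H(u\ast v)$. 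So your argument establishes ``Nijenhuis for $\bullet_{A\oplus V}$ $\Leftrightarrow$ relative averaging,'' but the remaining half of the claimed equivalence does not come out: a homomorphic relative averaging operator need not satisfy $\mathcal H(u)\circ\mathcal H(v)=0$ (for the forward implication), and conversely Nijenhuis for both products gives relative averaging together with $\mathcal H(u)\circ\mathcal H(v)=0$, which forces $\mathcal H(\pi(\mathcal H(u))v)=0$ and in no way yields $\mathcal H(u\ast v)=\mathcal H(u)\circ\mathcal H(v)$.

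The gap is not one of bookkeeping: with the unweighted Nijenhuis identity of Section \ref{Duplication} applied operation by operation, the datum $u\ast v$ can never survive, because its only occurrence is inside $N_{\mathcal H}(N_{\mathcal H}(\cdot))$. To see the homomorphism condition you would need a genuinely different ingredient --- for instance a weighted or modified Nijenhuis-type identity for the second operation in which the term $N_{\mathcal H}(X\circ_{A\oplus V}Y)$ occurs \emph{outside} the outer $N_{\mathcal H}$ (compare the weight $-1$ $\mathcal{O}$-operator of Lemma \ref{ooperator}), or a different deformation map altogether. Be aware also that the paper states this proposition (and its analogues for $A\oplus_{Triass}V$ and $A\oplus_{TriLeib}V$) without proof, so there is no argument there to compare against or fall back on; the same obstruction affects those statements, whereas the corresponding propositions at the dialgebra level, which your method does handle, are the ones that genuinely go through.
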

\begin{thm}
    Let $\mathcal{H}:V\to A$ be a homomorphic relative averaging operator on the \textsf{HJor-Act} pair $(A,V)$. Then $(V,\bullet_\mathcal{H},\ast,\b)$ is a  Hom-Jordan trialgebra, where
    \begin{equation}
     u\bullet_\mathcal{H} v=\pi(\mathcal{H}(u))v, \quad \forall u,v\in V.
    \end{equation}
\end{thm}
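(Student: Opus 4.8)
The plan is to reduce everything to results already in hand. First, observe that $(V,\ast,\b)$ is a Hom-Jordan algebra, since this is part of the data of the \textsf{HJor-Act} pair $(A,V)$. Second, a homomorphic relative averaging operator on a \textsf{HJor-Act} pair is in particular a relative averaging operator on the underlying \textsf{HJor-Rep} pair $(A,V)$, so Theorem~\ref{JortodiJor} already gives that $(V,\bullet_\mathcal{H},\b)$ is a Hom-Jordan dialgebra, with $u\bullet_\mathcal{H}v=\pi(\mathcal{H}(u))v$. Hence the only content left is the four compatibility identities \eqref{HJTL0}--\eqref{HJTL3} linking $\ast$ and $\bullet_\mathcal{H}$.

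The efficient route is through the graph. Define $\iota\colon V\to A\oplus_{TriJor}V$ by $\iota(u)=\mathcal{H}(u)+u$; it is injective with image $Gr(\mathcal{H})$. The morphism property $\mathcal{H}(u\ast v)=\mathcal{H}(u)\o\mathcal{H}(v)$ yields $\iota(u\ast v)=\mathcal{H}(u)\o\mathcal{H}(v)+u\ast v=\iota(u)\o_{A\oplus V}\iota(v)$; the relative averaging identity $\mathcal{H}(u)\o\mathcal{H}(v)=\mathcal{H}(\pi(\mathcal{H}(u))v)$ yields $\iota(u)\bullet_{A\oplus V}\iota(v)=\mathcal{H}(u)\o\mathcal{H}(v)+\pi(\mathcal{H}(u))v=\mathcal{H}(u\bullet_\mathcal{H}v)+u\bullet_\mathcal{H}v=\iota(u\bullet_\mathcal{H}v)$; and $\mathcal{H}\b=\a\mathcal{H}$ gives $\iota(\b(u))=\alpha_{A\oplus V}(\iota(u))$. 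Thus $\iota$ is an isomorphism of algebras-with-twisting-map from $(V,\bullet_\mathcal{H},\ast,\b)$ onto $Gr(\mathcal{H})$. By the graph characterization theorem established just above, $Gr(\mathcal{H})$ is a Hom-Jordan subtrialgebra of the hemisemi-direct product Hom-Jordan trialgebra $A\oplus_{TriJor}V$, hence a Hom-Jordan trialgebra in its own right; transporting this structure back along $\iota^{-1}$ shows that $(V,\bullet_\mathcal{H},\ast,\b)$ is a Hom-Jordan trialgebra.

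Alternatively one may verify \eqref{HJTL0}--\eqref{HJTL3} by hand: substitute $u\bullet_\mathcal{H}v=\pi(\mathcal{H}(u))v$ throughout, then repeatedly use $\mathcal{H}(u)\o\mathcal{H}(v)=\mathcal{H}(\pi(\mathcal{H}(u))v)$, the morphism property, and $\mathcal{H}\b=\a\mathcal{H}$ to collapse the nested occurrences of $\mathcal{H}$, until each identity reduces to one of the Hom-Jordan action axioms \eqref{ActHJ1} and the three displayed identities following it, exactly as in the proof of Theorem~\ref{JortodiJor} for the dialgebra part. I expect the main obstacle to lie precisely in this second, computational route: the degree-four identities \eqref{HJTL1}--\eqref{HJTL3} expand into many terms, and matching each against the correct action axiom is delicate bookkeeping; the graph argument above is preferable because it bypasses this entirely.
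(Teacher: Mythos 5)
Your argument is correct, but it takes a genuinely different route from the paper's for the part that carries the content. Like you, the paper first invokes Theorem~\ref{JortodiJor} to get the Hom-Jordan dialgebra $(V,\bullet_{\mathcal H},\b)$ (with $(V,\ast,\b)$ Hom-Jordan being part of the action data), but it then proves the compatibility identities by direct computation: it expands \eqref{HJTL0} using $u\bullet_{\mathcal H}v=\pi(\mathcal H(u))v$, commutativity of $\ast$, and $\mathcal H\b=\a\mathcal H$, reduces it to the action axiom \eqref{ActHJ1}, and disposes of \eqref{HJTL1}--\eqref{HJTL3} with a ``similarly'' --- exactly the second, computational route you sketch and rightly flag as bookkeeping-heavy. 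Your preferred graph argument instead transports the structure along $\iota(u)=\mathcal H(u)+u$: the morphism property, the averaging identity, and $\mathcal H\b=\a\mathcal H$ make $\iota$ a bijection onto $Gr(\mathcal H)$ intertwining $\ast$, $\bullet_{\mathcal H}$, $\b$ with $\o_{A\oplus V}$, $\bullet_{A\oplus V}$, $\alpha_{A\oplus V}$, and the graph characterization theorem (already established, with no circularity) makes $Gr(\mathcal H)$ a subtrialgebra of $A\oplus_{TriJor}V$, so the universally quantified trialgebra identities pull back to $V$; this even subsumes the dialgebra step. What this buys is a clean, computation-free proof of \eqref{HJTL0}--\eqref{HJTL3}; what it costs is that the degree-four expansions are not avoided globally but relocated to the proposition that $A\oplus_{TriJor}V$ is a Hom-Jordan trialgebra (which the paper itself verifies only for \eqref{HJTL0}, leaving the rest ``analogously''). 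One detail you handle correctly and should keep explicit: the paper's graph theorem is phrased in terms of closure under the two products only, so your verification that $\iota\circ\b=\alpha_{A\oplus V}\circ\iota$ (equivalently, stability of $Gr(\mathcal H)$ under $\alpha_{A\oplus V}$) is genuinely needed for the restriction and the transport to make sense.
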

\begin{proof}
    It follows from Theorem \ref{JortodiJor} that $(V,\bullet_\mathcal{H},\b)$ is a Hom-Jordan dialgebra. On the other hand, for any $u_1,u_2,u_3,u_4\in V$ we have
    \small\begin{align*}
        &(\b(u_1)\ast \b(u_1))\ast( \b(u_2)\bullet_\mathcal{H}\b(u_1)) - (\b(u_2)\bullet_\mathcal{H}(u_1\ast u_1))\ast \b^2(u_1)\\= &(\b(u_2)\bullet_\mathcal{H} \b(u_1))\ast (\b(u_1)\ast \b(u_1))- (\b(u_2)\bullet_\mathcal{H}(u_1\ast u_1))\ast \b^2(u_1)\\= &(\pi(\mathcal{H}(\b(u_2))\b(u_1))\ast (\b(u_1)\ast \b(u_1))- (\pi(\mathcal{H}(\b(u_2))(u_1\ast u_1))\ast \b^2(u_1)\\= &(\pi(\a(\mathcal{H}(u_2))\b(u_1))\ast (\b(u_1)\ast \b(u_1))- (\pi(\a(\mathcal{H}(u_2))(u_1\ast u_1))\ast \b^2(u_1)\\ \overset{\eqref{ActHJ1}}{=}&0.
    \end{align*}
    Then, the Eq. \eqref{HJTL0} is satisfied. Similarly, one can check the Eqs \eqref{HJTL1}, \eqref{HJTL2} and \eqref{HJTL3}.
\end{proof}
According to Theorem $\ref{diasstodijor}$, we have the following result
\begin{thm}\label{triasstotrijor}

 Let $\mathcal{H}:V\to A$ be a homomorphic relative averaging operator on a \textsf{HAss-Act} pair $(A,V)$. Then $\mathcal{H}$ is a homomorphic relative averaging operator on the associated Hom-Jordan algebra  $(A,\o=\c+\c^{op},\alpha)$, with respect to an action $(V,\pi=l+r,\b,\ast=\c_V+\c_V ^{op})$.

 Moreover, if $(V,\lprod,\rprod,\mprod,\b)$ is the Hom-associative trialgebra associated to the Hom-associative algebra $(A,\c,\alpha)$. Then $(V,\bullet,\ast,\b)$ is a Hom-Jordan trialgebra, where $"\bullet"$ is the anti-dicommutator defined in Eq. \eqref{antidicomm}.
\end{thm}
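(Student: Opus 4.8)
The plan is to deduce this statement from results already in hand, in exactly the spirit of the proof of Theorem~\ref{diasstotrijor}. Two things must be checked: that $\mathcal{H}$ is a homomorphic relative averaging operator on the Hom-Jordan side, and that the operations it induces on $V$ assemble into a Hom-Jordan trialgebra.

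For the first assertion, $(A,\o,\a)$ is a Hom-Jordan algebra by Hom-Jordan-admissibility, recorded earlier. To see that $(V,\pi=l+r,\b,\ast=\c_V+\c_V^{op})$ is an action of $(A,\o,\a)$, I would avoid verifying the four defining identities of a Hom-Jordan action (the first of which is \eqref{ActHJ1}) by hand, and instead argue through the semidirect product. Since $(V,l,r,\b,\c_V)$ is an action of the Hom-associative algebra $(A,\c,\a)$, the associated semidirect product $A\oplus V$ is Hom-associative, hence Hom-Jordan-admissible; a one-line computation of its anti-commutator gives
\[
(x+u)\o_{A\oplus V}(y+v)=x\o y+\pi(x)v+\pi(y)u+u\ast v ,
\]
which is precisely the semidirect product formula attached to the data $(V,\pi,\b,\ast)$. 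By the proposition characterising actions of Hom-Jordan algebras via semidirect products, this forces $(V,\pi,\b,\ast)$ to be an action of $(A,\o,\a)$. The relative averaging identity is then immediate from $\mathcal{H}$ being simultaneously a left and a right relative averaging operator:
\[
\mathcal{H}(u)\o\mathcal{H}(v)=\mathcal{H}(u)\c\mathcal{H}(v)+\mathcal{H}(v)\c\mathcal{H}(u)=\mathcal{H}(l(\mathcal{H}(u))v)+\mathcal{H}(r(\mathcal{H}(u))v)=\mathcal{H}(\pi(\mathcal{H}(u))v),
\]
while the homomorphism property follows from \eqref{avop2}, since $\mathcal{H}(u\ast v)=\mathcal{H}(u\c_V v)+\mathcal{H}(v\c_V u)=\mathcal{H}(u)\c\mathcal{H}(v)+\mathcal{H}(v)\c\mathcal{H}(u)=\mathcal{H}(u)\o\mathcal{H}(v)$; the relation $\mathcal{H}\b=\a\mathcal{H}$ is part of the hypothesis.

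For the ``moreover'' part, Theorem~\ref{triassfromass} equips $V$ with the Hom-associative trialgebra structure $(V,\lprod_{\mathcal H},\rprod_{\mathcal H},\mprod_{\mathcal H},\b)$ with $u\lprod_{\mathcal H} v=l(\mathcal{H}u)v$, $u\rprod_{\mathcal H} v=r(\mathcal{H}v)u$, $u\mprod_{\mathcal H} v=u\c_V v$. By the first part, $\mathcal{H}$ is a homomorphic relative averaging operator on the \textsf{HJor-Act} pair $(A,V)$ with data $(V,\pi,\b,\ast)$, so the theorem proved immediately above (producing a Hom-Jordan trialgebra from a homomorphic relative averaging operator on a \textsf{HJor-Act} pair) yields that $(V,\bullet_{\mathcal H},\ast,\b)$ is a Hom-Jordan trialgebra, where $u\bullet_{\mathcal H} v=\pi(\mathcal{H}(u))v$. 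It then only remains to recognise this operation:
\[
u\bullet_{\mathcal H} v=(l+r)(\mathcal{H}u)v=l(\mathcal{H}u)v+r(\mathcal{H}u)v=u\lprod_{\mathcal H} v+v\rprod_{\mathcal H} u
\]
is exactly the anti-dicommutator \eqref{antidicomm} of the Hom-associative trialgebra $(V,\lprod_{\mathcal H},\rprod_{\mathcal H})$, while $u\ast v=u\c_V v+v\c_V u=u\mprod_{\mathcal H} v+v\mprod_{\mathcal H} u$ is the anti-commutator of its middle product. Hence $(V,\bullet,\ast,\b)$, with $\bullet$ the anti-dicommutator, is the claimed Hom-Jordan trialgebra.

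The only step that is not pure substitution is confirming that the Hom-associative action $(V,l,r,\b,\c_V)$ induces the Hom-Jordan action $(V,l+r,\b,\c_V+\c_V^{op})$; I expect this to be the sole real obstacle, and the semidirect-product device above is precisely what sidesteps the otherwise lengthy direct verification of the Hom-Jordan action axioms. Everything else reduces to Theorem~\ref{triassfromass}, Hom-Jordan-admissibility, and the preceding theorem on homomorphic relative averaging operators over \textsf{HJor-Act} pairs.
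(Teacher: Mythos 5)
Your proposal is correct and follows essentially the same route as the paper, which simply derives this theorem from the computation already done for Theorem~\ref{diasstodijor} together with Theorem~\ref{triassfromass} and the preceding theorem on homomorphic relative averaging operators over \textsf{HJor-Act} pairs. Your semidirect-product argument that $(V,\pi=l+r,\b,\ast=\c_V+\c_V^{op})$ is indeed a Hom-Jordan action neatly supplies a verification the paper leaves implicit, and it is valid since the Hom-associative semidirect product is multiplicative and its anti-commutator has exactly the form required by the characterization of Hom-Jordan actions.
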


More generally, we have the following diagram which summarize and extend the admissibility
of Hom-Lie and Hom-Jordan algebras in the level of Hom-dialgebras and Hom-trialgebras.

\begin{equation*}
\xymatrix{\text{Hom-TriLeib}\ar@{<-}[d]_{(\{\c,\c\},\a)\to(\{\c,\c\},0,\a)} & &\ar@{->}_{\text{Thm}\ \ref{triasstotriLeib}}[ll] \text{Hom-triass}\ar@{<-}[d]_{(\vdash,\dashv,\a)\to(\vdash,\dashv,0,\a)} & &\text{Hom-triJor} \ar@{<-}_{\text{Thm}\ \ref{triasstotrijor}}[ll]\ar@{<-}_{(\bullet,\a)\to(\bullet,0,\a)}[d]\\\text{Hom-Leib}\ar@{<-}[d]_{\text{Thm}\ \ref{LietoLeibniz}} & &\ar@{->}_{\text{Prop}\ \ref{asstoLeibniz}}[ll] \text{Hom-diass}\ar@{<-}[d]_{\text{Thm}\ \ref{asstodiass}} & &\text{Hom-diJor} \ar@{<-}_{\text{Thm}\ \ref{diasstodijor}}[ll]\ar@{<-}_{\text{Thm}\ \ref{JortodiJor}}[d] \\
\text{Hom-Lie} & & \text{Hom-ass} \ar@{->}_{-}[ll] & & \text{Hom-Jordan} \ar@{<-}_{+}[ll]}
\end{equation*}

\end{document}